\newtheorem{thm}{Theorem}[section]
\newtheorem{lemma}[thm]{Lemma}
\newtheorem{prop}[thm]{Proposition}
\newtheorem{cor}[thm]{Corollary}
\theoremstyle{definition}
\newtheorem{remark}[thm]{Remark}
\newtheorem*{defn}{Definition}
\def\R{\mathbb{R}}
\numberwithin{equation}{section}
\title{Least energy positive solutions of critical Schr\"{o}dinger systems with mixed competition and cooperation terms: the higher dimensional case}
\author[H. Tavares]{Hugo Tavares}  \thanks{H.~Tavares is partially supported by the Portuguese government through FCT-Funda\c c\~ao para a Ci\^encia e a Tecnologia, I.P., under the projects UID/MAT/04459/2020, PTDC/MAT-PUR/28686/2017 and PTDC/MAT-PUR/1788/2020.}
\address{Hugo Tavares \newline \indent CAMGSD and Mathematics Department,
\newline \indent Instituto Superior T\'ecnico, Universidade de Lisboa   \newline \indent
Av. Rovisco Pais, 1049-001 Lisboa, Portugal}
\email{hugo.n.tavares@tecnico.ulisboa.pt}
\author[S. You]{Song You}
\author[W.-M. Zou]{Wenming Zou}\thanks{W. M. Zou is supported by NSFC (No. 11771234, No. 12026227).}
\address{Song You and Wenming Zou \newline \indent Department of Mathematical Sciences, \newline \indent Tsinghua University, Beijing 100084, China}
\email{yousong@mail.tsinghua.edu.cn (S. You), zou-wm@mail.tsinghua.edu.cn (W. M. Zou)}
\begin{document}
\date{\today}
\begin{abstract}
Let $\Omega\subset \mathbb{R}^{N}$ be a smooth bounded domain.  In this paper we investigate the existence of least energy positive solutions to the following Schr\"{o}dinger system with $d\geq 2$ equations
\begin{equation*}
-\Delta u_{i}+\lambda_{i}u_{i}=|u_{i}|^{p-2}u_{i}\sum_{j = 1}^{d}\beta_{ij}|u_{j}|^{p} \text{ in } \Omega, \quad u_i=0 \text{ on } \partial \Omega, \qquad i=1,...,d,
\end{equation*}
in the case of a critical exponent  $2p=2^*=\frac{2N}{N-2}$ in high dimensions $N\geq 5$. We treat the focusing case  ($\beta_{ii}>0$ for every $i$)  in the variational setting $\beta_{ij}=\beta_{ji}$ for every $i\neq j$, dealing with a Br\'ezis-Nirenberg type problem: $-\lambda_{1}(\Omega)<\lambda_{i}<0$, where $\lambda_{1}(\Omega)$ is the first eigenvalue of $(-\Delta,H^1_0(\Omega))$. We provide several sufficient conditions on the coefficients $\beta_{ij}$ that ensure the existence of least energy positive solutions; these include the situations of pure cooperation ($\beta_{ij}> 0$ for every $i\neq j$), pure competition ($\beta_{ij}\leq 0$ for every $i\neq j$) and  coexistence of both cooperation and competition coefficients.

In order to provide these results, we firstly establish the existence of nonnegative solutions with $1\leq m\leq d$ nontrivial components by comparing energy levels of the system with those of appropriate sub-systems. Moreover, based on new energy estimates, we obtain the precise asymptotic behaviours of the nonnegative solutions in some situations which include an analysis of a phase separation phenomena.  Some proofs depend heavily on the fact that $1<p<2$, revealing some different phenomena comparing to the special case $N=4$. Our results provide a rather complete picture in the particular situation where the components are divided in two groups.  Besides, based on the results about the phase separation, we prove the existence of least energy sign-changing solution to the Br\'ezis-Nirenberg problem
\[
-\Delta u+\lambda u=\mu |u|^{2^*-2}u,\quad u\in H^1_0(\Omega),
\]
for $\mu>0$, $-\lambda_1(\Omega)<\lambda<0$ for all $N\geq 4$, a result which is new in dimensions $N=4,5$.
\end{abstract}

\subjclass[2020]{35B09, 35B33, 35J50, 35J57.}
\keywords{Critical Schr\"{o}dinger system; mixed cooperation and competition; higher dimension; least energy positive solutions; asymptotic behaviors; phase separation.}

\maketitle
\section{\bf Introduction}

Consider the following nonlinear elliptic system with $d\geq 2$ equations:
\begin{equation}\label{S-system}
\begin{cases}
-\Delta u_{i}+\lambda_{i}u_{i}=u_{i}^{p-1}\sum_{j = 1}^{d}\beta_{ij}  u^p_{j}   ~\text{ in } \Omega,\\
u_{i}\geq 0 \text{ in } \Omega, \quad u_{i}=0 \text{ on } \partial\Omega,  \quad i=1,...d,
\end{cases}
\end{equation}
when $\Omega$ is a bounded smooth domain of $\R^N$, $N\geq3$, in a critical or subcritical regime $2<2p\leq2^*:=2N/(N-2)$,  and $\lambda_i>-\lambda_1(\Omega)$, where $\lambda_{1}(\Omega)$ is the first eigenvalue of $(-\Delta, H_{0}^{1}(\Omega))$. The solutions of system \eqref{S-system} are related to the system of Gross--Pitaevskii equations
\begin{equation}\label{S-system-2}
\begin{cases}
\imath \partial_t \Phi_i + \Delta \Phi_i + \Phi_i |\Phi_i|^{p-2}\sum_{j=1}^d\beta_{ij}  |\Phi_j|^p=0, \\
\Phi_i=\Phi_i(t,x),\quad i=1,...d,
\end{cases}
\end{equation}
where $\imath$ is the imaginary unit. This system comes from many physical models; for example, system \eqref{S-system-2} can be used to describe Bose-Einstein condensation (see \cite{Timmermans 1998}). In the models, the solutions $\Phi_i(i=1,...d)$ are the corresponding condensate amplitudes, $\beta_{ii}$ represent self-interactions within the same component, while $\beta_{ij}$ $(i\neq j)$ expresses the strength and the type of interaction between different components $i$ and $j$. When $\beta_{ij}>0$ this represents cooperation, while $\beta_{ij}<0$ is of competitive type. In order to acquire standing wave solutions, we denote $\Phi_i(x,t)=e^{\imath \lambda_i t}u_i(x),i=1,...d$. Then $u_i$ satisfies system \eqref{S-system} if and only if $\Phi_i$ satisfies \eqref{S-system-2}.

Denote $\mathbf{H}:=H^1_0(\Omega)\times \cdots \times H^1_0(\Omega)$. Because of the assumption $\beta_{ij}=\beta_{ji}$, by the Sobolev embedding $H^1_0(\Omega)\hookrightarrow L^{2^*}(\Omega)$ and since $2p\leq 2^*$, we know that the solutions of system \eqref{S-system} are critical points of the $C^1$ functional $J:\mathbf{H}\to \mathbb{R}$ given by
\begin{equation}\label{Functional-1}
J(\mathbf{u})=J(u_1,\ldots, u_d):=\frac{1}{2}\sum_{i=1}^{d} \|u_i\|_i^2-\frac{1}{2p}
\int_\Omega \sum_{i,j=1}^{d}\beta_{ij}|u_{i}|^p|u_{j}|^p
\end{equation}
where $\|u_i\|_i^2:=\int_{\Omega} (|\nabla u_{i}|^{2}+\lambda_{i}u_{i}^{2})$. Note, however, that $J$ is not of class $C^2$ for all ranges of $2<2p\leq 2^*$.

\begin{defn}
A vector $\mathbf{u}=(u_{1},\cdots,u_{d})$ is called a \emph{fully nontrivial solution} of \eqref{S-system} if $J'(\mathbf{u})=0$ in $\mathbf{H}^{-1}$ with $u_i\not \equiv 0$ for every $i=1,\ldots,d$. The vector $\mathbf{u}$ is called a \emph{positive solution} of \eqref{S-system} if $\mathbf{u}$ is a solution and $u_i> 0$ for every $i=1,\ldots,d$. A positive solution $\mathbf{u}$ is called a \emph{least energy positive solution} if $J(\mathbf{u})\leq J(\mathbf{v})$ for any positive solution $\mathbf{v}$ of \eqref{S-system}. A solution $\mathbf{u}\not\equiv \mathbf{0}$ is called a \emph{ground-state solution} if $J(\mathbf{u})\leq J(\mathbf{v})$ for any solution $\mathbf{v}$ of \eqref{S-system} with $\mathbf{v} \not \equiv \mathbf{0}$.
\end{defn}

In order to motivate our research, in what follows we quickly review the known results in the literature (describing only  the papers which are more related with our approach and results). The existence of solutions to \eqref{S-system} has been studied intensively in the last two decades  in the \emph{subcritical} case $2p=2^*$, starting with the two equations case $d=2$ in \cite{Ambrosetti 2007,Bartsch-Dancer-Wang 2010,MaiaMontefuscoPellacci, Mandel 2015,Sirakov 2007}.
 In this situation, there is only one interaction constant $\beta=\beta_{12}=\beta_{21}$. For an arbitrary number of equations $d\geq 3$, the problem \eqref{S-system} becomes more complex due to the possible occurrence of simultaneous cooperation and competition, that is, the existence of at least two pairs, $(i_1,j_1)$ and $(i_2,j_2)$, such that $i_1\neq j_1, i_2\neq j_2$, $\beta_{i_1 j_1}>0$ and $\beta_{i_2 j_2}<0$; this study started in \cite{Wei 2005} with $N\geq 3$, $p=2$. Recently, in \cite{BSWang 2016, BLWang 2019, Sato-Wang 2015} the authors investigated the existence and asymptotic behavior of least energy positive solutions for $d=3$ with strong cooperation on a bounded domain.  In \cite{Soave 2015,Soave-Tavares 2016}, by dividing the $d$ components into $m$ groups, the authors proved that system \eqref{S-system} has a least energy positive solution under appropriate assumptions on $\beta_{ij}$. Other related and recent results in the subcritical case can be found in \cite{ClappPistoia, ClappSzulkin, Correia,Correia2, TO 2016, Peng-Wang 2019, Wei-Wu 2020} and references, where we stress that the first two  mentioned papers deal with a general $p$.

For the \emph{critical} case $2p=2^*$, when $d=1$ the system \eqref{S-system} is reduced to the classical Br\'ezis-Nirenberg problem \cite{Brezis 1983}, where the existence of a positive ground state is shown for $-\lambda_1(\Omega)<\lambda_i<0$ when $N\geq 4$. For the $d=2$ equation case, Chen and Zou \cite{Zou 2012} show that there exist $0<\beta_{1}<\beta_{2}$ (depending on $\lambda_{i}$ and $\beta_{ii}$) such that
\begin{equation}\label{Range-1}
\text{system } \eqref{S-system}\text{ has a least energy positive solution if }  \beta_{12}\in (-\infty, \beta_{1}) \cup (\beta_{2}, +\infty)\text{ when } N=4,\ p=2.
\end{equation}
We mention that, when $p=2$ and $\beta\in [\min\{\beta_{11},\beta_{22}\},\max\{\beta_{11},\beta_{22}\}]$, \eqref{S-system} does not have a least energy positive solution.
Still for $d=2$ equations but in the higher dimensional case, Chen and Zou \cite{Zou 2015} proved that
\begin{equation}\label{Range-2}
\text{ system }\eqref{S-system} ~\text{ has a least energy positive solution for any } \beta_{12}\neq 0 ~\text{ when } N\geq 5,\ 2p=2^*.
\end{equation}
In particular, from \eqref{Range-1} and \eqref{Range-2} one deduces that the structure of least energy positive solutions in the critical case changes significantly from $N=4$ to $N\geq 5$. The reason behind this change is the fact that $p\in (1,2)$ whenever $N\geq 5$, while $p=2$ for $N=4$ (in the subcritical case, the importance of this fact has has been implicitly pointed out by Mandel in \cite{Mandel 2015}, see also \cite{TO 2016}). For more results regarding the general critical case with $d=2$ equations, see \cite{Chen-Zou 2014,Pistoia 2018-2,Peng-Wang 2016}.

For three or more equations ($d\geq 3$) in the critical case $2p=2^*$, Clapp and Szulkin \cite{ClappSzulkin} proved that existence of least energy positive solutions for the purely competitive cases in a bounded smooth domain of $\R^N$ with $N\geq 4$ (see also Wu \cite{Wu Y-Z 2017}). Yin and Zou \cite{Yin-Zou}, on the other hand, deal with the purely cooperative case \eqref{S-system} with $N\geq 5$.  Recently, Tavares and You \cite{TS 2019} obtained the existence of least energy positive solutions for the mixed case in a bounded smooth domain of $\R^4$ (see Remark \ref{rem:compare with TY} below for a description of the results from  \cite{TS 2019}).  For other topics related to critical systems (e.g. blowing up solutions as $\lambda_i\to 0^-$ or the case $\lambda_i=0$ in the whole space), see \cite{ChenLin, ClappPistoia, DovettaPistoia, Zou 2018,Yang 2018,  Pistoia 2018-1,Tavares 2017,Tavares 2019}.

We see from \eqref{Range-1} and \eqref{Range-2} that it is meaningful to study the general critical case $2p=2^*$ for \eqref{S-system}, spliting the discussion between the cases $N=4$ and $N\geq 5$. To the best of our knowledge, there are no papers considering the Br\'ezis-Nirenberg type system \eqref{S-system} in the critical case with simultaneous cooperation and competition in the higher dimensional case $N\geq 5$, and this paper gives a first contribution in this direction. We focus on providing conditions on the coefficients that ensure the existence of least energy positive solutions, and we wish to uncover some different phenomena comparing to the special case $N=4$.

Let us now state our structural assumptions. We always assume that
\begin{equation}\label{eq:coefficients}
-\lambda_1(\Omega)<\lambda_1,\ldots, \lambda_d<0,\qquad \Omega \text{ is a bounded smooth domain of } \mathbb{R}^N, \quad 2p=2^*.\tag{H1}
\end{equation}
Moreover,
\begin{equation}\label{eq:beta_ij}
\beta_{ii}>0 \quad \forall i=1,\ldots, d,\qquad \beta_{ij}=\beta_{ji}\quad \forall i, j=1,\ldots, d,\ i\neq j. \tag{H2}
\end{equation}
Next, to state the assumptions related with interactions between different components, we separate the components into $m$ groups, following \cite{Soave 2015,Soave-Tavares 2016, TS 2019}. For any arbitrary $1\leq m \leq d$, we say that a vector $\mathbf{a}=(a_{0},...,a_{m})\in \mathbb{N}^{m+1}$ is an $m$-decomposition of $d$ if
\begin{equation*}
  0=a_{0}<a_{1}<\cdot \cdot \cdot<a_{m-1}<a_{m}=d, \quad \text{ and in this case define } \quad I_{h}:=\left\{i\in \{1,...,d\}:a_{h-1}<i\leq a_{h}\right\}
\end{equation*}
for every $h=1,\ldots, m$. Moreover,
\[
\mathcal{K}_{1}:=\left\{(i,j)\in I_{h}^{2}  \text{ for some } h=1,...,m, \text{ with } i\neq j\right\},\quad \mathcal{K}_{2}:=\left\{(i,j)\in I_{h}\times I_{k} \text{ with } h\neq k\right\}.
\]
In this way, we say that the components $u_i$ and $u_j$ belong to the same group if $(i,j)\in \mathcal{K}_1$, and to a different group if $(i,j)\in \mathcal{K}_2$. In the present paper, we will suppose that the interactions between elements of the same group satisfy the following condition
\begin{equation}\label{Conditions for the same group}
\text{ for any } h=1,\ldots,m, \quad \sum_{(i,j)\in I_h^2}\int_{\Omega}\beta_{ij}|v_i|^p|v_j|^p>0\ \forall \mathbf{v}=(v_{a_{h-1}+1},\ldots, v_{a_h})\in (H^1_0(\Omega))^{|I_h|}\setminus \{\mathbf{0}\} \tag{H3}
\end{equation}
(cf. \cite{Yang 2018}) while  the interaction between elements of different groups is competitive:
\begin{equation}\label{eq:generalcondition_beta}
-\infty<\beta_{ij}\leq 0  \quad\forall (i,j)\in \mathcal{K}_{2}. \tag{H4}
\end{equation}
Observe that condition \eqref{Conditions for the same group} holds if the interaction between elements of the same group is cooperative: $\beta_{ij}\geq 0$ for any $(i,j)\in \mathcal{K}_1$ and $\beta_{ii}>0$ for every $i=1,\ldots, d$.  This condition is actually more general: if the matrix $\mathcal{B}_h:=(\beta_{ij})_{(i,j)\in I_h^2}$ is positive definite (or even copositive definite) for every $h=1,\ldots, m$, then \eqref{Conditions for the same group} is satisfied. Therefore, condition \eqref{Conditions for the same group} also contains the case of simultaneous cooperation and competition within the same group of components.

Next we introduce the main results of this paper.

\subsection{Main results}

Consider the Nehari-type set
\begin{align}\label{Manifold-1}
\mathcal{N}=
\left\{ \mathbf{u}=(u_1,\ldots, u_d)\in \mathbf{H}:\sum_{i\in I_{h}}\|u_{i}\|_{i}\neq 0 \text{ and } \sum_{i\in I_{h}}
\partial_{i}J(\mathbf{u})u_{i}=0 \text{ for every } h=1,...,m
\right\}
\end{align}
and the infimum of $J$ on the set $\mathcal{N}$:
\begin{equation}\label{Minimizer-1}
c:=\inf_{\mathbf{u}\in \mathcal{N}}J\left(\mathbf{u}\right).
\end{equation}

\begin{thm}\label{Theorem-1}
Let $\mathbf{a}$ be an m-decomposition of $d$ for some $1\leq m\leq d$ and assume conditions \eqref{eq:coefficients}--\eqref{eq:generalcondition_beta} hold true for $N\geq 5$.
Then $c$ is attained by a nonnegative $\mathbf{u}\in \mathcal{N}$. Moreover, any minimizer is a  solution of \eqref{S-system} with at least $m$ nontrivial components.
\end{thm}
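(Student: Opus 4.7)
The plan is a direct constrained minimisation of $J$ on $\mathcal{N}$, followed by a Lagrange-multiplier identification and a sign cleanup.

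\textbf{Step 1 (Geometry of $\mathcal{N}$ and positivity of $c$).} Summing the $m$ group-wise identities defining $\mathcal{N}$ yields the global Nehari-type relation $\sum_{i=1}^d\|u_i\|_i^2=\sum_{i,j=1}^d\beta_{ij}\int_\Omega|u_i|^p|u_j|^p$, so for every $\mathbf{u}\in\mathcal{N}$
\[
J(\mathbf{u})=\frac{p-1}{2p}\sum_{i=1}^d\|u_i\|_i^2,
\]
whence $J|_\mathcal{N}$ is coercive. For each group $h$, combining the $h$-th constraint with \eqref{eq:generalcondition_beta} (which discards the unfavourable cross-group contributions) and the Sobolev embedding gives $\sum_{i\in I_h}\|u_i\|_i^2\leq C_h\bigl(\sum_{i\in I_h}\|u_i\|_i^2\bigr)^p$; since $p>1$, each group carries a uniform mass $\delta_h>0$, so $c\geq\tfrac{p-1}{2p}\sum_h\delta_h>0$. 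Non-emptiness of $\mathcal{N}$ follows from a fibering argument: for any $\mathbf{v}\in\mathbf{H}$ with $\sum_{i\in I_h}\|v_i\|_i>0$ for every $h$, one finds a unique $(t_1,\ldots,t_m)\in(0,\infty)^m$ such that the scaled vector lies in $\mathcal{N}$, using \eqref{Conditions for the same group}, \eqref{eq:generalcondition_beta} and $p>1$.

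\textbf{Step 2 (Palais--Smale sequence and recovery of compactness).} By Ekeland's variational principle applied to the constrained problem, take $(\mathbf{u}_n)\subset\mathcal{N}$ with $J(\mathbf{u}_n)\to c$ and $J'(\mathbf{u}_n)-\sum_h\mu_{h,n}\Phi_h'(\mathbf{u}_n)\to 0$ in $\mathbf{H}^{-1}$, where $\Phi_h(\mathbf{u}):=\sum_{i\in I_h}\partial_iJ(\mathbf{u})u_i$; Step 1 yields boundedness and a weak limit $\mathbf{u}$. Testing this identity against the ``group-$l$ part'' of $\mathbf{u}_n$ for each $l=1,\ldots,m$ produces an $m\times m$ linear system for $(\mu_{h,n})$ whose matrix has strictly negative diagonal, non-negative off-diagonal, and row-sum equal to $(2p-2)\sum_{i\in I_h}\|u_{i,n}\|_i^2>0$ by \eqref{Conditions for the same group}, \eqref{eq:generalcondition_beta} and $p>1$; hence the matrix is uniformly invertible and $\mu_{h,n}\to 0$, so $(\mathbf{u}_n)$ is Palais--Smale for $J$. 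The main obstacle is now the critical exponent $2p=2^*$. Using Brezis--Lieb for both $\int|u_{i,n}|^{2^*}$ and the cross products $\int|u_{i,n}|^p|u_{j,n}|^p$, and the strong $L^2$ convergence of the subcritical mass terms, decompose
\[
J(\mathbf{u}_n)=J(\mathbf{u})+J_\infty(\mathbf{u}_n-\mathbf{u})+o(1),\qquad \Phi_h(\mathbf{u}_n)=\Phi_h(\mathbf{u})+\Phi_{h,\infty}(\mathbf{u}_n-\mathbf{u})+o(1),
\]
with $J_\infty$, $\Phi_{h,\infty}$ the $\lambda_i=0$ counterparts. The residual $\mathbf{w}_n:=\mathbf{u}_n-\mathbf{u}$ is then Palais--Smale for $J_\infty$ at level $c-J(\mathbf{u})$. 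A test-function construction in the Brezis--Nirenberg spirit, using Aubin--Talenti bubbles centred at an interior point of $\Omega$ and exploiting $-\lambda_1(\Omega)<\lambda_i<0$, yields the strict bound $c<J(\mathbf{u})+\frac{1}{N}S_*^{N/2}$ with $S_*$ a Sobolev-type constant for the surviving subsystem, below the threshold for bubble formation. Consequently $\mathbf{w}_n\to 0$ strongly, $\mathbf{u}\in\mathcal{N}$, and $J(\mathbf{u})=c$.

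\textbf{Step 3 (PDE, nonnegativity, lower bound on nontrivial components).} Since $J$ and $\mathcal{N}$ are invariant under $u_i\mapsto|u_i|$, the minimiser may be taken nonnegative. Re-running the multiplier computation of Step 2 at $\mathbf{u}$ itself yields $\mu_h=0$ for every $h$, so $\mathbf{u}$ solves \eqref{S-system} in $\mathbf{H}^{-1}$. The definition of $\mathcal{N}$ forces $\sum_{i\in I_h}\|u_i\|_i\neq 0$ for each $h$, giving at least $m$ nontrivial components. The truly delicate point is the strict energy upper bound in Step 2: in contrast with the $N=4$ case of \cite{TS 2019}, where $p=2$ renders $J$ of class $C^2$ and the bubble interaction quadratic, here $p<2$ changes the scaling of the mixed products $|u_i|^p|u_j|^p$ in the test-function expansion, which is precisely the phenomenon highlighted in the introduction.
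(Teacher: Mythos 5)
Your overall scaffolding (Ekeland, Lagrange multipliers, Brezis--Lieb, strict energy bound) is the paper's, but the compactness step has a genuine gap. You argue for a single Brezis--Nirenberg-type threshold $c < J(\mathbf{u}) + \frac{1}{N}S_*^{N/2}$ and conclude that the residual $\mathbf{w}_n := \mathbf{u}_n - \mathbf{u}$ must vanish. That does not suffice here. With $m$ groups, the weak limit $\mathbf{u}$ can lose \emph{any subset} $G \subsetneq \{1,\ldots,m\}$ of its group components, and the groups that escape to infinity each contribute a level $l_h$ (the ground state energy of the limiting subsystem \eqref{sub-system} in $\mathbb{R}^N$), not some universal $\frac{1}{N}S_*^{N/2}$. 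The paper rules out every partial-vanishing configuration by proving the family of strict inequalities
\[
c_\Gamma < c_G + \sum_{h\in\Gamma\setminus G} l_h - \delta \qquad \text{for every } G\subsetneq\Gamma,
\]
where $c_G$ is the level of the corresponding subsystem, together with $c < \sum_h l_h - \delta$ (Theorems \ref{Energy Estimates} and \ref{Energy Estimation-1,2,m-1}). Proving the second inequality needs the hypothesis that $c_G$ is \emph{attained} by a nonnegative minimizer, which is why the whole argument is organized as an induction on the number of groups $|\Gamma|$: the attainment at lower levels is the induction hypothesis, and one builds test functions by gluing a minimizer of $c_G$ with Aubin--Talenti bubbles placed in a region where the $c_G$-minimizer is small (this is where the estimate \eqref{3-p-23}--\eqref{3-p-16} controlling the cross-group terms enters). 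Your proposal has no induction and no mechanism to compare against $c_G$, so it cannot exclude, say, group $1$ converging strongly while group $2$ develops a bubble.

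Two smaller issues. In Step 1, the claim that any $\mathbf{v}$ with every $\sum_{i\in I_h}\|v_i\|_i>0$ projects uniquely onto $\mathcal{N}$ is false when $\beta_{ij}\leq 0$ in $\mathcal{K}_2$: the system $t_h^{2-p}\|\mathbf{v}_h\|_h^2 = t_h^p A_{hh} + \sum_{k\neq h}t_k^p A_{hk}$ with $A_{hk}\leq 0$ need have no solution in $(0,\infty)^m$ unless the cross-group terms are small relative to the same-group ones (e.g.\ disjoint group supports, which is what the paper uses to show $\mathcal{N}\neq\emptyset$). In Step 2, your description of the multiplier matrix (strictly negative diagonal, non-negative off-diagonal, positive row sum) is internally inconsistent as a diagonal-dominance statement: with those signs $|b_{kk}| < \sum_{h\neq k}|b_{kh}|$. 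The correct signs, used in Lemma \ref{Lagrange} and Proposition \ref{PS Sequence}, are positive diagonal $(2p-2)M_{kk}+(p-2)\sum_{h\neq k}M_{kh}$, non-positive off-diagonal $pM_{kh}$, and row sum $(2p-2)\|\mathbf{u}_k\|_k^2>0$, giving genuine strict diagonal dominance; this is one of the places where $1<p<2$ and $\beta_{ij}\leq 0$ in $\mathcal{K}_2$ are both indispensable.
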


The proof of Theorem \ref{Theorem-1} is inspired by that of \cite[Theorem 1.3]{Zou 2015} (which deals with $d=2$ equations, $N\geq 5$) and
\cite[Theorem 1.1]{TS 2019}  (which deals with and arbitrary number of equations, but with $N=4$ and $p=2$). The method in \cite{Zou 2015} cannot be used here directly due to the presence of multi-components, and so we need borrow some ideas from \cite{TS 2019} to deal with several groups of components.  There are, however, two key differences with respect with \cite{TS 2019}: here we assume condition \eqref{Conditions for the same group} and not simply $\beta_{ij}>0$ for every $(i,j)\in \mathcal{K}_1$; moreover, $1<p<2$ due to the fact that $N\geq 5$ and $2p=2^*$.

\begin{remark}\label{rem:differencesN}
Related to this last paragraph, we would like to point out that in \cite[Theorem 1.1]{TS 2019} the authors prove a version of  this theorem in the case $N=4$ and $2^*=4$ by considering $\beta_{ij}\leq K$ for every $(i,j)\in \mathcal{K}_{2}$ for some $K>0$, and $\beta_{ij}\geq 0$ for $(i,j)\in \mathcal{K}_1$. The first observation is that in \cite[Theorem 1.1]{TS 2019}, the cooperation assumption in $\mathcal{K}_1$ can be replaced by condition \eqref{Conditions for the same group}. The second observation is that the fact that we only study the case $\beta_{ij}\leq 0$ in $\mathcal{K}_{2}$ in this paper is a direct consequence of the fact that $1<p<2$ in higher dimensions ($N\geq 5$). This is important, for instance, in the proof of Lemma \ref{Bounded} ahead.
\end{remark}

Based on Theorem \ref{Theorem-1}, we prove the existence of least energy positive solutions in several situations.

\begin{thm}\label{thm:leps} Assume that \eqref{eq:coefficients} and \eqref{eq:beta_ij} hold, $N\geq 5$. Then the system admits a least energy positive solution in each one of the following circumstances :
\begin{enumerate}
\item  $\beta_{ij}> 0 \ \forall i,j=1,\ldots, d$, $i\neq j$.
\item  $\beta_{ij}\leq 0 \ \forall i,j=1,\ldots, d$, $i\neq j$.
\item  $\mathbf{a}$ is an $m$-decomposition of $d$ for some $1< m< d$, the condition \eqref{Conditions for the same group} holds true, and
\[
 -\epsilon\leq\beta_{ij}< 0\ \forall (i,j)\in \mathcal{K}_{2},
\]
 for some $\epsilon=\epsilon(\lambda_i,\beta_{ii},(\beta_{ij})_{(i,j)\in \mathcal{K}_1})>0$;
\item  $\mathbf{a}$ is an $m$-decomposition of $d$ for some $1< m< d$, the condition \eqref{Conditions for the same group} holds true,
and for every $M>1$ there exists $b=b(\lambda_i,\beta_{ii},(\beta_{ij})_{(i,j)\in \mathcal{K}_1},M)>0$ such that
\[
\frac{1}{M}\leq \left|\frac{\beta_{i_1j_1}}{\beta_{i_2j_2}}\right|\leq M, ~~\forall (i_1,j_1), (i_2,j_2)\in \mathcal{K}_{2}\quad \text{ and }\quad  \beta_{ij}\leq -b ~~\forall (i,j)\in \mathcal{K}_2.
\]
\end{enumerate}
Moreover, under Case (1), the solution is a ground-state solution.
\end{thm}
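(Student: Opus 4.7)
My plan is to treat all four cases by first applying Theorem \ref{Theorem-1} with a judiciously chosen $m$-decomposition in order to produce a nonnegative critical point $\mathbf{u}\in\mathcal{N}$, and then to upgrade it to a fully positive solution via the strong maximum principle combined with energy comparisons against degenerate configurations. The least energy / ground-state conclusions will follow because any competing positive (or nontrivial) solution automatically lies in the appropriate Nehari-type set $\mathcal{N}$.

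Cases (1) and (2) are the direct ones. For Case (2), I take the full $m=d$ decomposition, which is compatible with \eqref{eq:generalcondition_beta} since $\beta_{ij}\le 0$ for every $i\neq j$; Theorem \ref{Theorem-1} then yields a nonnegative solution with at least $d$ nontrivial components, i.e.\ fully nontrivial, and hence positive by the strong maximum principle applied equation by equation (using $\lambda_i>-\lambda_1(\Omega)$). Since any positive solution $\mathbf{v}$ satisfies the $d$ separate Nehari identities $\partial_i J(\mathbf{v})v_i=0$, it lies in $\mathcal{N}_{m=d}$ and so $J(\mathbf{u})\leq J(\mathbf{v})$. For Case (1), I instead pick $m=1$, the only choice compatible with $\beta_{ij}>0$, so that $\mathcal{N}$ is defined by the single pooled Nehari identity and therefore contains every nontrivial solution of the system; hence the minimizer $\mathbf{u}$ is automatically a ground state, and it only remains to show that each component is nontrivial. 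I would argue by contradiction: if $u_{i_0}\equiv 0$, perturb by $\tilde u_{i_0}=s\varphi$ with $\varphi>0$ a suitable test function, and project onto $\mathcal{N}$ by a unique scalar $t(s)>0$; the positivity of the cross terms $\beta_{i_0 j}\int_\Omega |\tilde u_{i_0}|^p|u_j|^p$ for some $j$ with $u_j\not\equiv 0$ lowers the Nehari energy along this ray, contradicting the minimality of $c$. Strict positivity of each $u_i$ then comes from the strong maximum principle.

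Cases (3) and (4) are the delicate ones, and require comparisons between the full level $c$ and the levels of sub-systems. In each case I apply Theorem \ref{Theorem-1} with the prescribed $m$-decomposition, obtain a nonnegative $\mathbf{u}$ with at least one nontrivial component in each group $I_h$, and must show that \emph{every} component is nontrivial. Let $c_h$ denote the least energy positive-solution level for the purely intra-group sub-system supported on $I_h$, whose existence is guaranteed by Case (1) applied within each group using hypothesis \eqref{Conditions for the same group}. The heuristic is that in the limiting regimes (respectively, $\epsilon\to 0^+$ in Case (3), and $b\to+\infty$ in Case (4), with the bilateral bound on $|\beta_{ij}|/|\beta_{i'j'}|$ ruling out runaway behaviour) the groups decouple and $c\to\sum_h c_h$, whereas any configuration in which some group $I_h$ carries a strict subset of positive components produces strictly higher energy within that group by the Case (1) analysis applied to $I_h$. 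Quantifying this by explicit trial configurations built from patched sub-system solutions yields a strict inequality $c<\min_{\text{degenerate}} J$, forcing full nontriviality of the minimizer; positivity then follows from the strong maximum principle, and the least-energy property as in Case (2) because every positive solution of the full system belongs to $\mathcal{N}$ for this decomposition.

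The main obstacle will be Case (4): when the $\beta_{ij}$ on $\mathcal{K}_2$ are large in absolute value one expects phase separation between groups, and controlling the interaction integrals $\int_\Omega \beta_{ij}|u_i|^p|u_j|^p$ together with uniform bounds on minimizing sequences as $b\to+\infty$ is technically demanding. The bilateral bound $1/M\leq|\beta_{i_1 j_1}/\beta_{i_2 j_2}|\leq M$ is exactly what permits a single simultaneous renormalization of all coefficients in $\mathcal{K}_2$. The perturbation argument in Case (1) also uses $1<p<2$ in a subtle way, since the projection scalar $t(s)$ behaves non-smoothly as $s\to 0^+$ in the critical higher-dimensional regime, echoing Remark \ref{rem:differencesN}.
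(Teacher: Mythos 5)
Cases (1) and (2) of your plan reproduce the paper's argument. The choice of decomposition ($m=1$ for (1), $m=d$ for (2)) is the same, the pooled/full Nehari reasoning for the least-energy and ground-state conclusions is correct, and your perturbation sketch for Case (1) is the Oliveira--Tavares argument that the paper invokes; you correctly flag the role of $1<p<2$ there.

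For Cases (3) and (4) your framework (contradiction sequence $\beta^n_{ij}\to 0^-$, resp.\ $\to -\infty$, strong $H^1_0$ convergence via the asymptotic theorems, then contradiction) matches the paper at the top level, but the concrete contradiction mechanism you describe has a gap. You propose to conclude by arguing that the limit $\overline{\mathbf{u}}_h$ must be fully nontrivial in each group ``by the Case (1) analysis applied to $I_h$,'' and then contradict the persistence of the zero component. But Case (1) requires $\beta_{ij}>0$ for \emph{all} $i\neq j$ inside the group, whereas the hypothesis in Cases (3)--(4) is only \eqref{Conditions for the same group}, which allows genuine intra-group competition (some $\beta_{ij}<0$ with $(i,j)\in\mathcal{K}_1$) as long as the intra-group form is positive on nonzero vectors. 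Under \eqref{Conditions for the same group} alone it is \emph{not} established that the sub-system ground state is fully nontrivial (note, for instance, that the whole-space minimizers in Theorem \ref{limit system-6-1} can have zero coordinates whenever $f_h$ attains its maximum on a face of the unit sphere), so the step ``the limit has no zero component, contradiction'' does not follow. The paper avoids this issue altogether: it never tries to show the limit is fully nontrivial. Instead it perturbs the finite-$n$ minimizer directly, replacing the vanishing component $u^n_{m_{\overline{i}}}$ by $|s|^{1/p-1}s\,u^n_{m_{\widehat{i}}}$ for a non-vanishing $m_{\widehat{i}}$ in the same group with $|u^n_{m_{\widehat{i}}}|_{2p}^2\geq C_1/d$, uses the implicit function theorem and Cramer's rule to compute the derivatives $\partial_s t^n_h(0)$ of the Nehari projection scalars, shows that $\partial_s t^n_m(0)$ stays bounded away from $0$ from below in the limit while $\partial_s t^n_h(0)\to 0$ for $h<m$, and then exploits $1<p<2$ to check that the first-order (linear in $s$) decrease beats the sublinear gain $\|u^n_{m_{\widehat{i}}}\|^2_{m_{\overline{i}}}|s|^{2/p}$ for small $s>0$, producing $c^n<c^n$. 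Your sketch replaces this by the vague phrase ``quantifying this by explicit trial configurations built from patched sub-system solutions yields $c<\min_{\mathrm{degenerate}}J$''; that quantification is exactly what the implicit-function/derivative computation supplies, and it is the step missing from your plan.
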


We now make a few remarks regarding the four cases present in Theorem \ref{thm:leps}. Case (1) - the purely cooperative case - corresponds to taking a decomposition with $m=1$ group, $\mathbf{a}=(0,d)$, so that the level $c$ reduces to
\begin{equation}\label{ground state}
\inf_{\mathcal{\widetilde N}}J\left(\mathbf{u}\right),\quad \text{ where }  \quad\mathcal{\widetilde N}:= \Big\{\mathbf{u}\in \mathbf{H}:\mathbf{u}\neq \mathbf{0} \text{ and }
J'(\mathbf{u})\mathbf{u}=0 \Big\}.
\end{equation}
Theorem \ref{Theorem-1} shows that this level is achieved. The fact that the solution is in fact fully nontrivial is then a consequence of  having $1<p<2$, which allows to use the reasoning from \cite{TO 2016}. As a consequence, the ground-state level coincides with the least energy positive level.
Observe that Case (1) extends the results about two equations for purely cooperative case present in \cite{Zou 2015} to $d$ equations. The third author together with Yin \cite{Yin-Zou} has obtained the existence of least energy positive solutions for the purely cooperative case, but here we give a more direct proof here depending on Theorem \ref{Theorem-1}.

\smallbreak

Theorem \ref{thm:leps} with Case (2) is a direct consequence of Theorem \ref{Theorem-1} in the particular situation $m=d$ (so that $\mathbf{a}=(0,1,2,\ldots, d)$ necessarily). This result is not new, since it re-obtains  \cite[Theorem 1.3]{ClappSzulkin} (which actually deal with possibly different powers when $N\geq 5$. \smallbreak

Cases (3) and (4), on the other hand,  focus on the mixed coefficients case. Inspired by \cite{Soave 2015}, the strategy of the proof focusses on an asymptotic study of the limits as $\beta_{ij}\to 0^-$ (Theorem \ref{asymtotic behavior-1}) or $\beta_{ij}\to -\infty$ (Theorem \ref{Phase Separation}) for every $(i,j)\in \mathcal{K}_2$. By a contradiction argument, we then prove that  the nonnegative solution obtained in Theorem \ref{Theorem-1} is a positive solution. Moreover, it is a least energy positive solution of \eqref{S-system}. Since system \eqref{S-system} involves critical exponents, we need to establish some new estimates (see Theorem \ref{Energy Estimates}, Theorem \ref{Energy Estimation-1,2,m-1}, Lemma \ref{Energy-3} and Lemma \ref{relationship}) to acquire those precise asymptotic behaviors of the nonnegative solutions.

Case (3) corresponds to a situation where the interaction between elements of the same group contains the cooperative case while  the interaction between elements of different groups is weakly competitive. Case (4), on the other hand, corresponds to a situation where the interaction between elements of the same group contains the pure cooperative case, while the interaction between elements of different groups is strongly competitive.

\begin{remark}
 In the special case $N\leq 3$ and $p=2$, there is no positive solution for some ranges of parameters in the purely cooperative case (see \cite[Proposition 1.1]{Soave 2015}); this result (with the same proof) also holds true in the critical case $N=4$ and $p=2$. However, we deduce from Theorem \ref{thm:leps} (or  \cite{Yin-Zou}) that \eqref{S-system} has a positive solution for any $\beta_{ij}\geq0$ with $N\geq5$. Hence, the general case $N\geq5$ is quite different from the case $N=4$.

\begin{remark}\label{rem:compare with TY}
In \cite[Corollary 1.6, Theorem 1.7, Theorem 1.8]{TS 2019}, the first two authors presented the existence of least energy positive solution under \eqref{eq:coefficients}--\eqref{eq:beta_ij} with  $N=4$  ($p=2$) in each one of the following situation:
\begin{itemize}
\item $-\infty<\beta_{ij}<\Lambda \quad\forall i\neq j$, for some $\Lambda>0$ depending only on $\beta_{ii}, \lambda_{i}$;
\item $\lambda_{i}=\lambda_{h}$ for every $i\in I_{h}, h=1,\ldots, m$; $\beta_{ij}=\beta_{h}>\max\{\beta_{ii}: i\in I_{h}\}$ for every $(i,j)\in I_{h}^{2}$ with $i\neq j, h=1,\ldots, m$; $\beta_{ij}=b<\Lambda$ for every $(i,j)\in \mathcal{K}_{2}$;
\item $\lambda_{i}=\lambda_{h}$ for every $i\in I_{h}, h=1,\ldots, m$; $\beta_{ij}=\beta_{h}>\frac{\alpha}{\alpha-1}\max_{i\in I_h}\{\beta_{ii}\}$ for every $(i,j)\in I_{h}^{2}$ with $i\neq j, h=1,\ldots, m$; $ |\beta_{ij}|\leq \frac{\Lambda}{\alpha d^{2}}$ for every $(i,j)\in \mathcal{K}_{2}$;
\end{itemize}
Observe that in our results for $N\geq 5$, $1<p<2$, we do not require strong cooperation between components belonging to same group ($\beta_{ij}$ for $(i,j)\in \mathcal{K}_1$), nor that $\beta_{ij}=\beta_h$ or $\lambda_{i}=\lambda_{h}$ within the same group $I_h$. With $p=2$, instead, the fact that one asks this is not mere a technical fact, as if the $\lambda_i$'s or the $\beta_{ij}$'s are too far apart, then there are no least energy positive solutions \cite{Tavares 2016}. Check below the proof of Theorem \ref{thm:leps} for the role of $p$. Again, we find interesting to see that some different phenomena appears when comparing with $N=4$.

We would also like to point out that, since $1<p<2$, then $J$ is not of class $C^2$, therefore we cannot use the information coming from the second differential of the energy functional $J$ on $\mathcal{N}$ evaluated in a minimum point $\mathbf{u}$ to deduce existence of least energy positive solution, as done for instance in \cite{Soave 2015, TS 2019}.
\end{remark}

\end{remark}

Based upon Theorem \ref{thm:leps}, it is natural to ask what happen when some interactions between elements of different groups are neither too strong nor too weak. For simplicity and to avoid too technical conditions, we present here the following result for the case of only two groups ($m=2$), for the partition $I_1=\{1,2\}$, $I_2=\{3\}$.

\begin{thm}\label{existence-positive-two groups2}
Assume that $m=2$, $d=3$ and let $0<\sigma_0<\sigma_1$. Then there exists $\widehat{\beta}=\widehat{\beta}(\sigma_i,\beta_{ii},\lambda_i)>0$ such that, if
\[
\beta_{13},\beta_{23} \in [-\sigma_1,-\sigma_0],\qquad \beta_{12}>\widehat{\beta},
\] then system \eqref{S-system} has a least energy positive solution.
\end{thm}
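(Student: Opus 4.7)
The plan is to apply Theorem \ref{Theorem-1} with $m=2$ and the decomposition $\mathbf{a}=(0,2,3)$, so that $I_1=\{1,2\}$ and $I_2=\{3\}$. Hypotheses \eqref{eq:coefficients}, \eqref{eq:beta_ij} and \eqref{eq:generalcondition_beta} are built into the assumptions, and \eqref{Conditions for the same group} is automatic from $\beta_{11},\beta_{22},\beta_{12}>0$. Theorem \ref{Theorem-1} then yields a nonnegative minimizer $\mathbf{u}=(u_1,u_2,u_3)\in\mathcal{N}$ of $c$, solving \eqref{S-system} with at least two nontrivial components. Since $I_2=\{3\}$, the Nehari constraint forces $u_3\not\equiv 0$, and the strong maximum principle applied to the third equation gives $u_3>0$. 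The problem reduces to showing that both $u_1$ and $u_2$ are nontrivial for $\beta_{12}$ large enough.

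Assume by contradiction (WLOG by the symmetry of indices $1,2$) that $u_2\equiv 0$. Let $J_{13}$ and $\mathcal{N}_{13}$ denote the energy and Nehari-type set of the $2$-equation subsystem in $(u_1,u_3)$, and set $c_{13}:=\inf_{\mathcal{N}_{13}}J_{13}$. One checks that $(v_1,v_3)\in\mathcal{N}_{13}$ if and only if $(v_1,0,v_3)\in\mathcal{N}$, with $J(v_1,0,v_3)=J_{13}(v_1,v_3)$; hence $c=c_{13}$. Crucially, $c_{13}$ does not depend on $\beta_{12}$. Producing $c<c_{13}$ for $\beta_{12}$ large then contradicts this equality; the symmetric argument rules out $u_1\equiv 0$, the maximum principle upgrades $u_1,u_2\geq 0$ to $u_1,u_2>0$, and since every positive solution lies in $\mathcal{N}$, the resulting $\mathbf{u}$ is a least energy positive solution.

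The key is an upper bound on $c$ together with a uniform lower bound on $c_{13}$. Fix a positive $\psi\in C_c^\infty(\Omega)$ almost-optimal for the scalar Br\'ezis--Nirenberg level $c_3$ of the third equation, and a positive $\phi\in C_c^\infty(\Omega)$ with $\mathrm{supp}(\phi)\cap\mathrm{supp}(\psi)=\emptyset$. Testing $J$ on $(s\phi,s\phi,t\psi)$, the disjoint supports annihilate the integrals $\int\phi^p\psi^p$, so the two group-Nehari conditions decouple to
\[
s^{2p-2}=\frac{\|\phi\|_1^2+\|\phi\|_2^2}{(\beta_{11}+\beta_{22}+2\beta_{12})\int_\Omega\phi^{2p}},\qquad t^{2p-2}=\frac{\|\psi\|_3^2}{\beta_{33}\int_\Omega\psi^{2p}}.
\]
Using the identity $J(\mathbf{v})=\tfrac{1}{N}\sum_i\|v_i\|_i^2$ on $\mathcal{N}$ (from $\tfrac{1}{2}-\tfrac{1}{2p}=\tfrac{1}{N}$) and $s\to 0$ as $\beta_{12}\to\infty$ (with $t$ fixed), we obtain $c\le J(s\phi,s\phi,t\psi)\to J_3(t\psi)\approx c_3$ as $\beta_{12}\to\infty$. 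For the lower bound, for any $(v_1,v_3)\in\mathcal{N}_{13}$ one has $J_{13}(v_1,v_3)=\tfrac{1}{N}(\|v_1\|_1^2+\|v_3\|_3^2)$; since $\beta_{13}<0$, the Nehari equation for $v_3$ forces $\|v_3\|_3^2\leq\beta_{33}\int v_3^{2p}$, so some $\tau\in(0,1]$ places $\tau v_3$ on the scalar Nehari manifold of $u_3$, giving $\tfrac{1}{N}\|v_3\|_3^2\geq\tfrac{1}{N}\tau^2\|v_3\|_3^2\geq c_3$. The Nehari equation for $v_1$ combined with $\beta_{13}\leq 0$ and Sobolev yields $\|v_1\|_1^2\leq\beta_{11}\int v_1^{2p}\leq C\|v_1\|_1^{2p}$, hence a uniform lower bound $\|v_1\|_1\geq\kappa=\kappa(\beta_{11},\lambda_1,\Omega)>0$. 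Therefore $c_{13}\geq c_3+\kappa^2/N$, uniformly in $\beta_{13}\in[-\sigma_1,-\sigma_0]$; choosing $\widehat\beta$ large enough that the previous step delivers $c<c_3+\kappa^2/N$ yields the contradiction.

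The main hurdle is securing this uniform strict gap $c_{13}\geq c_3+\kappa^2/N$ independently of $\beta_{13}\in[-\sigma_1,-\sigma_0]$ (since $c_{13}$ is independent of $\beta_{12}$ while the threshold $\widehat\beta$ must absorb this gap); it rests on the Sobolev/Nehari bound $\|v_1\|_1\geq\kappa>0$ being uniform in that interval. Conceptually, strong cooperation within $I_1$ makes activating $u_2$ energetically cheap via the critical scaling $s^{2p-2}\sim 1/\beta_{12}\to 0$, thereby lowering $c$ strictly below the rigid two-component subsystem level $c_{13}$.
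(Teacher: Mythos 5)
Your proof is correct, and it takes a genuinely different route from the paper's. Both arguments share the same skeleton: start from the nonnegative minimizer of Theorem~\ref{Theorem-1}, observe that if $u_2\equiv 0$ then $c$ coincides with the subsystem level $c_{13}=d_{13}$ (and similarly with $c_{23}=d_{23}$ if $u_1\equiv 0$), and derive a contradiction by showing $c<\min\{d_{13},d_{23}\}$. The paper establishes this strict inequality via Lemma~\ref{Energy estimate9}: it perturbs the minimizer $(u_1,u_3)$ of $d_{13}$ in the direction $(0,\omega,0)$, applies the implicit function theorem to stay on $\mathcal{N}$, and computes the sign of $t'(0)$ and $m'(0)$; the threshold $\widehat{\beta}$ in \eqref{Constant9} then hinges on the uniform compactness bounds $C_3,\dots,C_6$ of Lemma~\ref{uniform bounded-2}, which is why it depends on $\sigma_0,\sigma_1$. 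You instead obtain the upper bound on $c$ by feeding a decoupled competitor $(s\phi,s\phi,t\psi)$ with $\mathrm{supp}(\phi)\cap\mathrm{supp}(\psi)=\emptyset$ into $\mathcal{N}$, noting that the $I_1$--Nehari constraint forces $s^{2p-2}\sim(\beta_{11}+\beta_{22}+2\beta_{12})^{-1}\to 0$, and the lower bound on $c_{13}$ by combining the $v_3$--Nehari inequality $\|v_3\|_3^2\leq\beta_{33}\int v_3^{2p}$ (which gives $\tfrac1N\|v_3\|_3^2\geq c_3$) with the Sobolev/Nehari bound $\|v_1\|_1^2\geq\kappa^2=(S^p/\beta_{11})^{1/(p-1)}$. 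This avoids the implicit function theorem and the compactness Lemma~\ref{uniform bounded-2} entirely, and it yields the slightly stronger conclusion that $\widehat{\beta}$ can be chosen independent of $\sigma_0,\sigma_1$ (indeed, independent of $\beta_{13},\beta_{23}\leq 0$): the constants $\kappa,\kappa_2,J_3(t\psi)-c_3$ and $s^2(\|\phi\|_1^2+\|\phi\|_2^2)$ involve only $\beta_{ii},\lambda_i,\Omega$ and $\beta_{12}$. The trade-off is that the paper's perturbative argument is tailored to the actual subsystem minimizer, and the uniform-gap $c_{13}\geq c_3+\kappa^2/N$ is coarser; but for proving the stated theorem, your approach is cleaner.
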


We mention that $\widehat{\beta}$ is defined below in \eqref{Constant9}. Together with Theorem \ref{thm:leps}, our results give a rather complete picture for the case of two groups. 

\smallbreak

\noindent \textbf{Open questions.} Does a least energy positive solution exists under the general conditions: $\beta_{12}> 0$, $\beta_{13},\beta_{23}<0$? And with more components, is is true that a least energy solution exists for $\beta_{ij}>0$ in $\mathcal{K}_1$ and  $\beta_{ij}<0$ in $\mathcal{K}_2$? Is it possible to obtain optimal thresholds?

\smallbreak

As said before, the proofs of cases (3) and (4) in Theorem \ref{thm:leps} are based on an asymptotic study as $\beta_{ij}^n\to 0^-$ and $\beta_{ij}^n\to -\infty$ for $(i,j)\in \mathcal{K}_2$, respectively. Our analysis in the latter case answers some questions left open in the literature, so we decided to state  those asymptotic results in the introduction. This asymptotic limit is related to the study of phase separation, which has attracted a lot of attention in the past 10-15 years. We refer the reader to the paper \cite{STTZ 2016} for a survey. The starting point for a complete characterization  is a uniform $L^\infty$ bound on the solutions. For the subcritical case $2p<2^*$, these are straightforward to obtain from energy bounds (see for instance \cite{NTTG 2010,Soave 2015,  WT 2008, WT 2008-1}). However, for critical case $2p=2^*$, the problem becomes more complicated, see \cite{Zou 2012, Zou 2015, Wu Y-Z 2017} and Remark \ref{2-9} below. In these references, the precise asymptotic behavior of least energy positive solutions was only acquired in some special cases, see Remark \ref{5.1}. In this paper, based on new estimates we can obtain the precise asymptotic behavior of least energy positive solutions for the general case  (which also covers the situation $N=4$).

We introduce some notation used in \cite{Soave 2015} before stating our results.
Define
\begin{equation*}\label{limiting files minimizing}
c^{\infty}:=\inf_{\mathcal{N}^\infty}J^{\infty},\quad \text{ with }\quad J^{\infty}(\mathbf{u}):=\frac{1}{N}\sum_{h=1}^m\|\mathbf{u}_h\|_h^2,
\end{equation*}
and
\begin{equation*}
 \mathcal{N}^{\infty}:=\left\{\mathbf{u}\in \mathbf{H}: \ 0<\sum_{i\in I_h}\|u_i\|_i^2\leq \sum_{(i,j)\in I_h^2}\int_{\Omega}\beta_{ij}|u_{i}|^p|u_{j}|^p\ \forall  h,\ \text{ and }
u_{i}u_{j}\equiv 0\ \forall (i,j)\in \mathcal{K}_2 \right\}.
\end{equation*}

\begin{thm}\label{Phase Separation}
Let $\mathbf{a}$ be an m-decomposition of $d$ for some $1\leq m\leq d$ and assume conditions \eqref{eq:coefficients}--\eqref{Conditions for the same group} hold true for $N\geq 4$. Let $\beta_{ij}^n<0, n\in \mathbb{N}$ satisfy $\beta_{ij}^n\rightarrow -\infty$ as $n\rightarrow\infty$ for every $(i,j)\in\mathcal{K}_2$.
Take $\mathbf{u}^n=(\mathbf{u}_1^n,\ldots, \mathbf{u}_m^n)$ to be a nonnegative solution provided by Theorem \ref{Theorem-1} with $\beta_{ij}=\beta_{ij}^n$ for every $(i,j)\in\mathcal{K}_2$. Therefore, up to a subsequence we have
\begin{equation*}
u_i^n\rightarrow u_i^\infty \text{ strongly in } H^1_0(\Omega)\cap C^{0,\alpha}(\Omega), \forall i,\ \alpha\in (0,1),\qquad \lim_{n\rightarrow\infty}\int_{\Omega}\beta_{ij}^n|u_i^n|^p |u_j^n|^p= 0 \text{ for every } (i,j)\in\mathcal{K}_2,
\end{equation*}
and $c^\infty$ is achieved by $\mathbf{u}^\infty=(\mathbf{u}^\infty_1,\ldots, \mathbf{u}^\infty_m)$, whose components are nonnegative and Lipschitz continuous. Moreover, given $h=1,\ldots, m$, we have $\mathbf{u}^\infty_h\neq 0$,
\begin{equation}\label{limit equation-4}
-\Delta u_i^\infty+\lambda_i u_i^\infty=(u_i^\infty)^{p-1}\sum_{j\in I_h}\beta_{ij}(u_j^\infty)^p \text{ in } \left\{|\mathbf{u}_h^\infty| >0\right\},\qquad \forall i\in I_h,
\end{equation}
and $\overline{\Omega}=\bigcup_{h=1}^m\overline{\left\{|\mathbf{u}_h^\infty| >0\right\}}$.
\end{thm}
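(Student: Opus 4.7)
The proof is a compactness / passage-to-the-limit argument for the sequence $\mathbf{u}^n$ as the repulsion $\beta_{ij}^n\to-\infty$ between different groups forces segregation. I split the argument into (A) uniform estimates, (B) weak convergence and segregation, (C) Hölder / Lipschitz regularity, (D) identification of the limit and (E) optimality.

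\textbf{(A) Uniform $H^1$ and $L^\infty$ bounds.} First I would note that any $\mathbf{u}\in\mathcal{N}^\infty$ also belongs to the Nehari set $\mathcal{N}$ of $J=J^n$ (for every $n$), because the segregation $u_iu_j\equiv 0$ for $(i,j)\in\mathcal{K}_2$ kills the cross-group terms and forces $J(\mathbf{u})=J^\infty(\mathbf{u})$. Hence $J(\mathbf{u}^n)\leq c^\infty$ uniformly. Since $\mathbf{u}^n\in\mathcal{N}$ solves \eqref{S-system}, one has $J(\mathbf{u}^n)=\frac{1}{N}\sum_i\|u_i^n\|_i^2$, which gives the uniform $H^1$ bound. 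The critical-exponent $L^\infty$ bound is the first real obstacle: I would argue by a Brézis--Kato / Moser iteration adapted to systems with non-positive cross terms, exactly as in \cite{Zou 2015} and the variants used in \cite{Wu Y-Z 2017, TS 2019}; the non-positivity of $\beta_{ij}^n$ for $(i,j)\in\mathcal{K}_2$ is crucial here, as it makes the iterative estimate depend only on $\|u_i^n\|_{H^1}$ and on the positive coefficients inside each group.

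\textbf{(B) Weak convergence and segregation.} Passing to a subsequence, $u_i^n\rightharpoonup u_i^\infty$ in $H^1_0(\Omega)$ and pointwise a.e. The Nehari identities on each group give
\begin{equation*}
\sum_{i\in I_h}\|u_i^n\|_i^2 \;=\; \sum_{(i,j)\in I_h^2}\int_\Omega \beta_{ij}|u_i^n|^p|u_j^n|^p \;+\; \sum_{i\in I_h,\,j\notin I_h}\int_\Omega \beta_{ij}^n |u_i^n|^p |u_j^n|^p,
\end{equation*}
and combining with the $H^1$ bound one deduces that $\sum_{(i,j)\in\mathcal{K}_2}\bigl|\beta_{ij}^n\bigr|\int_\Omega |u_i^n|^p|u_j^n|^p$ is uniformly bounded. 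Since $|\beta_{ij}^n|\to\infty$, I conclude $\int_\Omega |u_i^n|^p|u_j^n|^p\to 0$ for every $(i,j)\in\mathcal{K}_2$, which simultaneously yields the claim $\int_\Omega \beta_{ij}^n|u_i^n|^p|u_j^n|^p\to 0$ and the segregation $u_i^\infty u_j^\infty\equiv 0$ a.e. for $(i,j)\in\mathcal{K}_2$.

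\textbf{(C) Uniform Hölder and Lipschitz regularity.} Using the $L^\infty$ bound from (A), each $u_i^n$ solves a uniformly elliptic equation whose right-hand side is only controlled pointwise through the group coupling. To upgrade weak to strong convergence and to obtain $C^{0,\alpha}$ and eventually Lipschitz bounds for $\mathbf{u}^\infty$, I would invoke the variational segregation machinery of Noris--Tavares--Terracini--Verzini and its extensions (Soave--Zilio, Tavares--Terracini): once $\mathbf{u}^n$ is $L^\infty$--bounded and the cross products $|\beta_{ij}^n| \int u_i^n u_j^n$ are controlled, one obtains uniform $C^{0,\alpha}$ bounds for every $\alpha\in(0,1)$, from which $u_i^n\to u_i^\infty$ in $C^{0,\alpha}(\overline\Omega)$. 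The optimal Lipschitz regularity of $\mathbf{u}^\infty$ then comes from the fact that the limit profile is a minimizer of $J^\infty$ with the segregation condition, so Caffarelli--Lin / Soave--Zilio type arguments apply.

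\textbf{(D)--(E) Limit equation, non-triviality of each group and minimality.} On each open set $\{|\mathbf{u}_h^\infty|>0\}$, the segregation implies $u_j^\infty=0$ for $j\notin I_h$ in a neighbourhood, so passing to the limit in \eqref{S-system} inside this set gives \eqref{limit equation-4}. Strong $H^1$ convergence is obtained by testing the equation for $u_i^n$ against $u_i^n-u_i^\infty$ and using the $L^\infty$ bound together with the vanishing of $\int \beta_{ij}^n|u_i^n|^p|u_j^n|^p$. For each group $I_h$, a uniform lower bound $\sum_{i\in I_h}\|u_i^n\|_i^2\geq\eta>0$ (inherited from the Nehari structure on $\mathcal{N}$ and the coercivity of the $\|\cdot\|_i$'s, thanks to $\lambda_i>-\lambda_1(\Omega)$) passes to the limit to give $\mathbf{u}_h^\infty\neq 0$. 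The identity $\overline\Omega=\bigcup_h \overline{\{|\mathbf{u}_h^\infty|>0\}}$ then follows from the strong convergence and the continuity of $\mathbf{u}^\infty$: if some interior point lay outside every $\overline{\{|\mathbf{u}_h^\infty|>0\}}$, one could modify $\mathbf{u}^\infty$ by a localised perturbation to strictly decrease $J^\infty$, contradicting minimality. Finally, strong convergence and segregation give $\mathbf{u}^\infty\in\mathcal{N}^\infty$ with $J^\infty(\mathbf{u}^\infty)=\lim J(\mathbf{u}^n)\leq c^\infty$, so $c^\infty$ is achieved.

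\textbf{Main obstacle.} The hardest step is (A): in the critical regime the standard subcritical bootstrap fails, and the $L^\infty$ bound must be obtained by a careful Moser--De Giorgi iteration that exploits the sign structure $\beta_{ij}^n\leq 0$ for $(i,j)\in\mathcal{K}_2$. Once this is in place the remaining steps follow the well-established phase-separation scheme, but the $L^\infty$ control is the gateway to all subsequent regularity and convergence statements.
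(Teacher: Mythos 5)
Your overall plan (uniform bounds, segregation, regularity, identification) is the standard phase-separation template, and parts (B), (C) and (D)–(E) are in the spirit of the paper. But the logical ordering of your steps is reversed relative to what the critical exponent actually allows, and this reversal hides the genuine difficulty.

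In step (A) you claim the uniform $L^\infty$ bound can be obtained directly by a Brézis--Kato / Moser iteration. This is precisely what fails in the critical case. In the subcritical regime the iteration closes using the compact embedding $H^1_0\hookrightarrow L^{2p}$, but with $2p=2^*$ you have no compactness and the iteration cannot be started from an $H^1$ bound alone: an energy-concentrating (bubbling) sequence would have uniformly bounded $H^1$ norm yet unbounded $L^\infty$ norm. The paper is explicit about this in Remark~\ref{2-9}: the $L^\infty$ bound is obtained only \emph{after} proving strong $H^1$ convergence, and that in turn requires the nontriviality of every limiting block $\mathbf{u}_h^\infty$. So the correct order is the opposite of yours: first rule out vanishing/concentration of groups (via the refined energy inequalities of Theorems~\ref{Energy Estimates} and~\ref{Energy Estimation-1,2,m-1}, with the crucial margin $\delta>0$), deduce strong $H^1$ convergence and only then run the Moser/De~Giorgi argument of \cite[Lemma 6.1]{Zou 2012} to get $L^\infty$, after which the \cite{STTZ 2016} machinery gives the Hölder/Lipschitz statements.

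Step (D)--(E) contains a second gap, which is the same obstacle in a different disguise: you assert that a uniform lower bound $\sum_{i\in I_h}\|u_i^n\|_i^2\geq\eta>0$ ``passes to the limit'' to yield $\mathbf{u}_h^\infty\neq 0$. The $H^1$ norm is only weakly lower semicontinuous, so a lower bound along the sequence tells you nothing about the weak limit; the mass could escape by concentration. Ruling this out is the hard part. The paper's argument is a dichotomy: if the limit vanishes on $q<m$ groups, then (by the bubbling analysis plus Lemma~\ref{relationship} and Theorem~\ref{Energy Estimation-1,2,m-1}) the limiting energy would be at least $c^\infty_{\{1,\dots,q\}}+\sum_{h>q}l_h$, which exceeds the upper bound $c^\infty$ by the quantitative margin $\delta$, a contradiction. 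Your proposal does not reproduce this energy comparison, and without it the nontriviality claim and the strong $H^1$ convergence are both unsupported.

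A smaller point: in (A) you say that $\mathbf{u}\in\mathcal{N}^\infty$ already lies in each $\mathcal{N}^n$. That is not quite right because the defining condition of $\mathcal{N}^\infty$ is an \emph{inequality} $\|\mathbf{u}_h\|_h^2\le\sum_{(i,j)\in I_h^2}\int\beta_{ij}|u_i|^p|u_j|^p$, not the Nehari equality. One has to scale each block by a factor $t_h\le 1$ to land on $\mathcal{N}^n$, exactly as in the paper's Lemma~\ref{relationship}; this is easily repaired but worth stating correctly since it is what gives the upper bound $\limsup c^n\le c^\infty$ that drives the whole proof.
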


The proof of Theorem \ref{Phase Separation} is inspired by that of \cite[Theorem 1.4]{Zou 2012}, although  the method used therein \cite{Zou 2012} cannot be used here directly due to the presence of multi-components, and we need some crucial modifications for our proof (see Theorem \ref{Energy Estimates}, Theorem \ref{Energy Estimation-1,2,m-1} and Lemma \ref{relationship} below).

\begin{remark}\label{2-9}
For any $p$ and under the assumption that solutions of \eqref{S-system} are uniformly bounded in $L^{\infty}(\Omega)$, in \cite{STTZ 2016}, the authors have proved that the above theorem holds true. In \cite{Soave 2015}, the author proves this uniform bound in the subcritical case $N\leq3$, where the compactness of $H^{1}_{0}(\Omega)\hookrightarrow L^{4}(\Omega)$ plays a key role.  In order to prove the result in the critical case, our main contribution is the proof that all limiting components $\mathbf{u}_h^\infty$ are nontrivial, which allows to prove strong convergence in $H^1_0$, thus paving the way (cf. \cite[Lemma 6.1]{Zou 2012}) to the proof of the uniform $L^{\infty}$ boundedness.
\end{remark}

Using Theorem \ref{Phase Separation} in the particular situation $m=d$, we get the following.
\begin{cor}\label{Phase Separation2}
Assume that $N\geq 4, d\geq2, -\lambda_1(\Omega)<\lambda_i<0$. Let $\beta_{ij}^n<0, n\in \mathbb{N}$ and $\beta_{ij}^n\rightarrow -\infty$ when $i\neq j$, and let $(u_1^n,\ldots, u_d^n)$ be a least energy positive solution with $\beta_{ij}=\beta_{ij}^n$. Then, passing to a subsequence we have
\begin{equation*}
u_i^n\rightarrow u_i^\infty \text{ strongly in } H^1_0(\Omega), ~~i=1,\ldots,d, \quad \lim_{n\rightarrow\infty}\int_{\Omega}\beta_{ij}^n|u_i^n|^p |u_j^n|^p= 0, ~\text{ for every } i\neq j,
\end{equation*}
and
\begin{equation*}
u_i^\infty\cdot u_j^\infty\equiv 0 ~\text{ for every } i\neq j,
\end{equation*}
where $u_i^\infty\in C^{0,1}(\overline{\Omega})$ and $\overline{\Omega}=\bigcup_{i=1}^d\overline{\{u_i^\infty>0\}}$.
Moreover, for every $i=1,\ldots,d$, $\{u_i^\infty>0\}$ is a connected domain, and $u_i^\infty$ is a least energy positive solution of
\begin{equation*}
-\Delta u+\lambda_iu=\beta_{ii}|u|^{2^*-2}u, \quad u\in H^1_0(\{u_i^\infty>0\}).
\end{equation*}
\end{cor}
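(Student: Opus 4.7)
The plan is to specialize Theorem~\ref{Phase Separation} to the case $m=d$, where each group $I_h=\{h\}$ is a singleton, and then upgrade its conclusions by two short replacement arguments exploiting the explicit structure of $\mathcal{N}^\infty$.

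First, I would check that the hypotheses are met. With $I_h=\{h\}$, condition \eqref{Conditions for the same group} reduces to $\beta_{hh}\int_\Omega v_h^{2p}>0$, which is automatic from $\beta_{hh}>0$. The Nehari set becomes $\mathcal{N}=\{\mathbf{u}\in\mathbf{H}:u_i\not\equiv 0 \text{ and } \partial_i J(\mathbf{u})u_i=0 \text{ for every } i\}$, on which every positive solution of \eqref{S-system} already lies; hence the level $c$ coincides with the least energy positive level. Thus any least energy positive solution $\mathbf{u}^n$ (which exists by Case (2) of Theorem~\ref{thm:leps}) is a nonnegative minimizer on $\mathcal{N}$, and Theorem~\ref{Phase Separation} delivers, along a subsequence, strong $H^1_0$ convergence, vanishing of the mixed integrals, disjointness $u_i^\infty u_j^\infty\equiv 0$ (built into $\mathcal{N}^\infty$), nontriviality $u_i^\infty\not\equiv 0$, Lipschitz regularity, and the covering $\overline{\Omega}=\bigcup_i\overline{\{u_i^\infty>0\}}$. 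Moreover, for $I_h=\{h\}$ equation \eqref{limit equation-4} becomes exactly the Br\'ezis--Nirenberg equation $-\Delta u_i^\infty+\lambda_i u_i^\infty=\beta_{ii}(u_i^\infty)^{2^*-1}$ in $\omega_i:=\{u_i^\infty>0\}$, and by continuity $u_i^\infty$ vanishes on $\partial\omega_i$, so $u_i^\infty$ is a positive solution of the Br\'ezis--Nirenberg problem on $\omega_i$.

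Next, I would prove connectedness of $\omega_i$ by an energy truncation. Suppose for contradiction that $\omega_i=A\sqcup B$ with $A,B$ disjoint nonempty open sets (connected components of $\omega_i$). Since $u_i^\infty$ is Lipschitz on $\overline{\Omega}$ and vanishes on $\partial A\subset\overline{\Omega}\setminus\omega_i$, the truncation $\widetilde{u}_i:=u_i^\infty\chi_A$ lies in $H^1_0(\Omega)$. Testing the equation on $A$ against $\widetilde{u}_i$ yields $\|\widetilde{u}_i\|_i^2=\beta_{ii}\int_\Omega\widetilde{u}_i^{2p}>0$, and $\widetilde{u}_i$ is supported in $\omega_i$, hence still disjoint from $u_j^\infty$ for $j\neq i$. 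Therefore $(u_1^\infty,\ldots,\widetilde{u}_i,\ldots,u_d^\infty)\in\mathcal{N}^\infty$; but its $J^\infty$-energy is strictly smaller than $c^\infty$, because $u_i^\infty$ has a nontrivial part on $B$. This contradicts $c^\infty=\inf_{\mathcal{N}^\infty}J^\infty$, so $\omega_i$ is connected.

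Finally, the least energy property follows from the same swap. Let $v\in H^1_0(\omega_i)$ be any positive solution of the Br\'ezis--Nirenberg problem on $\omega_i$, extended by zero to $\Omega$. Then $v$ is supported in $\omega_i$ and satisfies $\|v\|_i^2=\beta_{ii}\int_\Omega v^{2p}$, so $(u_1^\infty,\ldots,v,\ldots,u_d^\infty)\in\mathcal{N}^\infty$; minimality of $(u_1^\infty,\ldots,u_d^\infty)$ gives $\|u_i^\infty\|_i^2\leq\|v\|_i^2$, and since the energy of any positive Br\'ezis--Nirenberg solution equals $\tfrac{1}{N}\|\cdot\|_i^2$, this is precisely the least energy statement. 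The main obstacle is the truncation step of the connectedness argument: it relies crucially on the Lipschitz regularity granted by Theorem~\ref{Phase Separation} (so that $u_i^\infty\chi_A\in H^1_0(\Omega)$) and on the equality form of the Nehari constraint inherited from $u_i^\infty$ solving the Br\'ezis--Nirenberg equation on $\omega_i$.
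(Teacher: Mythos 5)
Your proof is correct and rests on the same underlying idea as the paper's: a \emph{replacement argument} in the Nehari-type sets, where one swaps a single component of the minimizer and compares energies. The difference is one of organization and which Nehari object you compare against. The paper proves the least energy property \emph{first}, by taking $v_i$ to be a least energy positive Br\'ezis--Nirenberg solution on $\{u_i^\infty>0\}$ for each $i$, noting $(v_1,\ldots,v_d)\in\mathcal{N}^n$ for every $n$ (the mixed terms vanish by disjointness of supports), and squeezing $\lim_n c^n\leq\sum_i J(0,\ldots,v_i,\ldots,0)\leq\sum_i J(0,\ldots,u_i^\infty,\ldots,0)=\lim_n c^n$ to conclude that all the inequalities are equalities; it then proves connectedness afterwards, using the least energy property of $u_1^\infty$ just established (restrictions $\widetilde u_1,\widetilde u_2$ to two supposed components both satisfy the Nehari identity, so each has energy $\geq J(u_1^\infty,0,\ldots,0)$, contradicting additivity). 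You instead prove connectedness \emph{first} by replacing $u_i^\infty$ with its restriction to one component and observing the new $d$-tuple still lies in $\mathcal{N}^\infty$ with strictly smaller $J^\infty$, contradicting $J^\infty(\mathbf{u}^\infty)=c^\infty$ from Theorem~\ref{Phase Separation}; you then get the least energy property by the same swap with an arbitrary positive solution $v$. Your route is self-contained in the $\mathcal{N}^\infty$ picture and avoids taking limits along $\mathcal{N}^n$ a second time, which is a small economy; the paper's route is more in keeping with its general theme of comparing $c^n$ and $c^\infty$. Both are complete. Two points worth noting: (i) your appeal to Lipschitz regularity in the truncation step is correct but slightly stronger than needed (continuity of $u_i^\infty$ with $A$ an open component of $\{u_i^\infty>0\}$ already gives $u_i^\infty\chi_A\in H^1_0(\Omega)$, which is what the paper implicitly uses); (ii) when you write ``the energy of any positive Br\'ezis--Nirenberg solution equals $\tfrac1N\|\cdot\|_i^2$,'' you should remember to invoke the Nehari identity for $v$, exactly as you did for $\widetilde u_i$, to make this explicit.
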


\begin{remark}\label{5.1}
Corollary \ref{Phase Separation2} improves previous results from the literature. It allows to answer a question left open in \cite[Remark 1.4(i)]{Zou 2015}, where the authors dealt with the $d=m=2$ equation case: whether or not the limiting configuration was fully nontrivial. So far this was known to be the case only for  $N\geq 9$ and $\beta_{ij}\equiv \beta$ for every $i\neq j$ (see \cite[Theorem 1.3]{Wu Y-Z 2017}). We have shown the answer is always positive in the general case.
\end{remark}

It is well known that the limiting profile of the Schr\"{o}dinger system is related to the sign-changing solutions of the corresponding elliptic equations (see \cite{Zou 2015,Terracini-Verzini 2009,WT 2008}).
 As an application of Corollary \ref{Phase Separation2}, we can obtain the existence of a least energy sign-changing solution to the Br\'ezis-Nirenberg problem also in some lower-dimensions:
\begin{equation}\label{B-N problem}
-\Delta u+ \lambda u=\mu |u|^{2^*-2}u, \quad u\in H^1_0(\Omega),
\end{equation}
where $\mu>0, -\lambda_1(\Omega)<\lambda<0$ and $N\geq 4$. Here, a sign-changing solution $u$ is called a \emph{least energy sign-changing solution} if $u$ attains the minimal functional energy among all sign-changing solutions of \eqref{B-N problem}.
Then we have
\begin{thm}\label{Sign-changing solutions}
Assume that $N\geq 4$, $d=2$, $\lambda_1=\lambda_2=\lambda\in (-\lambda_1(\Omega),0)$, and $\mu_1=\mu_2=\mu>0$. Let $(u_1^\infty, u_2^\infty)$ be as in Corollary \ref{Phase Separation2} for $d=2$. Then $u_1^\infty - u_2^\infty$ is a least energy sign-changing solution of \eqref{B-N problem} and has two nodal domains.
\end{thm}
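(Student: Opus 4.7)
Set $w := u_1^\infty - u_2^\infty \in H^1_0(\Omega)$. By Corollary \ref{Phase Separation2}, $u_1^\infty, u_2^\infty$ are nonnegative, nontrivial, and $u_1^\infty u_2^\infty \equiv 0$, so $w^+ = u_1^\infty$, $w^- = u_2^\infty$, $w$ is sign-changing, and since each $\{u_i^\infty > 0\}$ is connected, $w$ has exactly two nodal domains, namely $\{u_1^\infty>0\}$ and $\{u_2^\infty>0\}$.

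Write $\|u\|^2 := \int_\Omega (|\nabla u|^2 + \lambda u^2)$ (a norm on $H^1_0(\Omega)$ as $\lambda > -\lambda_1(\Omega)$) and consider the Br\'ezis--Nirenberg energy $E(v) := \tfrac12\|v\|^2 - \tfrac{\mu}{2^*}\int_\Omega |v|^{2^*}$ together with the sign-changing Nehari-type set $\mathcal{M}^{sc} := \{v \in H^1_0(\Omega) : v^\pm \not\equiv 0 \text{ and } \|v^\pm\|^2 = \mu\int_\Omega (v^\pm)^{2^*}\}$. In the symmetric setting $\lambda_1=\lambda_2=\lambda$, $\beta_{11}=\beta_{22}=\mu$, the map $(u_1,u_2) \mapsto u_1 - u_2$ is a bijection between the set of pairs with $u_i\geq 0$, $u_i \not\equiv 0$, $u_1 u_2 \equiv 0$ and $\|u_i\|^2 = \mu \int u_i^{2^*}$, and $\mathcal{M}^{sc}$. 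Under it, $J^\infty(\mathbf u) = \tfrac{1}{N}(\|u_1\|^2+\|u_2\|^2) = E(u_1-u_2)$; and a standard scaling $u_i \mapsto s_i u_i$ on $\mathcal{N}^\infty$ shows that $c^\infty = \inf_{\mathcal{M}^{sc}} E$. By Corollary \ref{Phase Separation2}, $w$ attains this infimum.

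The heart of the proof is to show that $w$ is a critical point of $E$ on $H^1_0(\Omega)$, i.e.\ a weak solution of \eqref{B-N problem}. Fix $\varphi \in H^1_0(\Omega)$; for $|\epsilon|$ small, $(w+\epsilon\varphi)^\pm \not\equiv 0$, and one uniquely chooses $\sigma(\epsilon), \tau(\epsilon)>0$ with $\sigma(0)=\tau(0)=1$ via $\sigma^{2^*-2}\|(w+\epsilon\varphi)^+\|^2 = \mu\int((w+\epsilon\varphi)^+)^{2^*}$ (analogously for $\tau$), so that $\Psi(\epsilon) := \sigma(\epsilon)(w+\epsilon\varphi)^+ - \tau(\epsilon)(w+\epsilon\varphi)^- \in \mathcal{M}^{sc}$. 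By minimality, $E(\Psi(\epsilon)) \geq E(w)$ with equality at $\epsilon=0$, so $\tfrac{d}{d\epsilon}|_{\epsilon=0} E(\Psi(\epsilon))=0$. The $\sigma'(0), \tau'(0)$ contributions vanish thanks to the Nehari identities $\langle E'(w), w^\pm\rangle = \|w^\pm\|^2 - \mu\int (w^\pm)^{2^*} = 0$. Combined with the directional derivatives $\partial_\epsilon (w+\epsilon\varphi)^+|_{\epsilon=0} = \varphi\chi_{\{w>0\}}$, $\partial_\epsilon(w+\epsilon\varphi)^-|_{\epsilon=0} = -\varphi\chi_{\{w<0\}}$ and the measure-zero property $|\{w=0\}|=0$ (from unique continuation on each connected component $\{u_i^\infty>0\}$ together with the covering $\overline\Omega = \overline{\{u_1^\infty>0\}}\cup \overline{\{u_2^\infty>0\}}$), the remaining contribution equals $\langle E'(w), \varphi\rangle$. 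Thus $\langle E'(w), \varphi\rangle=0$ for all $\varphi \in H^1_0(\Omega)$, so $w$ solves \eqref{B-N problem}.

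Finally, for any sign-changing solution $v$ of \eqref{B-N problem}, testing against $v^\pm \in H^1_0(\Omega)$ gives $\|v^\pm\|^2 = \mu\int (v^\pm)^{2^*}$, hence $v \in \mathcal{M}^{sc}$ and $E(v) \geq c^\infty = E(w)$, which yields the least-energy property. The main technical obstacle is the critical-point step: since $v \mapsto v^\pm$ is only directionally (not Fr\'echet) differentiable, the differentiation of $\epsilon \mapsto E(\Psi(\epsilon))$ at $\epsilon=0$ must be justified carefully, and this is where the measure-zero property of $\{w=0\}$ provided by the phase-separation structure plays the essential role.
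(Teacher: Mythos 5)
Your overall scheme mirrors the paper's: identify $c^\infty$ with $\inf_{\mathcal{M}^{sc}} E$ (the paper calls this set $\mathcal{M}$ and the functional $I$), observe that $w=u_1^\infty-u_2^\infty$ attains the infimum and has exactly two nodal domains by the connectedness of each $\{u_i^\infty>0\}$ from Corollary~\ref{Phase Separation2}, and conclude the least-energy property by noting that any sign-changing solution $v$ has $v^\pm$ on the single-equation Nehari manifold. Your route to $I(w)\le a$ is by scaling on $\mathcal{N}^\infty$, whereas the paper passes through $c^n\le J^n(u^+,u^-)=I(u)$ for $u\in\mathcal{M}$ and then $\lim_n c^n=I(w)$; both are fine.

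The genuine gap is in the critical-point step, which you rightly identify as the main obstacle but do not close. You propose to project $(w+\epsilon\varphi)^\pm$ back onto $\mathcal{M}^{sc}$ and differentiate $\epsilon\mapsto E(\Psi(\epsilon))$ at $\epsilon=0$, invoking $|\{w=0\}|=0$ and attributing this to ``unique continuation on each connected component $\{u_i^\infty>0\}$ together with the covering $\overline\Omega=\overline{\{u_1^\infty>0\}}\cup\overline{\{u_2^\infty>0\}}$.'' This justification does not hold up: unique continuation inside $\{u_i^\infty>0\}$ is vacuous (each $u_i^\infty$ is strictly positive there), and the covering only yields $\{w=0\}\cap\Omega\subseteq\partial\{u_1^\infty>0\}\cup\partial\{u_2^\infty>0\}$, i.e.\ it locates the nodal set on the free boundary; it says nothing about its Lebesgue measure. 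Boundaries of open sets can have positive measure, and to conclude $|\{w=0\}|=0$ here one needs free-boundary regularity for the segregated limit profile (of Caffarelli--Lin / Tavares--Terracini type), which is a nontrivial input not supplied by the ingredients you cite. Even granting measure zero, the chain rule for $\epsilon\mapsto E(\Psi(\epsilon))$ is delicate because $v\mapsto v^\pm$ is not Fr\'echet differentiable on $H^1_0(\Omega)$.

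The paper avoids both issues by invoking the standard quantitative deformation argument for minimizers on a sign-changing Nehari-type set (citing~\cite[Lemma~5.2]{Zou 2015}; cf.\ also Cerami--Solimini--Struwe~\cite{Cerami-Solimini-Struwe 1986}): one uses that $(s,t)\mapsto E(sw^+-tw^-)$ has a strict global maximum at $(1,1)$ and then, assuming $E'(w)\ne 0$, deforms a neighborhood of $w$ to lower the energy and reprojects onto $\mathcal{M}^{sc}$ by a degree argument, reaching a contradiction. That route requires neither differentiating $v\mapsto v^\pm$ nor any information about $|\{w=0\}|$, and is the argument you should substitute for Step~3 of your plan.
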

\begin{remark}\label{5.2}
The existence of sign-changing solution to the Br\'ezis-Nirenberg problem \eqref{B-N problem} has been studied in \cite{Cerami-Solimini-Struwe 1986,Zou 2015,He-He-Zhang 2020,He-Zhang-Wu-Liang 2018,Schechter-Zou} with $N\geq 4$ for a general domain. For some symmetric domains, see \cite{Atkinson-Brezis-Peletier,Castro-Clapp 2003,Clapp-Weth 2005}. We mention in \cite{Cerami-Solimini-Struwe 1986,Zou 2015} the authors proved the existence of least energy sign-changing solutions for $N\geq 6$. However, there are few results considering the  lower-dimensional situations $(N=4,5)$ in the literature. Recently, Roselli and Willem \cite{Roselli-Willem 2009} proved that \eqref{B-N problem} has a least energy sign-changing solution for $N=5$ with $\lambda\in (-\lambda_1(\Omega),-\overline{\lambda})$, for some $\overline{\lambda}\in(0,\lambda_1(\Omega))$. Here, we improve and extend this result to the case $N\geq 4$.
\end{remark}

\subsection{Structure of the paper}
In Section \ref{sec2}, we study some sub-systems, establish crucial energy estimates and present the proof of Theorem \ref{Theorem-1}. In Section \ref{sec5}, we study asymptotic behaviors of the nonnegative solutions as $\beta_{ij}\to 0$ or $\beta_{ij}^n\to -\infty$ for every $(i,j)\in \mathcal{K}_2$, proving in particular Theorem \ref{Phase Separation}, Corollary \ref{Phase Separation2} and Theorem \ref{Sign-changing solutions}. Section \ref{sec6} is devoted the proofs of Theorem \ref{thm:leps}. In Section \ref{sec8}, we consider the case of two groups and prove Theorem \ref{existence-positive-two groups2}.

\subsection{Further notation}\label{subsec1.4}

\begin{itemize}
  \item The $L^{p}(\Omega)$ norms will be denoted by $|\cdot|_{p}$, $1\leq p\leq \infty$.
  \item Let
  \begin{equation*}\label{eq:def_of_S}
  S:= \inf_{i=1,\ldots,d}\inf_{u\in H^{1}_{0}(\Omega)\setminus \{0\}}\frac{\|u\|^{2}_{i}}{|u|^{2}_{2p}}>0,\quad \text{ so that }\quad   S|u|^{2}_{2p}\leq \|u\|^{2}_{i}\leq \int_{\Omega}|\nabla u|^2 \ \forall u \in H^{1}_{0}(\Omega).
  \end{equation*}

  \item For $1\leq k\leq d$ and $\mathbf{u}=(u_{1},\cdots, u_{k})$, denote $|\nabla \mathbf{u}|^{2}:=\sum_{i=1}^{k}|\nabla u_{i}|^{2}$ and $|\mathbf{u}|^2:=\sum_{i=1}^k|u_{i}|^2$.
  \end{itemize}
  Take $\mathbf{a}=(a_1,\ldots, a_m)$ an $m$-decomposition of $d$, for some integer $m\in [1,d]$.
  \begin{itemize}

    \item Given $\mathbf{u}=(u_1,\ldots, u_d)\in H^{1}_{0}(\Omega; \mathbb{R}^{d})$ we set
    \begin{equation*}
    \mathbf{u}_{h}:=\left(u_{a_{h-1}+1},\ldots,u_{a_{h}}\right)\qquad \text{ for $h=1,\ldots,m$}.
    \end{equation*}
    This way, we have $\mathbf{u}=(\mathbf{u}_1,\ldots, \mathbf{u}_m)$ and $H^1_0(\Omega;\R^d)= \prod_{h=2}^{m} H^1_0(\Omega)^{a_{h}-a_{h-1}}$.  Each space $(H^{1}_{0}(\Omega))^{a_{h}-a_{h-1}}$ is naturally endowed with the following scalar product and norm
    \begin{equation*}
    \langle\mathbf{u}^{1},\mathbf{u}^{2}\rangle_{h}:=\sum_{i\in I_{h}}\langle u^{1}_{i},u^{2}_{i}\rangle_{i} \text{ and }
    \|\mathbf{u}\|^{2}_{h}:=\langle\mathbf{u},\mathbf{u}\rangle_{h}.
    \end{equation*}
    Sometimes we will use the notation $\mathbf{H}= H^{1}_{0}(\Omega; \R^d)$, with norm $\|\mathbf{u}\|^{2}=\sum_{h=1}^{m}\|\mathbf{u}_{h}\|_{h}^{2}$.

    \item Let $\Gamma\subseteq \{1,\ldots,m\}$ and set $d_\Gamma:=|\cup_{k\in\Gamma}I_k|$. For $\mathbf{u}\in H^{1}_{0}(\Omega, \mathbb{R}^{d_\Gamma})$, define the $|\Gamma|\times |\Gamma|$ matrix
    \begin{equation}\label{Matrix-Def-2}
    M_{B}^\Gamma(\mathbf{u})=(  M_{B}^\Gamma(\mathbf{u})_{hk})_{h,k\in \Gamma}:= \left( \sum_{(i,j)\in I_{h}\times I_{k} }\int_{\Omega}\beta_{ij}|u_{i}|^{p}|u_{j}|^{p}\right)_{h, k\in \Gamma}.
    \end{equation}
When $\Gamma=\{1,\ldots,m\}$, we simply denote $M_{B}(\mathbf{u}):=M_{B}^\Gamma(\mathbf{u})$ and $M_{B}(\mathbf{u})_{hk}:= M_{B}^\Gamma(\mathbf{u})_{hk}$
  \end{itemize}

\section{\bf Sub-systems, estimates and proof of Theorem \ref{Theorem-1}}\label{sec2}
In this section we study the ground state level of sub-systems and present some preliminary results, which are used to prove Theorem \ref{Theorem-1} at the end of this section. From now on, we take $\mathbf{a}=(a_1,\ldots, a_m)$, an $m$-decomposition of $d$ for some integer $m\in [1,d]$. In this way, we fix the sets $\mathcal{K}_1$ and $\mathcal{K}_2$. Throughout this section we always assume  \eqref{eq:coefficients}--\eqref{eq:generalcondition_beta} with $N\geq 5$, hence we ommit the assumtions from the upcoming statements.
\subsection{Sub-systems in the whole space}
Fix $h\in \{1,\ldots, m\}$. Inspired by \cite{TS 2019}, we consider the following sub-system
\begin{equation}\label{sub-system}
\begin{cases}
-\Delta v_{i}=\sum_{j \in I_{h}}\beta_{ij}|v_{j}|^{p}|v_{i}|^{p-2}v_{i}  ~\text{ in } \mathbb{R}^{N},\\
v_{i}\in \mathcal{D}^{1,2}(\mathbb{R}^{N})  \quad \forall i\in I_{h}.
\end{cases}
\end{equation}
When $m=d=1$, \eqref{sub-system} is reduced to the following limiting equation
\begin{equation*}\label{eq:single_critical}
-\Delta u=|u|^{2^*-2}u \qquad \text{ in } \R^N,
\end{equation*}
whose positive solutions in $\mathcal{D}^{1,2}(\R^N)$ are given by
\begin{equation}\label{Class-2}
U_{\sigma,y}(x):=\left(N(N-2)\right)^{\frac{N-2}{4}}\left(\frac{\sigma}{\sigma^{2}+|x-y|^{2}}\right)^{\frac{N-2}{2}},\qquad \sigma>0,\ y\in \R^N.
\end{equation}
Set $\mathbb{D}_h:=(\mathcal{D}^{1,2}(\mathbb{R}^{N}))^{a_{h}-a_{h-1}}$ with the norm $\|\mathbf{u}\|_{\mathbb{D}_h}:=\left(\sum_{i\in I_{h}}\int_{\mathbb{R}^{N}}|\nabla u_{i}|^{2}\right)^{\frac{1}{2}}$.

Define the functional
\begin{equation*}
E_{h}(\mathbf{v}):=\int_{\mathbb{R}^{N}}\left(\frac{1}{2}\sum_{i\in I_{h}}|\nabla v_{i}|^{2}-\frac{1}{2p}\sum_{(i,j)\in I_{h}^{2}}
\beta_{ij}|v_{j}|^{p}|v_{i}|^{p}\right),
\end{equation*}
and the energy level
\begin{equation}\label{eq:Ground State-2}
l_{h}:= \inf_{\mathcal{M}_{h}}E_{h},\quad \text{ with } \quad  \mathcal{M}_{h}:=\Big\{\mathbf{v}\in \mathbb{D}_h:\mathbf{v}\neq \mathbf{0} \text{ and }
\langle\nabla E_{h}(\mathbf{v}),\mathbf{v}\rangle=0 \Big\}.
\end{equation}
 It is standard to prove that $l_{h}>0$, and that
\begin{align*}\label{Vector Sobolev Inequality-1}
 l_{h}  = \inf_{\mathbf{v}\in\mathbb{D}_h\backslash \{\mathbf{0}\}}\max_{t>0}E_{h}(t\mathbf{v})
    = \inf_{\mathbf{v}\in\mathbb{D}_h\backslash \{\mathbf{0}\}}\frac{1}{N}\left(\frac{\int_{\mathbb{R}^{N}}\sum_{i\in I_{h}}|\nabla v_{i}|^{2}}{\left(\int_{\mathbb{R}^{N}}\sum_{(i,j)\in I_{h}^{2}}
\beta_{ij}|v_{j}|^{p}|v_{i}|^{p}\right)^{\frac{2}{2^*}}}\right)^{\frac{N}{2}}.
\end{align*}
Therefore we get the following vector Sobolev inequality
\begin{equation*}\label{Vector Sobolev Inequality}
\sum_{(i,j)\in I_{h}^{2}}\int_{\mathbb{R}^{N}}\beta_{ij}|v_{j}|^{p}|v_{i}|^{p}\leq \left(Nl_h\right)^{\frac{2}{2-N}} \left(\sum_{i\in I_{h}}\int_{\mathbb{R}^{N}}|\nabla v_{i}|^{2}\right)^{\frac{2^*}{2}},~\forall \mathbf{v}\in \mathbb{D}_h,
\end{equation*}
which is important to study system \eqref{S-system}.

Finally, consider $f_h: \mathbb{R}^{|I_{h}|}\mapsto \mathbb{R}$ defined by $f_h(x_{a_{h-1}+1}, \cdots, x_{a_h})=\sum_{(i,j)\in I_h}\beta_{ij}|x_{i}|^{p}|x_{j}|^{p}$,
and denote by $\mathcal{X}_h$ the set of solutions to the maximization problem
\begin{equation}\label{Class-1}
f_h(X_{0})=f^h_{max}:=\max_{|X|=1}f_h(X),\quad |X_{0}|=1.
\end{equation}
Then we have the following theorem
\begin{thm}\label{limit system-6-1}
 The level $l_{h}$ is attained by a vector $\mathbf{V}_{h}\in \mathcal{M}_h$. Moreover, any of such minimizers has the form $\mathbf{V}_{h}=X_{0}(f^h_{max})^{-\frac{N-2}{4}}U_{\sigma,y}$, where $X_{0} \in \mathcal{X}_h$, $y\in \R^N$, $\sigma>0$.
\end{thm}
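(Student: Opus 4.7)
The plan is to compute $l_h$ explicitly as
\begin{equation*}
l_h = \frac{1}{N}\,S^{N/2}\,(f^h_{max})^{-(N-2)/2}
\end{equation*}
by matching a sharp lower bound with the value of $E_h$ on an explicit test vector, and then to use the equality cases of the inequalities used in the lower bound to pin down the form of every minimizer.

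For the lower bound I would chain three ingredients. First, the pointwise homogeneity estimate
\begin{equation*}
f_h(\mathbf{v}(x)) \leq f^h_{max}\,|\mathbf{v}(x)|^{2p},
\end{equation*}
coming from the definition \eqref{Class-1} and the $(2p)$-homogeneity of $f_h$. Second, the Kato-type pointwise identity
\begin{equation*}
|\nabla \mathbf{v}|^{2} \;=\; |\nabla|\mathbf{v}||^{2} + |\mathbf{v}|^{2}\,\bigl|\nabla\bigl(\mathbf{v}/|\mathbf{v}|\bigr)\bigr|^{2} \quad \text{on } \{|\mathbf{v}|>0\},
\end{equation*}
obtained by writing $\mathbf{v}=|\mathbf{v}|\,X$ with the orthogonality relation $X\cdot \partial_\alpha X=0$. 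Third, the classical scalar Sobolev inequality applied to $|\mathbf{v}|\in \mathcal{D}^{1,2}(\mathbb{R}^{N})$. Inserting these three estimates into the variational formula for $l_h$ displayed just before \eqref{Class-1}, the powers of $\int_{\mathbb{R}^N}|\mathbf{v}|^{2^*}$ cancel exactly and one obtains $l_h\geq \tfrac{1}{N}\,S^{N/2}\,(f^h_{max})^{-(N-2)/2}$.

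For the matching upper bound and existence, first note that $f^h_{max}>0$ (by \eqref{Conditions for the same group} applied to $\mathbf{v}=\phi X$ with $\phi\in H^1_0(\Omega)$ positive and $|X|=1$) and that $\mathcal{X}_h\neq \emptyset$ by compactness of the unit sphere and continuity of $f_h$. Given any $X_{0}\in \mathcal{X}_h$ and any Aubin--Talenti bubble $U_{\sigma,y}$ from \eqref{Class-2}, I would verify that
\begin{equation*}
\mathbf{V}_h := (f^h_{max})^{-(N-2)/4}\,X_{0}\,U_{\sigma,y}
\end{equation*}
lies in $\mathcal{M}_h$: the Nehari identity amounts to $c^{2}\,|X_0|^2\,S^{N/2}=c^{2p}\,f^h_{max}\,S^{N/2}$, solved by $c=(f^h_{max})^{-1/(2p-2)}=(f^h_{max})^{-(N-2)/4}$ thanks to $2p-2=4/(N-2)$ and $\int_{\mathbb{R}^N}|\nabla U_{\sigma,y}|^{2}=\int_{\mathbb{R}^N} U_{\sigma,y}^{2^*}=S^{N/2}$. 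A direct computation using $\tfrac{1}{2}-\tfrac{1}{2p}=\tfrac{1}{N}$ then gives $E_{h}(\mathbf{V}_h)=\tfrac{1}{N}\,S^{N/2}\,(f^h_{max})^{-(N-2)/2}$, which matches the lower bound.

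For the characterization, let $\mathbf{v}\in\mathcal{M}_h$ attain $l_h$. Then each of the three inequalities in the lower bound must be an equality: the equality case of the scalar Sobolev inequality applied to the nonnegative function $|\mathbf{v}|$ forces $|\mathbf{v}|=c_{1}\,U_{\sigma,y}$ for some $c_{1}>0$, $\sigma>0$, $y\in\mathbb{R}^N$, and in particular $|\mathbf{v}|>0$ everywhere; equality in the Kato identity then yields $\nabla(\mathbf{v}/|\mathbf{v}|)\equiv 0$, whence $\mathbf{v}/|\mathbf{v}|\equiv X_{0}$ for some fixed unit vector $X_{0}\in \mathbb{R}^{|I_h|}$; and pointwise equality in the homogeneity estimate gives $X_{0}\in\mathcal{X}_h$. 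The Nehari condition finally fixes $c_{1}=(f^h_{max})^{-(N-2)/4}$, which proves the claim. The main obstacle is the equality case of the vector-valued Kato inequality; it is cleanly handled by the pointwise decomposition of $|\nabla\mathbf{v}|^{2}$ above, but this decomposition is only meaningful on $\{|\mathbf{v}|>0\}$, which is why the Sobolev step has to be run first so that we know a priori that $|\mathbf{v}|$ never vanishes.
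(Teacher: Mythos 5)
Your proposal is correct. The paper omits the proof, referring instead to Theorem 1.9 of \cite{TS 2019}; the argument there follows the same standard strategy you outline (pointwise homogeneity bound $f_h(\mathbf{v})\leq f^h_{max}|\mathbf{v}|^{2p}$, a Kato/diamagnetic comparison $|\nabla|\mathbf{v}||\leq|\nabla\mathbf{v}|$, the scalar Sobolev inequality for $|\mathbf{v}|$, and matching with the explicit test vector $X_0(f^h_{max})^{-(N-2)/4}U_{\sigma,y}$, then reading off the equality cases), so your approach is essentially the same as the paper's.
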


\begin{proof}[\bf Proof of Theorem \ref{limit system-6-1}.]
The proof follows exactly as the one of Theorem 1.9 in \cite{TS 2019}, and so we omit it.
\end{proof}

\subsection{ \bf Preliminary results for subsystems in bounded domains: lower bounds and Palais-Smale conditions}

In this subsection we present some preliminary results, including the construction of a Palais-Smale sequence at level $c$.

Firstly, we consider an analogue of the level $c$ where we just consider some components of the $m$ groups in which we divided $\{1,\ldots,d\}$. Given $\emptyset \neq \Gamma\subseteq \{1,\ldots,m\}$, we define
\begin{equation}\label{Minimizer-4}
c_{\Gamma}=\inf_{\mathbf{u}\in\mathcal{N}_{\Gamma}}J_{\Gamma}(\mathbf{u}),\quad \text{ where } \quad J_\Gamma(\mathbf{u}):=\frac{1}{2}\sum_{k\in \Gamma}\|\mathbf{u}_{k}\|_{k}^{2}-\frac{1}{2p}\sum_{k,l\in \Gamma}\sum_{(i,j)\in I_k\times I_l}\int_{\Omega}\beta_{ij}|u_{i}|^{p}|u_{j}|^{p}
\end{equation}
and
\begin{align*}\label{Minfold-4}
\mathcal{N}_{\Gamma}:=
\Bigg\{(\mathbf{u}_h)_{h\in \Gamma}:    \mathbf{u}_{h} \neq \mathbf{0},\ \sum_{i\in I_{h}}
\partial_{i}J_\Gamma(\mathbf{u})u_{i}=0, \text{ for every } h\in \Gamma
\Bigg\}.
\end{align*}
Observe that $c=c_{\{1,\ldots,m\}}$.

\begin{lemma}\label{Bounded}
Let $\emptyset \neq \Gamma\subseteq \{1,\ldots, m\}$. Then there exists $C_{1}>0$ such that for any $\mathbf{u}\in \mathcal{N}_\Gamma$ we have
\begin{equation*}\label{Bounded-2}
\sum_{i\in I_{h}}|u_{i}|_{2p}^{2}\geq C_{1} \text{ for every } h\in \Gamma,
\end{equation*}
where $C_{1}$ depends on $(\beta_{ij})_{(i,j)
\in  \mathcal{K}_{1}}$, $(\beta_{ii})_i$ and $(\lambda_{i})_i$, but does not depend on $\Gamma$ nor on $(\beta_{ij})_{(i,j)\in \mathcal{K}_2}$.
\end{lemma}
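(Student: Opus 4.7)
The plan is to exploit the Nehari identity satisfied by $\mathbf{u}$, use the sign assumption \eqref{eq:generalcondition_beta} to discard all inter-group contributions, and then close the estimate with Sobolev and H\"older, using crucially that $p>1$ to extract a positive lower bound. First I would fix $h \in \Gamma$ and write out $\sum_{i \in I_h} \partial_i J_\Gamma(\mathbf{u}) u_i = 0$ explicitly, splitting the interaction terms according to whether the pairs $(i,j)$ lie in $I_h^2$ or in some $I_h \times I_k$ with $k \in \Gamma \setminus \{h\}$:
\[
\|\mathbf{u}_h\|_h^2 = \sum_{(i,j) \in I_h^2} \beta_{ij} \int_\Omega |u_i|^p |u_j|^p + \sum_{\substack{k \in \Gamma \\ k \neq h}} \sum_{(i,j) \in I_h \times I_k} \beta_{ij} \int_\Omega |u_i|^p |u_j|^p.
\]
All pairs in the second sum belong to $\mathcal{K}_2$, where $\beta_{ij} \leq 0$, so that double sum is non-positive and may be dropped, leaving $\|\mathbf{u}_h\|_h^2 \leq \sum_{(i,j) \in I_h^2} \beta_{ij} \int_\Omega |u_i|^p |u_j|^p$. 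This is the single point at which \eqref{eq:generalcondition_beta} is used, and it is what produces the claimed independence of $C_1$ from $\Gamma$ and from the coefficients indexed by $\mathcal{K}_2$.

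Next I would bound the left-hand side from below via the Sobolev constant, $\|\mathbf{u}_h\|_h^2 \geq S \sum_{i \in I_h} |u_i|_{2p}^2$, and the right-hand side from above via H\"older combined with a power-mean step. With $B_h := \max_{(i,j) \in I_h^2} |\beta_{ij}|$, H\"older yields $\int_\Omega |u_i|^p |u_j|^p \leq |u_i|_{2p}^p |u_j|_{2p}^p$, and then Cauchy--Schwarz together with the inclusion $\ell^p(\mathbb{R}^{|I_h|}) \hookrightarrow \ell^1(\mathbb{R}^{|I_h|})$ (valid for $p \geq 1$) give
\[
\sum_{(i,j) \in I_h^2} \beta_{ij} \int_\Omega |u_i|^p |u_j|^p \leq B_h \Big(\sum_{i \in I_h} |u_i|_{2p}^p\Big)^{\!2} \leq B_h\, |I_h|\, \Big(\sum_{i \in I_h} |u_i|_{2p}^2\Big)^{\!p}.
\]
Putting the two estimates together yields $S \sum_{i\in I_h} |u_i|_{2p}^2 \leq B_h |I_h| \bigl(\sum_{i\in I_h} |u_i|_{2p}^2\bigr)^p$. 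Since $\mathbf{u}_h \not\equiv \mathbf{0}$ by definition of $\mathcal{N}_\Gamma$, the quantity $\sum_{i \in I_h} |u_i|_{2p}^2$ is strictly positive, and because $p > 1$ we may divide by it and take the $1/(p-1)$ power to obtain
\[
\sum_{i \in I_h} |u_i|_{2p}^2 \geq \Big(\frac{S}{B_h\, |I_h|}\Big)^{\!1/(p-1)}.
\]
Choosing $C_1$ as the minimum of these quantities over $h = 1, \ldots, m$ gives a constant depending only on $(\lambda_i)_i$ (through $S$), the cardinalities $|I_h|$, and the intra-group coefficients $\beta_{ij}$ for $(i,j) \in I_h^2$ (that is, on $\mathcal{K}_1$ and the diagonal), exactly as stated.

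The main point --- rather than a technical obstacle --- is the essential role of \eqref{eq:generalcondition_beta}: once the inter-group terms are dropped there are no further surprises, but without that sign assumption those terms could overwhelm the intra-group contribution and no uniform lower bound of this type would be available. This is the concrete manifestation of the phenomenon mentioned in Remark \ref{rem:differencesN}, where the restriction to non-positive $\beta_{ij}$ in $\mathcal{K}_2$ in the higher-dimensional regime $N \geq 5$ (so that $1 < p < 2$) is what makes this argument --- and the subsequent analysis --- go through.
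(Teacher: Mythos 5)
Your proof is correct and follows essentially the same route as the paper: start from the Nehari identity for group $h$, drop the inter-group terms using $\beta_{ij}\le 0$ on $\mathcal{K}_2$, bound $\|\mathbf{u}_h\|_h^2$ below by $S\sum_{i\in I_h}|u_i|_{2p}^2$, bound the intra-group interaction above by a power of $\sum_{i\in I_h}|u_i|_{2p}^2$, and then use $p>1$ to divide and solve. The only cosmetic difference is that the paper closes the upper bound via the pointwise AM--GM inequality $|u_i|^p|u_j|^p\le\tfrac12(|u_i|^{2p}+|u_j|^{2p})$ with the positive parts $\beta_{ij}^+$, whereas you use H\"older plus a finite-dimensional $\ell^q$-comparison with $B_h=\max|\beta_{ij}|$; both yield a constant of the same form depending only on $S$, $|I_h|$, and the intra-group coefficients.
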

\begin{proof}[\bf{Proof}]
Note that $\beta_{ij}\leq 0$ for every $(i,j)\in \mathcal{K}_{2}$. Then for any $\mathbf{u}\in \mathcal{N}_\Gamma$ and for any $h\in \Gamma$ we have
\begin{align*}
 S\sum_{i\in I_{h}}|u_{i}|_{2p}^{2}& \leq\sum_{i\in I_{h}}\|u_{i}\|_{i}^{2} \leq \sum_{(i,j)\in I_h^2} \int_{\Omega}\beta_{ij}|u_{i}|^p|u_{j}|^p \leq \frac{1}{2}\sum_{(i,j)\in I_h^2} \int_{\Omega}\beta_{ij}^{+}(|u_{i}|^{2p}+|u_{j}|^{2p}) \\
   &\leq \max_{(i,j)\in I_h^2}\{\beta_{ij}^{+}\}\sum_{i\in I_h}|u_{i}|^{2p}_{2p}\leq d\max_{(i,j)\in I_h^2}\{\beta_{ij}^{+}\}\left(\sum_{i\in I_h}|u_{i}|^2_{2p}\right)^p,
\end{align*}
which implies that
\begin{equation*}
\sum_{i\in I_{h}}|u_{i}|_{2p}^{2}\geq  \left(\frac{S}{ \displaystyle d\max_{(i,j)\in I_h^2}\{\beta_{ij}^{+}\}}\right)^{\frac{1}{p-1}}\geq \min_{1\leq h \leq m}\left\{\left(\frac{S}{ \displaystyle d\max_{(i,j)\in I_h^2}\{\beta_{ij}^{+}\}}\right)^{\frac{1}{p-1}}\right\}=:C_1. \qedhere
\end{equation*}
\end{proof}

Inspired by Proposition 1.2 in \cite{Soave 2015}, we have the following
\begin{lemma}\label{Lagrange}
Let $\emptyset \neq \Gamma\subseteq \{1,\ldots, m\}$ and assume that $c_\Gamma$ is achieved by $\mathbf{u}\in \mathcal{N}_\Gamma$. Then $\mathcal{N}_\Gamma$ is a regular manifold, and $\mathbf{u}$ is a critical point of $J_\Gamma$ with at least $|\Gamma|$ nontrivial components.
\end{lemma}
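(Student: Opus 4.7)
The plan is a constrained-critical-point argument. For $h\in\Gamma$, write
\begin{equation*}
G_h(\mathbf{u}):=\sum_{i\in I_h}\partial_iJ_\Gamma(\mathbf{u})u_i=\|\mathbf{u}_h\|_h^2-\sum_{l\in\Gamma}a_{hl}(\mathbf{u}),\qquad a_{hl}(\mathbf{u}):=\sum_{(i,j)\in I_h\times I_l}\int_\Omega\beta_{ij}|u_i|^p|u_j|^p,
\end{equation*}
so that $\mathcal{N}_\Gamma=\{\mathbf{u}:\mathbf{u}_h\neq\mathbf{0}\text{ and }G_h(\mathbf{u})=0\ \forall h\in\Gamma\}$. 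Since $1<p<2$, both $J_\Gamma$ and each $G_h$ are of class $C^1$ on $\mathbf{H}$ (though not $C^2$). The main task is to show that the $|\Gamma|$ differentials $\bigl(dG_h(\mathbf{u})\bigr)_{h\in\Gamma}$ are linearly independent at every point of $\mathcal{N}_\Gamma$; this will simultaneously give the $C^1$-manifold structure (of codimension $|\Gamma|$) and enable a Lagrange multiplier argument at the minimizer $\mathbf{u}$.

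To verify independence I will compute the $|\Gamma|\times|\Gamma|$ matrix $A(\mathbf{u})=(A_{hk})_{h,k\in\Gamma}$ defined by $A_{hk}:=dG_h(\mathbf{u})[\widetilde{\mathbf{u}}_k]$, where $\widetilde{\mathbf{u}}_k$ is the vector that coincides with $\mathbf{u}$ on $I_k$ and is zero outside $I_k$. A direct derivative computation yields
\begin{equation*}
A_{hh}=(2-p)\|\mathbf{u}_h\|_h^2-p\,a_{hh}(\mathbf{u}),\qquad A_{hk}=-p\,a_{hk}(\mathbf{u})\quad(h\neq k),
\end{equation*}
and, using the constraint $G_h(\mathbf{u})=0$, one finds the convenient row-sum identity $\sum_{k\in\Gamma}A_{hk}=(2-2p)\|\mathbf{u}_h\|_h^2<0$ (this is precisely where $p>1$, i.e.\ $N\geq 5$, is used). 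Hypothesis \eqref{Conditions for the same group} forces $a_{hh}(\mathbf{u})>0$, hence combined with $\|\mathbf{u}_h\|_h^2\le a_{hh}(\mathbf{u})$ one obtains $A_{hh}<0$; hypothesis \eqref{eq:generalcondition_beta} gives $a_{hk}(\mathbf{u})\leq 0$ for $h\neq k$, so $A_{hk}\geq 0$ for $h\neq k$. Therefore
\begin{equation*}
|A_{hh}|-\sum_{k\in\Gamma,\,k\neq h}|A_{hk}|=-\sum_{k\in\Gamma}A_{hk}=(2p-2)\|\mathbf{u}_h\|_h^2>0,
\end{equation*}
so $A(\mathbf{u})$ is strictly diagonally dominant, hence invertible. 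In particular the $dG_h(\mathbf{u})$ are linearly independent and $\mathcal{N}_\Gamma$ is a $C^1$ submanifold.

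By the Lagrange multiplier rule, the minimizer $\mathbf{u}$ satisfies $dJ_\Gamma(\mathbf{u})=\sum_{h\in\Gamma}\mu_h\,dG_h(\mathbf{u})$ for some scalars $(\mu_h)_{h\in\Gamma}$. Testing this identity successively with $\widetilde{\mathbf{u}}_k$ for each $k\in\Gamma$, and noting $dJ_\Gamma(\mathbf{u})[\widetilde{\mathbf{u}}_k]=G_k(\mathbf{u})=0$, yields the homogeneous linear system $A(\mathbf{u})^T\mu=0$; invertibility of $A(\mathbf{u})$ forces $\mu_h=0$ for every $h$, so $dJ_\Gamma(\mathbf{u})=0$ in $\mathbf{H}^{-1}$. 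Finally, the constraint $\mathbf{u}_h\neq\mathbf{0}$ for every $h\in\Gamma$ together with the disjointness of the index sets $I_h$ guarantees at least one nontrivial component in each of the $|\Gamma|$ groups, hence at least $|\Gamma|$ nontrivial components in total. The main obstacle will be the careful bookkeeping in deriving the structural formulas for $A(\mathbf{u})$ and checking its strict diagonal dominance; once this is in hand the rest is a standard Lagrange multiplier routine.
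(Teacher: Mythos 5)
Your proof is correct and follows essentially the same route as the paper's: both arguments evaluate the constraint differentials against the block vectors built from $\mathbf{u}$ to form an $|\Gamma|\times|\Gamma|$ matrix, then use the constraints, $\beta_{ij}\le 0$ on $\mathcal{K}_2$, and $p>1$ to show that matrix is strictly diagonally dominant (hence invertible), and finally invoke Lagrange multipliers and the same matrix to kill the multipliers. The only cosmetic difference is a sign convention — the paper tests against $(-t_1\mathbf{u}_1,\ldots,-t_m\mathbf{u}_m)$, producing the negative of your matrix $A(\mathbf{u})$ (which they call $\widetilde{\mathbf{B}}$, with positive diagonal) — but the diagonal-dominance computation $|A_{hh}|-\sum_{k\neq h}|A_{hk}|=(2p-2)\|\mathbf{u}_h\|_h^2>0$ is identical.
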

\begin{proof}[\bf{Proof}]
Without loss of generality, we will present the proof for the case of $\Gamma=\{1,\ldots, m\}$. Define
\begin{equation}\label{Func-1}
\Psi_{k}(\mathbf{u}):= \|\mathbf{u}_{k}\|_{k}^{2}-\sum_{h=1}^m\sum_{(i,j)\in I_{k}\times I_{h}}\int_{\Omega}\beta_{ij}|u_{i}|^{p}|u_{j}|^{p}.
\end{equation}
By a direct computation, for every $\overrightarrow{\psi}=(\overrightarrow{\psi}_1,\cdots, \overrightarrow{\psi}_h,\cdots, \overrightarrow{\psi}_m)\in \mathbf{H}$ we obtain that
\begin{equation}\label{Matrix-def-7}
\langle \Psi'_{k}(\mathbf{u}), \overrightarrow{\psi}\rangle =2\langle\mathbf{u}_k, \overrightarrow{\psi}_k \rangle
-p\sum_{h=1}^m\sum_{(i,j)\in I_k\times I_h}\int_{\Omega}\beta_{ij}\left(|u_i|^p|u_j|^{p-2}u_j\psi_j+|u_j|^p|u_i|^{p-2}u_i\psi_i\right).
\end{equation}
We claim that the set $\mathcal{N}$ defines a smooth manifold of codimension $m$ in a neighbourhood of $\mathbf{u}$ in $\mathbf{H}$. To verify this, we shall prove that
\begin{equation*}
\widehat{\Psi}_{\mathbf{u}}: \mathbf{H}\rightarrow \mathbb{R}^m,
\end{equation*}
is a surjective operator, where
\begin{equation*}
\widehat{\Psi}_{\mathbf{u}}(\mathbf{v}):= ( \Psi'_{1}(\mathbf{u})\mathbf{v},\cdots, \Psi'_{m}(\mathbf{u})\mathbf{v}),~ \mathbf{v}\in \mathbf{H}.
\end{equation*}
 Since $\mathbf{u}\in \mathcal{N}$, then we see that
\begin{equation}\label{Matrix-def-9}
M_{B}(\mathbf{u})_{kk}+\sum_{h\neq k}M_{B}(\mathbf{u})_{kh}=\|\mathbf{u}_{k}\|_{k}^{2},
\end{equation}
where $M_{B}(\mathbf{u})_{kh}$ is defined in \eqref{Matrix-Def-2}.
For any $(t_1,\ldots, t_m)\in \mathbb{R}^m$, by \eqref{Matrix-def-7} and \eqref{Matrix-def-9} we have
\begin{align*}
\langle \Psi'_{k}(\mathbf{u}), (-t_1\mathbf{u}_1,\ldots, -t_m\mathbf{u}_m) \rangle &=\left(-2\|\mathbf{u}_k\|_k^2+2pM_{B}(\mathbf{u})_{kk}+p\sum_{h\neq k}M_{B}(\mathbf{u})_{kh}\right)t_k+p\sum_{h\neq k}M_{B}(\mathbf{u})_{kh}t_h \\
&= \left((2p-2)M_{B}(\mathbf{u})_{kk}+(p-2)\sum_{h\neq k}M_{B}(\mathbf{u})_{kh}\right)t_k+p\sum_{h\neq k}M_{B}(\mathbf{u})_{kh}t_h.
\end{align*}
Define
\begin{equation*}
b_{kk}:=\left[(2p-2)M_{B}(\mathbf{u})_{kk}+(p-2)\sum_{h\neq k}M_{B}(\mathbf{u})_{kh}\right], ~~b_{kh}:= M_{B}(\mathbf{u})_{kh},
\end{equation*}
and
\begin{equation}\label{Matrix-def-2}
\widetilde{\mathbf{B}}:= (b_{kh})_{1\leq k,h\leq m}.
\end{equation}
Hence,
\begin{equation}\label{Matrix-def-10}
(\langle \Psi'_{1}(\mathbf{u}), (-t_1\mathbf{u}_1,\ldots, -t_m\mathbf{u}_m) \rangle,\ldots,\langle \Psi'_{m}(\mathbf{u}), (-t_1\mathbf{u}_1,\ldots, -t_m\mathbf{u}_m) \rangle)^T=\widetilde{\mathbf{B}}\mathbf{t},
\end{equation}
where $\mathbf{t}=(t_1,\ldots,t_m)^T$.
Note that $\beta_{ij}\leq 0, (i,j)\in \mathcal{K}_2$, so that $|M_B(\mathbf{u})_{kh}|=-M_B(\mathbf{u})_{kh}$ for $k\neq h$. Then we deduce from Lemma \ref{Bounded} that
\begin{align*}
b_{kk}-\sum_{h\neq k}|b_{kh}| &= \Bigg[(2p-2)M_{B}(\mathbf{u})_{kk}+(p-2)\sum_{h\neq k}M_{B}(\mathbf{u})_{kh}\Bigg] -p\sum_{h\neq k}\left|M_{B}(\mathbf{u})_{kh}\right|  \\
 & =(2p-2)\Bigg[M_{B}(\mathbf{u})_{kk}+\sum_{h\neq k}M_{B}(\mathbf{u})_{kh}\Bigg]\\
 & =(2p-2)\|\mathbf{u}_k\|_k^2\geq (2p-2)S\sum_{i\in I_k}|u_i|_{2^*}^2\geq (2p-2)SC_{1}.
\end{align*}
Therefore, for every $k=1,\ldots,m$ we see that
\begin{align}\label{Matrix-Positive-1}
b_{kk}-\sum_{h\neq k}|b_{hk}|\geq (2p-2)SC_{1},
\end{align}
where $C_{1}$ is defined in Lemma \ref{Bounded}. It follows from \eqref{Matrix-Positive-1} and $p>1$ that the matrix $\widetilde{\mathbf{B}}$ is strictly diagonally dominant and so $\widetilde{\mathbf{B}}$ is positive definite. Hence, the matrix $\widetilde{\mathbf{B}}$ is non-singular. Thus, for any $\mathbf{h}=(h_1,\ldots,h_m)\in \mathbb{R}^m$, by \eqref{Matrix-def-10} there exists $\overline{\mathbf{t}}=(\overline{t},\ldots,\overline{t}_m)$ such that
\begin{equation*}\label{Matrix-def-11}
(\langle \Psi'_{1}(\mathbf{u}), (-\overline{t}_1\mathbf{u}_1,\ldots, -\overline{t}_m\mathbf{u}_m) \rangle,\ldots,\langle \Psi'_{m}(\mathbf{u}), (-\overline{t}_1\mathbf{u}_1,\ldots, -\overline{t}_m\mathbf{u}_m) \rangle)=\mathbf{h},
\end{equation*}
where $\overline{\mathbf{t}}^T:= \widetilde{\mathbf{B}}^{-1}\mathbf{h}$.
Therefore, the claim is true.

Since the set $\mathcal{N}$ defines a smooth manifold of codimension $m$ in a neighbourhood of $\mathbf{u}$ in $\mathbf{H}$, then by the Lagrange multipliers rule there exist $\mu_1,\ldots, \mu_m\in \mathbb{R}$ such that
\begin{equation}\label{equation-4}
 J'(\mathbf{u})-\sum_{k=1}^m\mu_k \Psi_{k}'(\mathbf{u})=0.
\end{equation}
Note that $J'(\mathbf{u})\mathbf{u}_k=\Psi_{k}(\mathbf{u})=0$, then test \eqref{equation-4} with $(\mathbf{0},\cdots, \mathbf{u}_k, \cdots, \mathbf{0})$, for every $k=1,\ldots,m$, by \eqref{Matrix-def-7} we get that
\begin{equation}\label{Matrix-def-4}
\left((2p-2)M_{B}(\mathbf{u})_{kk}+(p-2)\sum_{h\neq k}M_{B}(\mathbf{u})_{kh}\right)\mu_k+p\sum_{h\neq k}M_{B}(\mathbf{u})_{kh}\mu_h=0.
\end{equation}
Recall that the matrix $\widetilde{\mathbf{B}}$ is defined in \eqref{Matrix-def-2}. We deduce from \eqref{Matrix-def-4} that $\widetilde{\mathbf{B}} \boldsymbol{\mu}=0$, where $\boldsymbol{\mu}=(\mu_1,\cdots,\mu_m)^T$. From the above arguments, we know that the matrix $\widetilde{\mathbf{B}}$ is non-singular, then $\mathbf{A}=\mathbf{0}$ and $\mu_1=\cdots=\mu_m=0$. Combining this with \eqref{equation-4} we have $ J'(\mathbf{u})=0$.
\end{proof}

Finally, we construct a Palais-Smale sequence at level $c_\Gamma$.
\begin{prop}[Existence of a Palais-Smale sequence]\label{PS Sequence}
Let $\emptyset \neq \Gamma\subseteq\{1,\ldots, m\}$. Then there exists a sequence $\{\mathbf{u}_{n}\}\subset\mathcal{N}_\Gamma$ satisfying
\begin{equation*}\label{Inequality16-1}
\lim_{n\rightarrow\infty}J_\Gamma(\mathbf{u}_{n})=c_\Gamma,\quad \lim_{n\rightarrow\infty}J_\Gamma'(\mathbf{u}_{n})=0.
\end{equation*}
\end{prop}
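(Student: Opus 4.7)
The plan is to apply Ekeland's variational principle to the restriction of $J_\Gamma$ to the $C^1$ manifold $\mathcal{N}_\Gamma$ and then convert the constrained Palais--Smale sequence into an unconstrained one via a uniform Lagrange multiplier argument, analogous to the last paragraph of the proof of Lemma \ref{Lagrange}. For simplicity we assume $\Gamma=\{1,\ldots,m\}$; the general case is identical.

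\smallbreak

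\textbf{Step 1: manifold structure and coercivity.} First I would verify that $\mathcal{N}_\Gamma$ is a complete $C^1$ Finsler manifold of codimension $|\Gamma|$. The same linear algebra performed in Lemma \ref{Lagrange} applies verbatim at every $\mathbf{u}\in\mathcal{N}_\Gamma$: the matrix $\widetilde{\mathbf{B}}(\mathbf{u})$ defined in \eqref{Matrix-def-2} is strictly diagonally dominant by \eqref{Matrix-Positive-1}, hence invertible, so the map $\widehat{\Psi}_{\mathbf{u}}$ is surjective and $\mathcal{N}_\Gamma$ is a smooth submanifold around each of its points. Completeness is ensured by the lower bound of Lemma \ref{Bounded} (which keeps $\mathcal{N}_\Gamma$ away from $\mathbf{0}$) together with the fact that $\mathcal{N}_\Gamma$ is closed in the complement of a fixed neighborhood of the origin. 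Next, for any $\mathbf{u}\in\mathcal{N}_\Gamma$ one has
\begin{equation*}
J_\Gamma(\mathbf{u})=J_\Gamma(\mathbf{u})-\frac{1}{2p}\sum_{k\in\Gamma}\Psi_k(\mathbf{u})=\frac{1}{N}\sum_{k\in\Gamma}\|\mathbf{u}_k\|_k^2\geq 0,
\end{equation*}
so that $c_\Gamma\geq 0$, and by the Sobolev inequality combined with Lemma \ref{Bounded} one in fact has $c_\Gamma>0$.

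\smallbreak

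\textbf{Step 2: Ekeland on $\mathcal{N}_\Gamma$.} I would apply Ekeland's variational principle (for $C^1$ functionals on complete $C^1$ Finsler manifolds) to $J_\Gamma|_{\mathcal{N}_\Gamma}$ to produce a sequence $\{\mathbf{u}_n\}\subset\mathcal{N}_\Gamma$ with $J_\Gamma(\mathbf{u}_n)\to c_\Gamma$ and $\|(J_\Gamma|_{\mathcal{N}_\Gamma})'(\mathbf{u}_n)\|_*\to 0$. The computation in Step 1 also gives $\|\mathbf{u}_n\|^2=N c_\Gamma+o(1)$, so $\{\mathbf{u}_n\}$ is bounded in $\mathbf{H}$.

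\smallbreak

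\textbf{Step 3: Lagrange multipliers and the main obstacle.} By the abstract Lagrange multiplier rule on a Finsler manifold there exist $\boldsymbol{\mu}_n=(\mu_{1,n},\ldots,\mu_{m,n})\in\R^m$ such that
\begin{equation*}
J'_\Gamma(\mathbf{u}_n)-\sum_{k\in\Gamma}\mu_{k,n}\Psi'_k(\mathbf{u}_n)=\varepsilon_n\to 0\quad\text{in }\mathbf{H}^{-1}.
\end{equation*}
The main obstacle is showing $\boldsymbol{\mu}_n\to 0$. Testing the above identity with $(\mathbf{0},\ldots,\mathbf{u}_{k,n},\ldots,\mathbf{0})$ and using that $\Psi_k(\mathbf{u}_n)=0$, exactly as in the derivation of \eqref{Matrix-def-4}, one obtains the linear system
\begin{equation*}
\widetilde{\mathbf{B}}(\mathbf{u}_n)\,\boldsymbol{\mu}_n=\mathbf{o}(1),
\end{equation*}
where the right hand side is $\varepsilon_n(\mathbf{u}_{k,n})_{k}$ and is $o(1)$ because $\|\mathbf{u}_n\|$ is bounded. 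The key point is that the lower bound of Lemma \ref{Bounded} is uniform in $\mathbf{u}\in\mathcal{N}_\Gamma$, so the estimate \eqref{Matrix-Positive-1} gives
\begin{equation*}
b_{kk}(\mathbf{u}_n)-\sum_{h\neq k}|b_{kh}(\mathbf{u}_n)|\geq (2p-2)SC_1>0
\end{equation*}
uniformly in $n$. Hence $\widetilde{\mathbf{B}}(\mathbf{u}_n)$ is uniformly strictly diagonally dominant, and its inverse is uniformly bounded. This forces $\boldsymbol{\mu}_n\to 0$, which combined with the boundedness of $\Psi'_k(\mathbf{u}_n)$ in $\mathbf{H}^{-1}$ (since $\{\mathbf{u}_n\}$ is bounded in $\mathbf{H}$) yields $J'_\Gamma(\mathbf{u}_n)\to 0$ in $\mathbf{H}^{-1}$, as required. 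The delicate issue, and the one to watch most carefully, is precisely this uniformity of the diagonal dominance bound, which is the reason why the uniform lower bound of Lemma \ref{Bounded} was proved independently of $\Gamma$ and of the $\beta_{ij}$ with $(i,j)\in\mathcal{K}_2$.
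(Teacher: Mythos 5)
Your proposal is correct and follows essentially the same route as the paper: Ekeland's variational principle on the closed $C^1$ manifold $\mathcal{N}_\Gamma$, boundedness of the resulting sequence, the Lagrange multiplier system tested against $(\mathbf{0},\ldots,\mathbf{u}_{k,n},\ldots,\mathbf{0})$, and the uniform strict diagonal dominance of $\widetilde{\mathbf{B}}(\mathbf{u}_n)$ from Lemma \ref{Bounded} to force $\boldsymbol{\mu}_n\to 0$. The only cosmetic difference is that the paper passes to the limiting matrix $\widehat{\mathbf{B}}$ (after a subsequence) and inverts $\widehat{\mathbf{B}}+o(1)$, whereas you invert $\widetilde{\mathbf{B}}(\mathbf{u}_n)$ directly using the uniform bound on its inverse; this is the same estimate, packaged slightly differently.
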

\begin{proof}[\bf{Proof}]
Without loss of generality, we will present the proof for the case of $\Gamma=\{1,\ldots, m\}$.
Notice that $J$ is coercive and bounded from below on $\mathcal{N}$. Then by the Ekeland variational principle (which we can use, since $\mathcal{N}$ is a closed manifold by Lemmas \ref{Bounded} and \ref{Lagrange}), there exists a minimizing sequence $\{\mathbf{u}_{n}\}\subset\mathcal{N}$ satisfying
\begin{equation}\label{Inequality14}
J(\mathbf{u}_{n})\rightarrow c,\quad
J'(\mathbf{u}_{n})-\sum_{k=1}^m\lambda_{k,n}\Psi'_{k}(\mathbf{u}_{n})=o(1), \text{ as } n\rightarrow \infty,
\end{equation}
where the function $\Psi_{k}$ is defined in \eqref{Func-1}.
Obviously, $\{u^{n}_i\}$ is uniformly bounded in $H^1_0(\Omega)$. We suppose that, up to a subsequence,
\begin{equation*}
u_{i}^{n}\rightharpoonup u_{i} \text{ weakly in } H^{1}_{0}(\Omega), \quad
u_{i}^{n}\rightharpoonup u_{i} \text{ weakly in } L^{2^*}(\Omega).
\end{equation*}
Consider $\overline{\mathbf{u}}^k_n$ defined by
\[
\overline{u}^k_{i,n}:=
\begin{cases}
u_{i}^n ~~\text{ if } i\in I_k,\\
0 ~~\text{ if } i\not\in I_k.
\end{cases}
\]
Testing the second equation in \eqref{Inequality14}, for every $k=1,\ldots,m$ we have
\begin{align}\label{Matrix-4}
\left(2\|\mathbf{u}_k^n\|_k^2-2pM_{B}(\mathbf{u}^n)_{kk}-p\sum_{h\neq k}M_{B}(\mathbf{u}^n)_{kh}\right)\lambda_{k,n} -
p\sum_{h\neq k}M_{B}(\mathbf{u}^n)_{kh}\lambda_{h,n}=o(1).
\end{align}
Since $\Psi_{k}(\overline{\mathbf{u}}^k_n)=0$, then we know that
\[
\|\mathbf{u}_k^n\|_k^2=M_{B}(\mathbf{u}^n)_{kk}+\sum_{h\neq k}M_{B}(\mathbf{u}^n)_{kh}.
\]
Combining this with \eqref{Matrix-4}, for every $k=1,\ldots,m$ we get that
\begin{align}\label{Matrix-5}
\left[(2p-2)M_{B}(\mathbf{u}^n)_{kk}+(p-2)\sum_{h\neq k}M_{B}(\mathbf{u}^n)_{kh}\right]\lambda_{k,n}+
p\sum_{h\neq k}M_{B}(\mathbf{u}^n)_{kh}\lambda_{h,n}=o(1).
\end{align}
Define
\begin{equation*}
\widehat{b}_{kk}:= \lim_{n\rightarrow\infty}\left[(2p-2)M_{B}(\mathbf{u}^n)_{kk}+(p-2)\sum_{h\neq k}M_{B}(\mathbf{u}^n)_{kh}\right], ~~\widehat{b}_{kh}:= \lim_{n\rightarrow\infty}M_{B}(\mathbf{u}^n)_{kh},
\end{equation*}
and
\[
\widehat{\mathbf{B}}:= (\widehat{b}_{kh})_{1\leq k,h\leq m}.
\]
Note that $\beta_{ij}\leq 0, (i,j)\in \mathcal{K}_2$ and $\mathbf{u}_n\in \mathcal{N}$. Then similarly to \eqref{Matrix-Positive-1}, for every $k=1,\ldots,m$ we get that
\begin{align*}\label{Matrix-2-7-1}
\widehat{b}_{kk}-\sum_{h\neq k}|\widehat{b}_{hk}|\geq (2p-2)SC_{1},
\end{align*}
where $C_{1}$ is defined in Lemma \ref{Bounded}.
Therefore, $\widehat{\mathbf{B}}$ is positive definite. By \eqref{Matrix-5} we see that
\begin{equation*}
o(1)= (\widehat{\mathbf{B}}+o(1))\widetilde{\mathbf{a}}_n,
\end{equation*}
where $\widetilde{\mathbf{a}}_n:= (\lambda_{k,n})_{m\times 1}$.
Combining this with the argument in \cite[Proposition 3.13]{TS 2019}, we can get that $\mathbf{u}_{n}$ is a standard Palais-Smale sequence.
\end{proof}

\subsection{Energy estimates and proof of Theorem \ref{Theorem-1}}\label{sec3.2}
In this section we show crucial energy estimates for all levels $c_\Gamma$. Then we give the proof of Theorem \ref{Theorem-1} at the end of this subsection.

The following theorem plays an important role in showing that limits of minimizing sequences are nonzero. The presence of a $\delta$ plays a crucial role in proving Theorem \ref{Phase Separation}  and Theorem \ref{thm:leps}-(4).

\begin{thm}\label{Energy Estimates}
There exists $\delta>0$, depending only on $\beta_{ij}$ for $(i,j)\in \mathcal{K}_{1}$ and $\beta_{ii}, \lambda_{i} ~i=1,\ldots,d$, such that
\begin{equation*}
c_\Gamma<\sum_{h\in \Gamma}l_{h}-\delta \leq \sum_{h=1}^ml_h \qquad \forall \emptyset\neq \Gamma\subseteq \{1,\ldots, m\},
\end{equation*}
where $c_\Gamma$ is defined in \eqref{Minimizer-4} and $l_{h}$ is defined in \eqref{eq:Ground State-2}. In particular,
\[
c<\sum_{h=1}^m l_h -\delta.
\]
\end{thm}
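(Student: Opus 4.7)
The second inequality $\sum_{h\in\Gamma}l_h-\delta\le\sum_{h=1}^m l_h$ is immediate once $\delta>0$ is fixed, since each $l_h>0$. For the first inequality, I will, for each nonempty $\Gamma\subseteq\{1,\ldots,m\}$, construct a trial function $\mathbf u^\varepsilon\in\mathcal N_\Gamma$ whose energy drops below $\sum_{h\in\Gamma}l_h$ by a quantitative amount $\delta_\Gamma>0$ that does not involve $(\beta_{ij})_{(i,j)\in\mathcal K_2}$; taking $\delta:=\min_\Gamma\delta_\Gamma$ over the (finitely many) subsets then gives the required uniform gap.

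\textbf{Construction.} Pick distinct points $y_h\in\Omega$, $h\in\Gamma$, and pairwise disjoint balls $B_{r_h}(y_h)\subset\Omega$, together with cutoffs $\varphi_h\in C_c^\infty(B_{r_h}(y_h))$, $\varphi_h\equiv 1$ on $B_{r_h/2}(y_h)$. Using Theorem \ref{limit system-6-1}, fix a minimizer $\mathbf V_h=(f^h_{max})^{-(N-2)/4}\,X_0^h\,U_{1,0}$ of $l_h$ on $\R^N$ with $X_0^h$ having nonnegative components, and define
\[
\widetilde V_{h,i}^\varepsilon(x):=\varphi_h(x)\,\varepsilon^{-(N-2)/2}\,(V_h)_i\bigl((x-y_h)/\varepsilon\bigr),\quad i\in I_h,\ \varepsilon>0.
\]
Disjointness of supports makes every inter-group product vanish: $\int_\Omega\beta_{ij}|\widetilde V_{h,i}^\varepsilon|^p|\widetilde V_{k,j}^\varepsilon|^p=0$ for $h\ne k$. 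The Nehari system defining $\mathcal N_\Gamma$ therefore decouples into $|\Gamma|$ single-group constraints
\[
t_h^{\,2p-2}=\frac{\|\widetilde{\mathbf V}_h^\varepsilon\|_h^2}{A_h^\varepsilon},\qquad A_h^\varepsilon:=\sum_{(i,j)\in I_h^2}\int_\Omega\beta_{ij}|\widetilde V_{h,i}^\varepsilon|^p|\widetilde V_{h,j}^\varepsilon|^p,
\]
whose denominators are positive for small $\varepsilon$ by \eqref{Conditions for the same group}; hence $t_h=t_h(\varepsilon)>0$ is uniquely determined and $\mathbf u^\varepsilon:=(t_h\widetilde{\mathbf V}_h^\varepsilon)_h\in\mathcal N_\Gamma$.

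\textbf{Energy expansion.} Since each $(V_h)_i$ is a nonnegative scalar multiple of the Aubin--Talenti function $U_{1,0}$, the classical Br\'ezis--Nirenberg estimates \cite{Brezis 1983} for truncated bubbles apply component-wise: for $N\ge 5$,
\[
\|\widetilde{\mathbf V}_h^\varepsilon\|_h^2=\|\mathbf V_h\|_{\mathbb D_h}^2-C_h\,\varepsilon^2+o(\varepsilon^2),\qquad A_h^\varepsilon=\sum_{(i,j)\in I_h^2}\int_{\R^N}\beta_{ij}|(V_h)_i|^p|(V_h)_j|^p+O(\varepsilon^N),
\]
where $C_h>0$: the quantity $-\lambda_i\int_\Omega(\widetilde V_{h,i}^\varepsilon)^2$ is of order $\varepsilon^2$ with a strictly positive sign (as $\lambda_i<0$), and the cutoff error $O(\varepsilon^{N-2})$ is of smaller order than $\varepsilon^2$ precisely because $N\ge 5$. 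On $\mathcal N_\Gamma$ one has $J_\Gamma\equiv\frac1N\sum_h\|\mathbf u_h\|_h^2$; combining these expansions with the Sobolev-quotient representation of $l_h$ displayed right before Theorem \ref{limit system-6-1} and Taylor-expanding yields
\[
c_\Gamma\le J_\Gamma(\mathbf u^\varepsilon)=\sum_{h\in\Gamma}l_h-\alpha_\Gamma\,\varepsilon^2+o(\varepsilon^2),\qquad \alpha_\Gamma:=\sum_{h\in\Gamma}\alpha_h>0,
\]
with each $\alpha_h$ depending only on $\beta_{ii}$, $(\beta_{ij})_{(i,j)\in\mathcal K_1}$ and $\lambda_i$ (not on $\mathcal K_2$, thanks to the disjoint supports).

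\textbf{Main obstacle and conclusion.} The crucial technical point is that the $\varepsilon^2$-gain coming from $\lambda_i<0$ must dominate the cutoff error $O(\varepsilon^{N-2})$, which requires exactly $N\ge 5$; the vectorial case reduces to the scalar one because Theorem \ref{limit system-6-1} provides $\mathbf V_h$ with the one-dimensional structure $\mathbf V_h=(f^h_{max})^{-(N-2)/4}X_0^h\,U_{1,0}$. For each $\Gamma$, fix $\varepsilon_\Gamma$ small enough that $c_\Gamma<\sum_{h\in\Gamma}l_h-\tfrac12\alpha_\Gamma\varepsilon_\Gamma^2$, set $\delta_\Gamma:=\tfrac12\alpha_\Gamma\varepsilon_\Gamma^2$, and finally $\delta:=\min_{\emptyset\ne\Gamma\subseteq\{1,\ldots,m\}}\delta_\Gamma>0$, which satisfies the stated dependencies.
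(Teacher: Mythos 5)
Your proposal is correct and follows essentially the same route as the paper: rescaled and truncated vector bubbles centered at distinct points with disjoint supports (so that all $\mathcal{K}_2$-coupling integrals vanish and the Nehari system decouples group by group), the one-dimensional form of the minimizer from Theorem \ref{limit system-6-1}, component-wise Br\'ezis--Nirenberg expansions with the crucial $\varepsilon^2$ gain from $\lambda_i<0$ dominating the $O(\varepsilon^{N-2})$ cutoff error because $N\ge5$, and finally $\delta$ determined by quantities depending only on $(\beta_{ij})_{(i,j)\in\mathcal K_1}$, $\beta_{ii}$, $\lambda_i$. The only cosmetic difference is that you project explicitly onto $\mathcal N_\Gamma$ via the decoupled one-parameter constraints and take the minimum of $\delta_\Gamma$ over the finitely many $\Gamma$, while the paper bounds $c_\Gamma$ by the global maximum of $J$ along the multi-parameter fiber and treats a representative $\Gamma$ ``without loss of generality''; both yield the same uniform gap.
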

\begin{proof}[\bf{Proof}]
Without loss of generality, we show the proof for $\Gamma=\{1,\ldots, m\}$. The following proof is similar to \cite[Theorem 3.6]{TS 2019}, thus we just sketch it here for completeness. Observe that there exist $y_{1}, ..., y_{m}\in \Omega$ and $\rho>0$ such that
$B_{2\rho}(y_{h})\subseteq \Omega$ and $B_{2\rho}(y_{h})\cap B_{2\rho}(y_{k})=\emptyset$ for $h\neq k$. Let $\xi_{h}\in C^{1}_{0}(B_{2\rho}(y_{h}))$ be such that $0\leq\xi_{h}\leq 1$ and $\xi_{h}\equiv 1$ for $|x-y_{h}|\leq\rho$, $h=1,...,m$.
We deduce from Theorem \ref{limit system-6-1} that
\begin{equation*}
\mathbf{V}_{h}=X_0(f_{max}^h)^{-\frac{N-2}{4}}U_{1,0} ~\text{is a minimizer for $l_h$},
\end{equation*}
where $X_{0}, f_{max}^h$ are defined in \eqref{Class-1} and $U_{1,0}$ is defined in \eqref{Class-2}.
Denote
\begin{equation*}
\mathbf{V}_{h}^{\varepsilon}:=\varepsilon^{-\frac{N-2}{2}}X_0(f_{max}^h)^{-\frac{N-2}{4}}U_{1,0}\left(\frac{x-y_{h}}{\varepsilon}\right),\quad
\widehat{\mathbf{V}}_{h}^{\varepsilon}:=\xi_{h}\mathbf{V}_{h}^{\varepsilon}.
\end{equation*}
Then by the proofs of \cite[Lemma 1.46]{Willem 1996} and \cite[Theorem 3.6 - eq. (3.8)]{TS 2019} we get the following inequalities (recall that here $N\geq 5$)
\begin{equation}\label{2.14}
\int_{\Omega}|\widehat{V}_{i}^{\varepsilon}|^{2}\geq C_{i}\varepsilon^{2}+O(\varepsilon^{N-2}), \text{ for every } i\in I_{h},
\end{equation}

\begin{equation*}\label{2.12}
\int_{\Omega}|\nabla \widehat{V}_{i}^{\varepsilon}|^{2}= \int_{\mathbb{R}^{N}}|\nabla V_{i}|^{2}+O(\varepsilon^{N-2}), \text{ for every } i\in I_{h},
\end{equation*}
\begin{equation}\label{2.13}
\int_{\Omega}|\widehat{V}_{i}^{\varepsilon}|^{p}|\widehat{V}_{j}^{\varepsilon}|^{p}= \int_{\mathbb{R}^{N}}| V_{i}|^{p}| V_{j}|^{p}+O(\varepsilon^{N}), \text{ for every } (i,j)\in I_{h}^{2},
\end{equation}
where
\begin{equation}\label{Constant-2-9}
C_{i}=\left(N(N-2)\right)^{\frac{N-2}{2}}(X_{0})_{i}^{2}(f_{max}^h)^{-\frac{N-2}{2}}.
\end{equation}

Recall that $\lambda_{1},\cdots,\lambda_{d}<0$ and $\beta_{ii}>0$. Note that $\int_{\Omega}|\widehat{V}_{i}^{\varepsilon}|^{p}|\widehat{V}_{j}^{\varepsilon}|^{p}dx=0$ for every $(i,j)\in \mathcal{K}_{2}$. Then we deduce from \eqref{2.14}--\eqref{2.13} and $\lambda_i<0$ that, given $t_1,\ldots, t_m>0$,
\begin{align}\label{2.15}
J \left(t_{1}\widehat{\mathbf{V}}_{1}^{\varepsilon},..., t_{m}\widehat{\mathbf{V}}_{m}^{\varepsilon}\right)\leq
 \frac{1}{2}\sum_{h=1}^{m}t_{h}^{2}\left(Nl_{h}-C^h\varepsilon^{2}+ O(\varepsilon^{N-2})\right)-\frac{1}{2^*}\sum_{h=1}^{m}t_{h}^{2^*}\left(Nl_{h}+O(\varepsilon^{N})\right),
\end{align}
where $C^h:= \sum_{i\in I_h}C_i|\lambda_{i}|=\sum_{i\in I_h}\left(N(N-2)\right)^{\frac{N-2}{2}}(X_{0})_{i}^{2}(f_{max}^h)^{-\frac{N-2}{2}}|\lambda_{i}|>0$ and $C_i$ is defined in \eqref{Constant-2-9}.
Denote
\begin{equation*}
A_{h}^{\varepsilon}:=Nl_{h}-C^h\varepsilon^{2}+ O(\varepsilon^{N-2}),\quad
B_{h}^{\varepsilon}:=Nl_{h}+O(\varepsilon^{N}).
\end{equation*}
From the fact that $N\geq 5$ it  is easy to see that
\begin{equation*}
0<A_{h}^{\varepsilon}<B_{h}^{\varepsilon}, \text{ for } \varepsilon \text{ small enough. }
\end{equation*}
Fix such an $\varepsilon$. We have
\begin{align*}\label{2.15-3}
\max_{t_{1},\ldots,t_{m}>0}J \left(t_{1}\widehat{\mathbf{V}}_{1}^{\varepsilon},..., t_{m}\widehat{\mathbf{V}}_{m}^{\varepsilon}\right)
&\leq\max_{t_{1},\ldots,t_{m}>0}\sum_{h=1}^{m}\left( \frac{1}{2}t_{h}^2A_{h}^{\varepsilon}- \frac{1}{2^*}t_{h}^{2^*}B_{h}^{\varepsilon}\right)= \frac{1}{N}\sum_{h=1}^{m}\left(\frac{A_{h}^{\varepsilon}}{B_{h}^{\varepsilon}}\right)^{\frac{2}{2^*-2}}A_{h}^{\varepsilon} \nonumber\\
&< \frac{1}{N}\sum_{h=1}^{m}A_{h}^{\varepsilon}=\sum_{h=1}^{m}\left(l_{h}-\frac{1}{N}C^h\varepsilon^{2}+ O(\varepsilon^{N-2})\right)\nonumber\\
&< \sum_{h=1}^{m}l_{h}-\delta ~\text{ for } \varepsilon \text{ small enough, }
\end{align*}
where
\begin{equation*}\label{4-p-9}
\delta:=\frac{1}{4N}\min_{1\leq h\leq m}\{C^h\}\varepsilon^{2},
\end{equation*}
$\delta$ being a positive constant, only dependent on $\beta_{ij}\geq 0$ for $(i,j)\in \mathcal{K}_{1}$ and $\beta_{ii}, \lambda_{i} ~i=1,\ldots, d$. Next, following the proof of \cite[Theorem 3.6]{TS 2019} we can get that
$c<\sum_{h=1}^{m}l_{h}-\delta$.
\end{proof}

Inspired by \cite[Theorem 3.7]{TS 2019}, we have the following result, which in the next subsection this will play an important role in proving that $c$ is achieved by a solution with $m$ nontrivial components. The presence of a $\delta$ plays a crucial role in proving Theorem \ref{Phase Separation} and Theorem \ref{thm:leps}-(4).

\begin{thm}\label{Energy Estimation-1,2,m-1}
Assume \eqref{eq:generalcondition_beta}. Given $\Gamma\subseteq\{1,\ldots, m\}$, assume that
\[
c_G \text{ is achieved by a nonnegative } \mathbf{u}_G \text{ for every } G\subsetneq \Gamma.
\]
Then
\begin{equation*}
c_\Gamma<\min\left\{  c_G + \sum_{h\in \Gamma\setminus G } l_h-\delta:\ G\subsetneq \Gamma \right\},
\end{equation*}
where $\delta$ is defined in Theorem \ref{Energy Estimates}, depending only on $\beta_{ij}$ for $(i,j)\in \mathcal{K}_{1}$ and $\beta_{ii}, \lambda_{i} ~i=1,\ldots, d$. (and not depending on $\Gamma$ nor on $\beta_{ij}$ with $(i,j)\in \mathcal{K}_2$).
\end{thm}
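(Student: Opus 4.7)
The plan is to prove the strict inequality separately for each proper subset $G \subsetneq \Gamma$ by constructing a trial vector on $\mathcal{N}_\Gamma$ that glues together the minimizer $\mathbf{u}_G$ for the groups indexed by $G$ with concentrating Aubin-Talenti profiles for the remaining groups $h \in \Gamma \setminus G$. By Lemma \ref{Lagrange}, $\mathbf{u}_G$ is a weak solution of the associated subsystem, and standard Brezis-Kato regularity for critical problems gives $|u_i|_\infty < \infty$ for every component $u_i$ of $\mathbf{u}_G$. I would pick pairwise disjoint balls $B_{2\rho}(y_h) \subseteq \Omega$ for $h \in \Gamma \setminus G$, form the cut-off vectors $\widehat{\mathbf{V}}_h^\varepsilon$ exactly as in the proof of Theorem \ref{Energy Estimates}, and set
\[
\Phi^\varepsilon := \bigl((\mathbf{u}_{G})_{h \in G},\,(\widehat{\mathbf{V}}_h^\varepsilon)_{h \in \Gamma \setminus G}\bigr).
\]

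A standard fiber-map analysis---following the approach in \cite{Soave 2015, Soave-Tavares 2016, TS 2019}, and resting on the strict diagonal dominance of the analogue of the matrix $\widetilde{\mathbf{B}}$ from Lemma \ref{Lagrange} (whose positivity relies on $1 < p < 2$, \eqref{Conditions for the same group} and \eqref{eq:generalcondition_beta})---produces a unique $(t_h^*)_{h \in \Gamma} \in (0,\infty)^{|\Gamma|}$ such that $(t_h^* \Phi_h^\varepsilon)_{h\in\Gamma} \in \mathcal{N}_\Gamma$, and this point realises the maximum of $(t_h) \mapsto J_\Gamma((t_h \Phi_h^\varepsilon))$ on the positive orthant. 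Hence $c_\Gamma \leq \max_{t_h > 0} J_\Gamma((t_h\Phi_h^\varepsilon))$, and it suffices to bound this maximum from above.

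I would then expand the energy by splitting the matrix $M_B(\Phi^\varepsilon)$ along the partition $\Gamma = G \sqcup (\Gamma\setminus G)$. The $G\times G$ block is exactly $M_B^G(\mathbf{u}_G)$; the off-diagonal entries within $\Gamma\setminus G$ vanish by the disjoint-support choice of the $\widehat{\mathbf{V}}_h^\varepsilon$; and each diagonal entry with $h \in \Gamma \setminus G$ equals $N l_h + O(\varepsilon^N)$ by \eqref{2.13} together with $\mathbf{V}_h \in \mathcal{M}_h$. The decisive new contribution is the $G \times (\Gamma\setminus G)$ cross-block, for which the key estimate is
\[
\int_\Omega |u_i|^p\, |\widehat V_{h,j}^\varepsilon|^p \leq |u_i|_\infty^p \int_{B_{2\rho}(y_h)} |\widehat V_{h,j}^\varepsilon|^p = O\bigl(\varepsilon^{N/2}|\log\varepsilon|\bigr) = o(\varepsilon^2),
\]
where $\int_\Omega |\widehat V^\varepsilon|^p = O(\varepsilon^{N/2}|\log\varepsilon|)$ follows from the identity $p(N-2)/2 = N/2$ applied to the Aubin-Talenti profile, and the final $o(\varepsilon^2)$ is precisely what the hypothesis $N \geq 5$ (i.e.\ $N/2 > 2$) supplies. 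Since $\beta_{ij} \leq 0$ for $(i,j) \in \mathcal{K}_2$, these cross-terms enter the energy with a non-negative sign and must be absorbed as errors rather than exploited as gains.

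Assembling the pieces, at the optimal $(t_h^*)$ the $G$-block contributes at most $c_G$ (because $\mathbf{u}_G \in \mathcal{N}_G$ sits at the maximum along its own Nehari fibre), each $h \in \Gamma \setminus G$ contributes at most $l_h - \tfrac{C^h}{N}\varepsilon^2 + O(\varepsilon^{N-2})$ by the Brezis-Nirenberg optimisation performed in the proof of Theorem \ref{Energy Estimates}, and the cross-terms add only $o(\varepsilon^2)$, so
\[
c_\Gamma \leq c_G + \sum_{h\in\Gamma\setminus G} l_h - \frac{1}{N}\sum_{h\in\Gamma\setminus G} C^h \varepsilon^2 + O(\varepsilon^{N-2}) + o(\varepsilon^2).
\]
For $\varepsilon$ sufficiently small (compatibly with the value fixed in the proof of Theorem \ref{Energy Estimates}) the leading negative term beats both the $O(\varepsilon^{N-2})$ and the new $o(\varepsilon^2)$ error, leaving at least $\delta = \tfrac{1}{4N}\min_h\{C^h\}\varepsilon^2$ of surplus---the same $\delta$ furnished by Theorem \ref{Energy Estimates}. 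The hardest step will be the cross-interaction estimate: it simultaneously demands a uniform $L^\infty$ bound on $\mathbf{u}_G$ via elliptic regularity for a critical system, and the sharp scaling $\int|\widehat V^\varepsilon|^p = O(\varepsilon^{N/2}|\log\varepsilon|)$, which is only $o(\varepsilon^2)$ when $N \geq 5$---another manifestation of the role played by $1<p<2$. A secondary technical point is the existence of the Nehari projection $(t_h^*)$, handled by the same strictly-diagonally-dominant matrix argument already used in Lemma \ref{Lagrange}.
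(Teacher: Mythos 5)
Your overall blueprint is sound and matches the paper's skeleton: glue $\mathbf{u}_G$ with cut-off Aubin--Talenti profiles for $h\in\Gamma\setminus G$, project onto $\mathcal{N}_\Gamma$ via an implicit-function/diagonal-dominance argument, and show that the cross-terms enter only at order $o(\varepsilon^2)$ thanks to $p(N-2)/2=N/2$ and $N\geq 5$. However, there is a genuine gap in the cross-term estimate, and it is exactly the point that carries the weight of the theorem.

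You propose to bound the cross-interaction by
\[
\int_\Omega |u_i|^p\,|\widehat V_{h,j}^\varepsilon|^p \leq |u_i|_\infty^p \int_{B_{2\rho}(y_h)} |\widehat V_{h,j}^\varepsilon|^p = o(\varepsilon^2),
\]
placing the bubbles at \emph{arbitrary} disjoint balls in $\Omega$. After multiplying by $|\beta_{ij}|$ for $(i,j)\in\mathcal{K}_2$, the resulting error is of size $\max_{\mathcal{K}_2}|\beta_{ij}|\cdot|\mathbf{u}_G|_\infty^p\cdot o(\varepsilon^2)$. Both factors in front of the $o(\varepsilon^2)$ depend on the competition parameters $\beta_{ij}$ for $(i,j)\in\mathcal{K}_2$ — the first explicitly, the second because $\mathbf{u}_G$ solves a subsystem that involves those parameters and the Brezis--Kato constant has no reason to be uniform. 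Since $\varepsilon$ is already fixed by Theorem \ref{Energy Estimates} so as to define $\delta$, this error need not be beaten by the gain $\frac{1}{N}\sum_h C^h\varepsilon^2$ once $|\beta_{ij}|$ grows. You would be forced to shrink $\varepsilon$ with $|\beta_{ij}|$, and hence $\delta$ would depend on $\beta_{ij}$, $(i,j)\in\mathcal{K}_2$ — contradicting precisely the uniformity the theorem claims. That uniformity is not cosmetic: it is used in Theorem \ref{Phase Separation} and Theorem \ref{thm:leps}-(4), where $\beta_{ij}^n\to-\infty$.

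The paper's extra idea, which your proposal omits, is to \emph{place the bubbles near} $\partial\Omega$, in a ball where the components of $\mathbf{u}_G$ are below a threshold $\Theta$ chosen as in \eqref{3-p-23} so that $\Theta^p\leq \frac{1}{\max_{\mathcal{K}_2}|\beta_{ij}|}\cdot M$, with $M$ depending only on $\lambda_i$, $\beta_{ii}$ and $(\beta_{ij})_{\mathcal{K}_1}$. This is possible because $\mathbf{u}_G\in C^2(\overline\Omega)$ vanishes on $\partial\Omega$. Then $\max_{\mathcal{K}_2}|\beta_{ij}|\cdot\Theta^p\leq M$ \emph{uniformly}, and the cross-term estimate \eqref{1,2-11-m} yields a $C_2\,o(\varepsilon^2)$ with $C_2$ independent of $(\beta_{ij})_{\mathcal{K}_2}$. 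The same $\Theta$ bound is also what makes the auxiliary matrix $M_B(\mathbf{u}^\varepsilon)$ positive definite with a uniform lower bound $\theta$ on its smallest eigenvalue (Lemma \ref{Con-4-1}), which in turn controls the location of the fiber maximum uniformly (Lemma \ref{Con-4-2}). Your implicit-function argument would also need this uniformity. So you should replace the global $L^\infty$ bound by the boundary-layer smallness of $\mathbf{u}_G$, with $\Theta$ calibrated against $\max_{\mathcal{K}_2}|\beta_{ij}|$ as in \eqref{3-p-23}; the rest of your outline then goes through.
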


\begin{remark}\label{Energy estimate-6}
Here, the presence of a $\delta$ in Theorem \ref{Energy Estimates} and Theorem \ref{Energy Estimation-1,2,m-1} will play a critical role in showing the proof of Theorem \ref{Phase Separation} and Theorem \ref{thm:leps}-(4).
\end{remark}

The following proof is inspired by that of \cite[Theorem 3.7]{TS 2019}, and we only sketch it and identify the differences. Without loss of generality, we fix $1\leq q<m$ and only prove that
\begin{equation*}\label{eq:c<c_p}
c<c_{1,\ldots,q}+\sum_{h=q+1}^{m}l_{h}-\delta
\end{equation*}
where we use the notation $c_{1,\ldots, q}$ instead of $c_{\{1,\ldots,q\}}$ for simplicity.

Assume \eqref{eq:generalcondition_beta}, which in particular implies that $\beta_{ij}\leq 0$ for every $(i,j)\in \mathcal{K}_2$. Suppose that $c_{1,\ldots,q}$ is attained by a nonnegative $\mathbf{u}^{q}=(\mathbf{u}_{1},\cdots, \mathbf{u}_{q})$. Then by Proposition \ref{Lagrange} we infer that $\mathbf{u}^{q}$ is a solution of the corresponding subsystem. By elliptic regularity we get that $u_{i}\in C^{2}(\overline{\Omega})$.
Observe that $u_{i}\equiv 0$ on $\partial\Omega$, $i\in I^{q}:=I_1\cup\ldots
\cup I_q$. Taking $x_{1}\in \partial\Omega$, there exists $\rho_{1}>0$
such that
\begin{align}\label{3-p-23}
 \Theta^{p}&:= \max_{i\in I^{q}}\sup_{x\in B_{2\rho_{1}}(x_{1})\cap \Omega}u_{i}^{p}(x)\leq \frac{1}{\displaystyle \max_{(i,j)\in \mathcal{K}_{2}}\{|\beta_{ij}|\}}\min\Bigg\{\frac{SC_1}{2d^3 } \frac{N\min\{l_1,\ldots,l_m\}}{4d^3}, ~~ \frac{1}{d\alpha_1^{p}\alpha_3^{p-2}}\Bigg\},
\end{align}
where $C_1$ is defined in Lemma \ref{Bounded}, and
\begin{equation}\label{eq:t_and_theta}
\alpha_1= \left(\frac{N^2\sum_{h=1}^ml_h}{(N-2)\theta}\right)^{\frac{1}{2^*-2}}, ~~
\alpha_3= \min_{q+1\leq h \leq m}\left\{\left(\frac{1}{2}\min_{i\in I_h}\left\{1+\frac{\lambda_i}{\lambda_1(\Omega)}\right\}\right)^{\frac{1}{2^*-2}}\right\},
\end{equation}
where
\begin{equation}\label{def-5}
\theta:= \min\left\{\frac{SC_1}{2}, \frac{N}{4}l_1, \cdots, \frac{N}{4}l_m\right\}.
\end{equation}
Therefore, there exists $\rho>0$ and $x_0\in \Omega$ such that $B_{2\rho}(x_{0})\subseteq B_{2\rho_{1}}(x_{1})\cap \Omega$, and so
\begin{equation}\label{3-p-16}
\sup_{x\in B_{2\rho}(x_{0})}u_{i}^{p}(x)\leq \Theta^{p} ~\forall i\in I^{q}.
\end{equation}

Take $x_{q+1},\cdots, x_{m}\in B_{2\rho}(x_{0})$ and $\rho_{q+1},\cdots,\rho_{m}>0$ such that $B_{2\rho_{q+1}}(x_{q+1}), \cdots, B_{2\rho_{m}}(x_{m})\subset B_{2\rho}(x_{0})$ and $B_{2\rho_i}(x_i)\cap B_{2\rho_j}(x_j)=\emptyset$ for every $i\neq j$, $i,j\in \{q+1,\ldots, m\}$. Let $\xi_{h}\in C^{1}_{0}(B_{2\rho_{h}}(x_{h}))$ be a nonnegative function with $\xi_{h}\equiv 1$ for $|x-x_{h}|\leq \rho_{h}, 0\leq\xi_{h}\leq 1, h=q+1,\ldots,m$.
Define
\begin{equation*}\label{3-p-21}
\mathbf{v}_{h}^{\varepsilon}:=\xi_{h}\mathbf{V}^{\varepsilon}_{h},
\end{equation*}
where
\begin{equation*}
\mathbf{V}_{h}^{\varepsilon}:=  \varepsilon^{-\frac{N-2}{2}}X_0(f_{max}^h)^{-\frac{N-2}{4}}U_{1,0}\left(\frac{x-x_{h}}{\varepsilon}\right), \quad \mathbf{V}_{h}=X_0(f_{max}^h)^{-\frac{N-2}{4}}U_{1,0} ~\text{ a minimizer for $l_h$}.
\end{equation*}
Similarly to \eqref{2.14}-\eqref{2.13}, we have the following inequalities
\begin{equation}\label{2.12-1,2-m}
\int_{\Omega}|\nabla v_{i}^{\varepsilon}|^{2}= \int_{\mathbb{R}^{N}}|\nabla V_{i}|^{2}+O(\varepsilon^{N-2}),\text{ for every } i\in I_{h}, ~h=q+1,\ldots,m,
\end{equation}
\begin{equation}\label{2.13-1,2-m}
\int_{\Omega}|v_{i}^{\varepsilon}|^{p}|v_{j}^{\varepsilon}|^{p}= \int_{\mathbb{R}^{N}}| V_{i}|^{p}| V_{j}|^{p}+O(\varepsilon^{N}),\text{ for every } (i,j)\in I_{h}^{2}, ~h=q+1,\ldots,m.
\end{equation}
\begin{equation*}\label{2.14-1,2-m}
\int_{\Omega}|v_{i}^{\varepsilon}|^{2}\geq C_{i}\varepsilon^{2}+O(\varepsilon^{N-2}), \text{ for every } i\in I_{h}, ~h=q+1,\ldots,m,
\end{equation*}
where $C_{i}$ is defined in \eqref{Constant-2-9}.
Moreover, we have
\begin{align}\label{Coupled term-1}
\int_{\Omega}|v_{i}^{\varepsilon}|^{p} &\leq \int_{B_{2\rho_{h}}(x_{h})}|V_{i}^{\varepsilon}|^{\frac{N}{N-2}}
= C\int_{B_{2\rho_{h}}(0)} \left(\frac{\varepsilon}{\varepsilon^2+|x|^2}\right)^{\frac{N}{2}}\nonumber\\
& =C\int_{B_{\varepsilon}(0)}\left(\frac{\varepsilon}{\varepsilon^2+|x|^2}\right)^{\frac{N}{2}}
+C\int_{B_{2\rho_{h}}(0)\backslash B_{\varepsilon}(0) }\left(\frac{\varepsilon}{\varepsilon^2+|x|^2}\right)^{\frac{N}{2}}\nonumber\\
&=C\varepsilon^{\frac{N}{2}}\int_{B_{1}(0)}
\left(\frac{1}{1+|x|^2}\right)^{\frac{N}{2}}+C\varepsilon^{\frac{N}{2}}\int_{B_{\frac{2\rho_{h}}{\varepsilon}}(0)\backslash B_{1}(0) }\left(\frac{1}{1+|x|^2}\right)^{\frac{N}{2}}\nonumber\\
& \leq C\varepsilon^{\frac{N}{2}}\left(\ln \frac{2\rho_{h}}{\varepsilon}+1\right)= C_2o(\varepsilon^2),
\end{align}
where $C_2$ is dependent only on $\beta_{ii}>0$, $\beta_{ij}$ for $(i,j)\in \mathcal{K}_{1}$.
Hence,
\begin{equation}\label{P norm bound}
\int_{\Omega}|v_{i}^{\varepsilon}|^{p}\leq 1 ~\text{ for } \varepsilon \text{ small enough, } \text{ for every } i\in I_{h}, ~h=q+1,\ldots,m.
\end{equation}

\medbreak
Consider
\begin{align}\label{3-p-4}
\Phi(t_{1},\cdots,t_{m}):&=J(t_{1}\mathbf{u}_{1},\cdots, t_{q}\mathbf{u}_{q}, t_{q+1}\mathbf{v}_{q+1}^{\varepsilon},\cdots,t_{m}\mathbf{v}_{m}^{\varepsilon})\nonumber\\
& =\frac{1}{2}\sum_{k=1}^{q}t_{k}^2\|\mathbf{u}_{k}\|_{k}^{2}
+\sum_{h=q+1}^{m}\frac{1}{2}t_{h}^2\sum_{i\in I_{h}}\int_{\Omega}|\nabla v^{\varepsilon}_{i}|^{2}+\lambda_{i}| v^{\varepsilon}_{i}|^{2}-\frac{1}{2p}M_{B}(\mathbf{u}^{\varepsilon})\mathbf{t}^p\cdot\mathbf{t}^p,
\end{align}
where
\[
\mathbf{u}^{\varepsilon}:= (\mathbf{u}_{1}^{\varepsilon},\cdots, \mathbf{u}_{m}^{\varepsilon})=(\mathbf{u}^{q}, \mathbf{v}_{q+1}^{\varepsilon},\cdots, \mathbf{v}_{m}^{\varepsilon}) \text{ and }
\mathbf{t}^p:=(t_1^p,\ldots,t_m^p)
\]

Inspired by \cite[Lemma 3.8]{TS 2019} we show that $M_{B}(\mathbf{u}^{\varepsilon})$ is positive definite.
\begin{lemma}\label{Con-4-1}
The matrix $M_{B}(\mathbf{u}^{\varepsilon})$ is strictly diagonally dominant and $M_{B}(\mathbf{u}^{\varepsilon})$ is
positive definite for small $\varepsilon$. Moreover, $\kappa_{min}\geq \theta$, where $\kappa_{min}$ is the minimum eigenvalues of $M_{B}(\mathbf{u}^{\varepsilon})$ and $\theta$ is defined in \eqref{eq:t_and_theta}.
\end{lemma}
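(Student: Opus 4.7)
The plan is to analyze the symmetric matrix $M_B(\mathbf{u}^\varepsilon)$ block-by-block according to the splitting of the indices into $\Gamma_1:=\{1,\ldots,q\}$ (rows corresponding to the components coming from $\mathbf{u}^q$) and $\Gamma_2:=\{q+1,\ldots,m\}$ (rows corresponding to the truncated bubbles $\mathbf{v}_h^\varepsilon$), show that each row is strictly diagonally dominant with margin at least $\theta$, and then conclude positive definiteness and the eigenvalue bound via Gershgorin's circle theorem.

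The upper-left $q\times q$ block coincides with $M_B(\mathbf{u}^q)$, since $u_i^\varepsilon=u_i$ for every $i\in I_h$ with $h\leq q$. Because $\mathbf{u}^q\in\mathcal{N}_{\{1,\ldots,q\}}$ and $\beta_{ij}\leq 0$ whenever $(i,j)\in\mathcal{K}_2$, the Nehari identity gives
\[
\|\mathbf{u}_h\|_h^2 = M_B(\mathbf{u}^q)_{hh} + \sum_{k\neq h,\, k\leq q}M_B(\mathbf{u}^q)_{hk} = M_B(\mathbf{u}^q)_{hh} - \sum_{k\neq h,\, k\leq q}|M_B(\mathbf{u}^q)_{hk}|,
\]
and Lemma \ref{Bounded} supplies $\|\mathbf{u}_h\|_h^2\geq SC_1$. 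The lower-right block is diagonal, because the supports of $\mathbf{v}_h^\varepsilon$ and $\mathbf{v}_k^\varepsilon$ lie in the disjoint balls $B_{2\rho_h}(x_h)$ and $B_{2\rho_k}(x_k)$ for $h\neq k$ in $\Gamma_2$; and by \eqref{2.13-1,2-m} together with the Nehari/energy identity $\sum_{(i,j)\in I_h^2}\int_{\mathbb{R}^N}\beta_{ij}|V_i|^p|V_j|^p=Nl_h$ for the minimizer $\mathbf{V}_h$ of Theorem \ref{limit system-6-1}, we get
\[
M_B(\mathbf{u}^\varepsilon)_{hh} = Nl_h + O(\varepsilon^N) \qquad \text{for } h\in\Gamma_2.
\]

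For the cross entries $h\in\Gamma_1$, $k\in\Gamma_2$, the pointwise bound $u_i\leq \Theta$ on $B_{2\rho}(x_0)$ from \eqref{3-p-16} (which contains the support of every $v_j^\varepsilon$) combined with the $L^p$-estimate $\int_\Omega|v_j^\varepsilon|^p\leq C_2\, o(\varepsilon^2)$ from \eqref{Coupled term-1} yields
\[
|M_B(\mathbf{u}^\varepsilon)_{hk}| \leq d^2\, \Theta^p \max_{(i,j)\in\mathcal{K}_2}|\beta_{ij}| \cdot C_2\, o(\varepsilon^2) = o_\varepsilon(1).
\]
Putting the three pieces together, for $\varepsilon$ small enough each row satisfies
\[
M_B(\mathbf{u}^\varepsilon)_{hh} - \sum_{k\neq h}|M_B(\mathbf{u}^\varepsilon)_{hk}| \geq
\begin{cases}
\|\mathbf{u}_h\|_h^2 - o_\varepsilon(1) \geq SC_1/2, & h\in\Gamma_1,\\[2pt]
Nl_h - O(\varepsilon^N) - o_\varepsilon(1) \geq Nl_h/2, & h\in\Gamma_2,
\end{cases}
\]
and both lower bounds are $\geq \theta$ by the definition \eqref{def-5}. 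Hence $M_B(\mathbf{u}^\varepsilon)$ is strictly diagonally dominant; since it is also symmetric (as $\beta_{ij}=\beta_{ji}$), Gershgorin's theorem forces every eigenvalue into $[\theta,\infty)$, which gives both positive definiteness and $\kappa_{\min}\geq \theta$.

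The only delicate calibration is absorbing the cross-block error into the Nehari margin of the upper-left block and into half of $Nl_h$ on the lower-right block; this is exactly why $\Theta$ was chosen in \eqref{3-p-23} with the factor $SC_1/(2d^3)$, while the remaining estimates are essentially routine consequences of the constructions of $\mathbf{v}_h^\varepsilon$ already used in Theorem \ref{Energy Estimates}.
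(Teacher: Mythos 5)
Your proposal is correct and follows essentially the same approach as the paper, which compresses the argument into a reference to \cite[Lemma 3.7]{TS 2019}: you split $M_B(\mathbf{u}^\varepsilon)$ into the $\Gamma_1$-block (controlled by the Nehari identity for $\mathbf{u}^q$ and Lemma \ref{Bounded}, giving margin $SC_1$), the diagonal $\Gamma_2$-block (equal to $Nl_h+O(\varepsilon^N)$ by the disjoint supports of the bubbles and \eqref{2.13-1,2-m}), and the cross-block (controlled by $\Theta$ and the $L^p$ smallness of $v_j^\varepsilon$), then invoke strict diagonal dominance and Gershgorin together with symmetry to get $\kappa_{\min}\geq\theta$. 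One small inconsistency in your narrative: your computation bounds the cross entries by $o_\varepsilon(1)$ using the sharp $o(\varepsilon^2)$ decay of $\int|v_j^\varepsilon|^p$, so for this lemma the smallness of $\Theta$ from \eqref{3-p-23} is not actually invoked; the explicit calibration of $\Theta$ becomes essential only later, in Lemma \ref{Con-4-3}, where $\varepsilon$ is fixed and one needs a quantitative bound uniform in the $t_h$. This does not affect the correctness of the proof.
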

\begin{proof}[\bf{Proof}]
Note that $\mathbf{u}\in \mathcal{N}$ and $\beta_{ij}\leq 0 ~\forall (i,j)\in \mathcal{K}_{2}$. Then we see that
\[
\sum_{(i,j)\in I_{k}^{2}}\int_{\Omega}\beta_{ij}|u_{i}|^{p}|u_{j}|^{p} -\sum_{l=1,l\neq k}^{q}
\left|\sum_{(i,j)\in I_{k}\times I_{l}}\int_{\Omega}\beta_{ij} |u_{i}|^{p}|u_{j}|^{p}\right|\geq SC_{1}, ~k=1,\ldots, q,
\]
where $C_{1}$ is defined in Lemma \ref{Bounded}. Combining this with \eqref{3-p-23}, \eqref{def-5}, \eqref{3-p-16} and \eqref{P norm bound}, following the proof of \cite[Lemma 3.7]{TS 2019}, we know that $\kappa_{min}\geq \theta$.

\end{proof}

It follows from Lemma \ref{Con-4-1} that $\Phi(t_{1},\cdots,t_{m})$ has a global maximum. Next, we will show that the each component of the maximum points is positive due to the condition $-\infty<\beta_{ij}< 0, \forall (i,j)\in \mathcal{K}_{2}$.
\begin{lemma}\label{Con-4-2}
There exist $\alpha_1,\alpha_2,\alpha_3>0$ such that
\begin{equation}\label{eq:maximum_estimate}
\sup_{t_{1},\cdots,t_{m}>0} J\left(t_{1}\mathbf{u}_{1},\cdots, t_{q+1}\mathbf{v}_{q+1}^{\varepsilon},\cdots,t_{m}\mathbf{v}_{m}^{\varepsilon}\right)=
\mathop{\max_{\alpha_2<t_{1},\cdots,t_{q}<\alpha_1}}_{\alpha_3<t_{q+1},\cdots,t_{m}<\alpha_1} J\left(t_{1}\mathbf{u}_{1},\cdots, t_{q+1}\mathbf{v}_{q+1}^{\varepsilon},\cdots,t_{m}\mathbf{v}_{m}^{\varepsilon}\right),
\end{equation}
where $\alpha_1,\alpha_3$ are defined in \eqref{eq:t_and_theta}.
\end{lemma}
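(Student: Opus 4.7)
The goal is to show that the supremum defining the left side of \eqref{eq:maximum_estimate} is a maximum attained at an interior critical point of $\Phi$ on $(0,\infty)^m$, and then to read the coordinate bounds off the equations $\partial_{t_k}\Phi=0$. Existence of a maximum is immediate: Lemma~\ref{Con-4-1} gives $M_B(\mathbf{u}^\varepsilon)\mathbf{t}^p\!\cdot\!\mathbf{t}^p\geq \theta\sum_h t_h^{2p}$ with $\theta>0$, so since $2p>2$ the critical term dominates and $\Phi(\mathbf{t})\to -\infty$ as $|\mathbf{t}|\to\infty$. To exclude boundary maxima at $\{t_k=0\}$, I expand in $t_k$ near $0^+$; using $M_B(\mathbf{u}^\varepsilon)_{kh}\leq 0$ for $h\neq k$ (by \eqref{eq:generalcondition_beta}),
\[
\Phi(\mathbf{t})-\Phi\big|_{t_k=0}=\tfrac12 t_k^2 A_k+\tfrac1p t_k^p\!\sum_{h\neq k}\bigl|M_B(\mathbf{u}^\varepsilon)_{kh}\bigr|t_h^p-\tfrac1{2p}M_B(\mathbf{u}^\varepsilon)_{kk}t_k^{2p},
\]
where $A_k:=\|\mathbf{u}_k\|_k^2$ for $k\le q$ and $A_k:=\sum_{i\in I_k}\bigl(\int_\Omega|\nabla v_i^\varepsilon|^2+\lambda_i|v_i^\varepsilon|^2\bigr)$ for $k>q$. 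The first term dominates the third for small $t_k>0$ (since $2<2p$) and the middle term is nonnegative, so this quantity is strictly positive. Hence the maximum lies in $(0,\infty)^m$, and there $\partial_{t_k}\Phi=0$ becomes
\begin{equation}\label{eq:crit_prop}
t_k^{2-p}A_k=M_B(\mathbf{u}^\varepsilon)_{kk}\,t_k^p-\!\sum_{h\neq k}\!\bigl|M_B(\mathbf{u}^\varepsilon)_{kh}\bigr|t_h^p,\qquad k=1,\ldots,m.
\end{equation}

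For the uniform upper bound, multiply \eqref{eq:crit_prop} by $t_k^p$ and sum over $k$ to get
\[
\sum_k t_k^2 A_k=\mathbf{t}^p\!\cdot\!M_B(\mathbf{u}^\varepsilon)\mathbf{t}^p\geq \theta\sum_k t_k^{2p};
\]
hence with $T:=\max_k t_k$, $\theta T^{2p-2}\leq \sum_k A_k$. The Nehari identity on $\mathcal{N}_{1,\ldots,q}$ gives $\sum_{k\leq q}A_k=Nc_{1,\ldots,q}$, which is strictly less than $N\sum_{k\leq q}l_k$ by Theorem~\ref{Energy Estimates}; for $k>q$, \eqref{2.12-1,2-m} together with $\lambda_i<0$ yields $A_k\leq \int_{\mathbb{R}^N}|\nabla \mathbf{V}_k|^2+O(\varepsilon^{N-2})=Nl_k+O(\varepsilon^{N-2})$. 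Therefore $T^{2p-2}<N\sum_h l_h/\theta+o_\varepsilon(1)$, which for small $\varepsilon$ is strictly less than $\alpha_1^{2p-2}=N^2\sum_h l_h/((N-2)\theta)$.

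For the lower bounds, drop the nonnegative second sum in \eqref{eq:crit_prop} to obtain $t_k^{2p-2}\geq A_k/M_B(\mathbf{u}^\varepsilon)_{kk}$. When $k\leq q$, Lemma~\ref{Bounded} gives $A_k\geq SC_1$, while $M_B(\mathbf{u}^\varepsilon)_{kk}$ is bounded above by a constant depending only on the fixed minimizer $\mathbf{u}^q$ and on $(\beta_{ij})_{(i,j)\in I_k^2}$; the resulting positive, $\varepsilon$-independent lower bound is the desired $\alpha_2$. When $k>q$, Poincar\'e's inequality (using $\lambda_i<0$) gives
\[
A_k\geq \min_{i\in I_k}\Big(1+\frac{\lambda_i}{\lambda_1(\Omega)}\Big)\int_\Omega|\nabla\mathbf{v}_k^\varepsilon|^2,
\]
while the limiting-system Nehari identity $\int_{\mathbb{R}^N}|\nabla\mathbf{V}_k|^2=\int_{\mathbb{R}^N}f_k(\mathbf{V}_k)$, combined with \eqref{2.12-1,2-m} and \eqref{2.13-1,2-m}, yields $\int_\Omega|\nabla\mathbf{v}_k^\varepsilon|^2=M_B(\mathbf{u}^\varepsilon)_{kk}+O(\varepsilon^{N-2})$. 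Therefore $A_k/M_B(\mathbf{u}^\varepsilon)_{kk}\geq \min_{i\in I_k}(1+\lambda_i/\lambda_1(\Omega))(1-o_\varepsilon(1))>\tfrac12\min_{i\in I_k}(1+\lambda_i/\lambda_1(\Omega))$ for $\varepsilon$ small, i.e., $t_k>\alpha_3$. The main obstacle is precisely this last step: the leading-order terms in $A_k$ and $M_B(\mathbf{u}^\varepsilon)_{kk}$ nearly cancel, and the factor $\tfrac12$ baked into the formula for $\alpha_3$ in \eqref{eq:t_and_theta} is the margin needed to absorb the $O(\varepsilon^{N-2})$ errors uniformly in $h\in\{q+1,\ldots,m\}$ and on a common range of $\varepsilon$; everything else is bookkeeping.
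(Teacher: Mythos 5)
Your proof is correct and takes a genuinely different route from the paper's. The paper establishes the upper bound $t_k<\alpha_1$ by showing the crude majorant $\sum_k\bigl(\tfrac{N}{2}\sum_h l_h\,t_k^2-\tfrac{1}{2^*}\theta t_k^{2^*}\bigr)$ is negative when all coordinates exceed $\alpha_1$, and then obtains the lower bounds by decomposing $J$ into one-variable pieces $f_k(t_k)$, $g_h^\varepsilon(t_h)$ plus a nonnegative cross term $G_\varepsilon$, and showing each piece is increasing on $(0,\alpha_2)$ or $(0,\alpha_3)$. You instead use the coercivity from Lemma~\ref{Con-4-1} together with an explicit exclusion of the boundary $\{t_k=0\}$ (a step the paper's monotonicity argument handles implicitly) to obtain an interior critical point of $\Phi$, and then read \emph{both} the upper and the lower bounds off the Euler--Lagrange system \eqref{eq:crit_prop}: the upper bound by pairing the system with $\mathbf{t}^p$ and invoking $M_B(\mathbf{u}^\varepsilon)\succeq\theta I$, the lower bounds by dropping the (nonnegative) competitive cross terms. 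This is a more unified treatment and it yields the same $\alpha_1,\alpha_3$ as the paper; it also makes the upper-bound step transparent (the paper's negativity argument is stated only for the regime where \emph{all} $t_k>\alpha_1$, and completing it for the case where just one coordinate is large requires the additional monotonicity considerations that you obtain automatically from the first-order conditions). Your lower bound for $k\le q$ is phrased in terms of $SC_1/M_{kk}$ rather than the paper's $\min_k\|\mathbf{u}_k\|_k^2/M_{kk}$, but both are positive, $\varepsilon$-independent constants and the lemma only asks for existence of some $\alpha_2$; and your use of the whole-space Nehari identity for $\mathbf{V}_h$ to compare $\int_\Omega|\nabla\mathbf{v}_h^\varepsilon|^2$ with $M_B(\mathbf{u}^\varepsilon)_{hh}$ reproduces precisely the estimate $(t_h^\varepsilon)^{2^*-2}\ge\tfrac12\min_{i\in I_h}(1+\lambda_i/\lambda_1(\Omega))$ that the paper derives from \eqref{Inequality-1.2}.
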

\begin{proof}[\bf{Proof}]
Recalling the definition of $\Phi$ (see \eqref{3-p-4}), it follows from Lemma \ref{Con-4-1} that
\begin{align}\label{3-p-4.2}
\Phi(t_{1},\cdots,t_{m})
 \leq\frac{1}{2}\sum_{k=1}^{q}t_{k}^2\|\mathbf{u}_{k}\|_{k}^{2}
+\sum_{h=q+1}^{m}\frac{1}{2}t_{h}^2\sum_{i\in I_{h}}\int_{\Omega}|\nabla v^{\varepsilon}_{i}|^{2}+\lambda_{i}| v^{\varepsilon}_{i}|^{2}\, dx-\frac{1}{2p}\theta\sum_{k=1}^{m}t^{2p}_{k}.
\end{align}
Similarly to \eqref{2.15} we infer that
\begin{align}\label{3-p-4.3}
\sum_{i\in I_{h}}\int_{\Omega}|\nabla v^{\varepsilon}_{i}|^{2}+\lambda_{i}| v^{\varepsilon}_{i}|^{2} \, dx< Nl_{h},~~ \forall h=q+1, \ldots, m,
\end{align}
for small $\varepsilon$. It follows from Theorem \ref{Energy Estimates} that
\begin{equation}\label{3-p-4.4}
\|\mathbf{u}_{k}\|_{k}^{2}\leq \sum_{h=1}^q\|\mathbf{u}_{h}\|_{h}^{2}=Nc_{1,\ldots,q}\leq N\sum_{h=1}^ml_h.
\end{equation}
We deduce from \eqref{3-p-4.2}, \eqref{3-p-4.3} and \eqref{3-p-4.4} that
\begin{align*}
\Phi(t_{1},\cdots,t_{m})\leq  \sum_{k=1}^q \frac{N}{2}t_k^2\sum_{h=1}^ml_h + \sum_{h=q+1}^m \frac{N}{2}t_k^2 l_k-  \sum_{k=1}^{m}\frac{1}{2^*}\theta t_k^{2^*}
				 \leq \sum_{k=1}^m \left(\frac{N}{2}\sum_{h=1}^ml_ht_k^2 -\frac{1}{2^*}\theta t_k^{2^*}\right),
\end{align*}
which yields that $\Phi(t_{1},\cdots,t_{m})<0$ when $t_{1},\ldots,t_{m}>\alpha_1$. Thus,
\begin{align}\label{1,2,3-7}
\max_{t_{1},\ldots,t_{m}>0}& J\left(t_{1}\mathbf{u}_{1},\cdots, t_{q}\mathbf{u}_{q},t_{q+1}\mathbf{v}_{q+1}^{\varepsilon},\cdots,t_{m}\mathbf{v}_{m}^{\varepsilon}\right)
 =\nonumber\\\
& \max_{0<t_{1},\ldots,t_{m} \leq \alpha_1}J\left(t_{1}\mathbf{u}_{1},\cdots, t_{q}\mathbf{u}_{q},t_{q+1}\mathbf{v}_{q+1}^{\varepsilon},\cdots,t_{m}\mathbf{v}_{m}^{\varepsilon}\right).
\end{align}
Since $v_i^\varepsilon\cdot v_j^\varepsilon=0$ whenever $i\in I_h,j\in I_l$, $h\neq l$, $h,l\in \{q+1,\ldots, m\}$, then we have that
\begin{align*}
J(&t_{1}\mathbf{u}_{1},\cdots, t_{q}\mathbf{u}_{q},t_{q+1}\mathbf{v}_{q+1}^{\varepsilon},\cdots,t_{m}\mathbf{v}_{m}^{\varepsilon}) =\sum_{k=1}^{q}\left(\frac{1}{2}t_{k}^2\|\mathbf{u}_{k}\|_{k}^{2}
-\frac{1}{2p}t_{k}^{2p}\sum_{(i,j)\in I_{k}^2}\int_{\Omega}\beta_{ij} |u_{i}|^{p}|u_{j}|^{p}\right) \nonumber\\
& \quad+\sum_{h=q+1}^{m}\left(t_{h}^2\|\mathbf{v}^{\varepsilon}_{h}\|_{h}^{2}-\frac{1}{2p}t_{h}^{2p}\sum_{(i,j)\in I_{h}^{2}}\int_{\Omega}\beta_{ij}| v^{\varepsilon}_{i}|^{p}|v^{\varepsilon}_{j}|^{p}\right)\nonumber\\
& \quad+\left(\sum_{k=1}^{q}\sum_{h=q+1}^{m}\frac{t_{k}^pt_{h}^p}{p}\sum_{(i,j)\in I_{k}\times I_{h}}\int_{\Omega}|\beta_{ij}| |u_{i}|^{p}|v^{\varepsilon}_{j}|^{p}+\sum_{k,l=1}^{q}\sum_{l\neq k}\frac{t_{k}^pt_{l}^p}{p}\sum_{(i,j)\in I_{k}\times I_{l}}\int_{\Omega}|\beta_{ij}| |u_{i}|^{p}|u_{j}|^{p}\right)\nonumber\\
& =: \sum_{k=1}^qf_k(t_{k})+ \sum_{h=q+1}^m g^\varepsilon_h(t_{h})+G_\varepsilon(t_{1},\ldots,t_{q},t_{q+1},\ldots,t_{m}).
\end{align*}
Note that $1<p<2$ and $2p=2^*$. Consider the function
\begin{equation*}\label{t-function}
f_k(t_{k})=\frac{1}{2}t_{k}^2\|\mathbf{u}_{k}\|_{k}^{2}
-\frac{1}{2^*}t_{k}^{2^*}\sum_{(i,j)\in I_{k}^2}\int_{\Omega}\beta_{ij} |u_{i}|^{p}|u_{j}|^{p}.
\end{equation*}
Therefore, for every $k=1,\ldots,q$, the function $f_k$ is increasing whenever
\[
0<t_k<\alpha_2:= \min_{1\leq k\leq q}\left\{\frac{\|\mathbf{u}_{k}\|_{k}^{2}}{\sum_{(i,j)\in I_{k}^2}\int_{\Omega}\beta_{ij} |u_{i}|^{p}|u_{j}|^{p}}\right\}>0.
\]
Thus,
\begin{equation}\label{Maximum-1}
\max_{0<t_1,\ldots,t_q}\sum_{k=1}^{q}f_k(t_k)=\max_{\alpha_2\leq t_1,\ldots,t_q}\sum_{k=1}^{q}f_k(t_k)
\end{equation}
Similarly, $g^\varepsilon_h$ is increasing whenever $0<t_h<t^\varepsilon_h$, where
\begin{equation}\label{Inequality-1.2}
(t^\varepsilon_h)^{2^*-2}=\frac{\sum_{i\in I_{h}}\int_{\Omega}|\nabla v^{\varepsilon}_{i}|^{2}+\lambda_{i}| v^{\varepsilon}_{i}|^{2}}{\sum_{(i,j)\in I_{h}^{2}}\int_{\Omega}\beta_{ij}| v^{\varepsilon}_{i}|^{p}|v^{\varepsilon}_{j}|^{p}}.
\end{equation}
Note that $\lambda_i\in (-\lambda_1(\Omega),0)$ for every $i=1,\ldots,d$, then we have
\[
\int_\Omega(|\nabla u|^2+\lambda_iu^2)\geq \left(1+\frac{\lambda_i}{\lambda_1(\Omega)}\right)\int_\Omega|\nabla u|^2 ~\forall u\in H_0^1(\Omega).
\]
Combining this with \eqref{2.12-1,2-m}, \eqref{2.13-1,2-m} and \eqref{Inequality-1.2} we know that
\begin{align*}
(t^\varepsilon_h)^{2^*-2}&\geq \frac{\min_{i\in I_h}\left\{1+\frac{\lambda_i}{\lambda_1(\Omega)}\right\}\sum_{i\in I_{h}}\int_{\mathbb{R}^{N}}|\nabla V_{i}|^{2}+O(\varepsilon^{N-2})}{\sum_{(i,j)\in I_{h}^{2}}\int_{\mathbb{R}^{N}}\beta_{ij}| V_{i}|^{p}| V_{j}|^{p}+O(\varepsilon^{N})}\nonumber\\
&\geq \frac{1}{2}\min_{i\in I_h}\left\{1+\frac{\lambda_i}{\lambda_1(\Omega)}\right\} ~~ \text{ for } \varepsilon \text{ small enough. }
\end{align*}
We have that $g^\varepsilon_h$ is increasing whenever $0<t_h<\alpha_3$ for every $h=q+1,\ldots,m$. Therefore,
\begin{equation}\label{Maximum-2}
\max_{0<t_{q+1},\ldots,t_m}\sum_{k=q+1}^{m}g^\varepsilon_h(t_h)=\max_{\alpha_3\leq t_{q+1},\ldots,t_m}\sum_{k=q+1}^{m}g^\varepsilon_h(t_h).
\end{equation}
Since $G_\varepsilon(t_{1},\ldots,t_{q},\ldots,t_{m})\geq 0$, combining this with \eqref{Maximum-2}, \eqref{1,2,3-7} and \eqref{Maximum-1}, we see that \eqref{eq:maximum_estimate} holds.
\end{proof}

Denote
\begin{equation*}
f(t_1,\ldots,t_q):=\frac{1}{2}\sum_{k=1}^{q}t_{k}^2\|\mathbf{u}_{k}\|_{k}^{2}
-\frac{1}{2p}\sum_{k,l=1}^{q}t_{k}^pt_{l}^p\sum_{(i,j)\in I_{k}\times I_{l}}\int_{\Omega}\beta_{ij} |u_{i}|^{p}|u_{j}|^{p}.
\end{equation*}

\begin{lemma}\label{Maximum-2}
If $c_{1,\ldots,q}$ is attained by $(\mathbf{u}_{1},\cdots,\mathbf{u}_{q})$, then
\begin{equation*}
c_{1,\ldots,q}=\max_{t_{1},\cdots,t_{q}>0}f(t_{1},\cdots,t_{q}).
\end{equation*}
\end{lemma}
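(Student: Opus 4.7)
The plan is to prove Lemma~\ref{Maximum-2} by a direct computation that reduces the $q$-variable maximization to $q$ decoupled one-variable Nehari-type maximizations, exploiting the sign condition \eqref{eq:generalcondition_beta}.

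First I would introduce the shorthand
\[
A_h := \|\mathbf{u}_h\|_h^2, \qquad B_{kl} := \sum_{(i,j)\in I_k\times I_l}\int_\Omega \beta_{ij}|u_i|^p|u_j|^p,
\]
so that $f(\mathbf{t})=\tfrac12\sum_{k=1}^q t_k^2 A_k -\tfrac{1}{2p}\sum_{k,l=1}^q t_k^p t_l^p B_{kl}$. Since $(\mathbf{u}_1,\dots,\mathbf{u}_q)\in\mathcal{N}_{\{1,\dots,q\}}$, the group-Nehari identities read $A_h=\sum_{k=1}^q B_{hk}$; combined with $\beta_{ij}\leq 0$ on $\mathcal{K}_2$ (so $B_{hk}\leq 0$ for $h\neq k$) this rewrites as $A_h = B_{hh}-\sum_{k\neq h}|B_{hk}|$. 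A direct substitution then gives $f(1,\dots,1)=\tfrac{p-1}{2p}\sum_h A_h = J_{\{1,\dots,q\}}(\mathbf{u})=c_{1,\dots,q}$.

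Next, for an arbitrary $\mathbf{t}=(t_1,\dots,t_q)$ with $t_k>0$, I would split the quartic term into diagonal and off-diagonal parts and apply the AM--GM inequality $t_k^p t_l^p\leq (t_k^{2p}+t_l^{2p})/2$:
\[
-\frac{1}{2p}\sum_{k,l}t_k^p t_l^p B_{kl} = -\frac{1}{2p}\sum_k t_k^{2p}B_{kk} + \frac{1}{p}\sum_{k<l}|B_{kl}|\,t_k^p t_l^p \leq -\frac{1}{2p}\sum_k t_k^{2p}\!\left(B_{kk}-\sum_{l\neq k}|B_{kl}|\right).
\]
Feeding in the Nehari identity collapses the bracket to $A_k$ and yields the decoupled estimate
\[
f(\mathbf{t})\leq \sum_{k=1}^q A_k\,\varphi(t_k),\qquad \varphi(t):=\frac{t^2}{2}-\frac{t^{2p}}{2p}.
\]

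Finally, since $p>1$, an elementary calculation shows that $\varphi$ attains its global maximum on $[0,\infty)$ at $t=1$ with value $\varphi(1)=(p-1)/(2p)$. Therefore
\[
f(\mathbf{t})\leq \frac{p-1}{2p}\sum_k A_k = c_{1,\dots,q},
\]
with equality at $\mathbf{t}=(1,\dots,1)$, where both the AM--GM step and the pointwise bound $\varphi\leq\varphi(1)$ are simultaneously saturated. The only delicate point is bookkeeping with signs: the hypothesis $\beta_{ij}\leq 0$ on $\mathcal{K}_2$ is precisely what makes AM--GM pull in the right direction, and the Nehari identity is precisely what matches the coefficients of $t_k^{2p}$ to those of $t_k^2$. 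Beyond this, I do not foresee any serious obstacle.
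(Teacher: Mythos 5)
Your argument is correct, and it takes a genuinely different (and more self-contained) route than the paper's. The paper simply appeals to \cite[Lemma 2.2]{Wu Y-Z 2017} after noting the Nehari identities and the sign condition on $\mathcal{K}_2$; in contrast, you give a direct proof. Your key observation is that the Nehari relation $A_h = B_{hh}-\sum_{k\neq h}|B_{hk}|$ together with the two-variable AM--GM $t_k^p t_l^p\leq\tfrac12(t_k^{2p}+t_l^{2p})$ (applied to the off-diagonal terms, which all carry nonnegative signs precisely because $\beta_{ij}\le 0$ on $\mathcal K_2$) collapses the fully coupled $q$-variable functional to the decoupled upper bound $f(\mathbf t)\le\sum_k A_k\varphi(t_k)$ with $\varphi(t)=\tfrac{t^2}{2}-\tfrac{t^{2p}}{2p}$. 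Since each $A_k=\|\mathbf u_k\|_k^2>0$ (because $\mathbf u_k\neq\mathbf 0$ on $\mathcal N_{1,\dots,q}$) and $\varphi$ is maximized at $t=1$ with $\varphi(1)=\tfrac{1}{N}$ for any $p>1$, the bound is exactly $f(1,\dots,1)=\tfrac{1}{N}\sum_k A_k=c_{1,\dots,q}$ and the conclusion follows. What your approach buys is transparency and independence from the external reference: the one-line AM--GM step makes explicit why purely competitive inter-group coupling forces the global maximum to sit on the diagonal $(1,\dots,1)$, and it also immediately gives uniqueness of the maximizer, even though the lemma does not ask for it. It would be worth stating explicitly that $A_k>0$ (from the Nehari constraint) is what makes the final pointwise maximization conclusive, but beyond that small bookkeeping item the proof is complete.
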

\begin{proof}[\bf{Proof}]
We deduce from Proposition \ref{Lagrange} that $\mathbf{u}^q=(\mathbf{u}_{1},\cdots,\mathbf{u}_{q})$ is a nonnegative solution of the corresponding system, and so $(\mathbf{u}_{1},\cdots,\mathbf{u}_{q})\in \mathcal{N}_{1,\ldots,q}$. Hence,
\begin{equation*}
\sum_{l=1}^{q}\sum_{(i,j)\in I_{k}\times I_{l}}\int_{\Omega}\beta_{ij} |u_{i}|^{p}|u_{j}|^{p}=\|\mathbf{u}_k\|_k^2>0.
\end{equation*}
Note that $\beta_{ij}\leq0 ~\forall (i,j)\in \mathcal{K}_2$. Therefore, following the proof of Lemma 2.2 in \cite{Wu Y-Z 2017} we know that $(1,1,\ldots,1)$ is the unique critical point of $f$ and is a global maximum, that is
\[
\max_{t_{1},\cdots,t_{q}>0}f(t_{1},\cdots,t_{q})=f(1,1,\cdots,1)=c_{1,\ldots,q}.\qedhere
\]
\end{proof}

\begin{lemma}\label{Con-4-3}
We have
\begin{equation*}\label{eq:maximum_estimate-2}
\max_{t_{1},\cdots,t_{m}>0} J\left(t_{1}\mathbf{u}_{1},\cdots, t_{q}\mathbf{u}_{q},t_{q+1}\mathbf{v}_{q+1}^{\varepsilon},\cdots,t_{m}\mathbf{v}_{m}^{\varepsilon}\right)<
c_{1,\ldots,q}+\sum_{h=q+1}^{m}l_{h}-\delta
\end{equation*}
for small $\varepsilon$, where $\delta$ is as in Theorem \ref{Energy Estimates} and $c_{1,\ldots,q}$ is defined in \eqref{Minimizer-4}.
\end{lemma}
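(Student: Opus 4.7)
The plan is to combine the preparatory Lemmas \ref{Con-4-1}, \ref{Con-4-2} and \ref{Maximum-2} with the asymptotic estimates already established for the bubbles $\mathbf{v}_h^\varepsilon$, and then to absorb the cross-coupling error using the uniform bound \eqref{3-p-23} on the minimizer $\mathbf{u}^q$. By Lemma \ref{Con-4-2}, it suffices to bound $J$ on the compact rectangle $K := [\alpha_2,\alpha_1]^q \times [\alpha_3,\alpha_1]^{m-q}$. Since the supports $B_{2\rho_{q+1}}(x_{q+1}),\ldots,B_{2\rho_m}(x_m)$ are pairwise disjoint, the integrals $\int_\Omega |v_i^\varepsilon|^p|v_j^\varepsilon|^p$ vanish whenever $i\in I_h$, $j\in I_l$ with $h\ne l$ and $h,l\ge q+1$. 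Exploiting this and recalling that $\beta_{ij}\le 0$ on $\mathcal{K}_2$, I would decompose
\[
J\!\left(t_{1}\mathbf{u}_{1},\cdots, t_{q}\mathbf{u}_{q},t_{q+1}\mathbf{v}_{q+1}^{\varepsilon},\cdots,t_{m}\mathbf{v}_{m}^{\varepsilon}\right)
= f(t_1,\ldots,t_q) + \sum_{h=q+1}^m g^\varepsilon_h(t_h) + H_\varepsilon(t_1,\ldots,t_m),
\]
with $f$ as in Lemma \ref{Maximum-2}, $g^\varepsilon_h$ the single-group bubble functional evaluated at $t_h\mathbf{v}_h^\varepsilon$, and
\[
H_\varepsilon := \sum_{k=1}^q\sum_{h=q+1}^m \frac{t_k^p t_h^p}{p}\sum_{(i,j)\in I_k\times I_h}|\beta_{ij}|\int_\Omega |u_i|^p |v_j^\varepsilon|^p \,\ge\, 0
\]
the non-negative contribution of the cross-group coupling between the frozen component $\mathbf{u}^q$ and the bubbles.

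Next, I would estimate the three pieces on $K$. First, Lemma \ref{Maximum-2} gives directly $\max_{t_1,\ldots,t_q>0} f = c_{1,\ldots,q}$. Second, for each $h\ge q+1$, I would repeat the computation from the proof of Theorem \ref{Energy Estimates}: by \eqref{2.12-1,2-m}--\eqref{2.13-1,2-m}, together with $\int_\Omega|v_i^\varepsilon|^2 \ge C_i\varepsilon^2+O(\varepsilon^{N-2})$, one has
\[
\|\mathbf{v}_h^\varepsilon\|_h^2 \le Nl_h - C^h\varepsilon^2 + O(\varepsilon^{N-2}), \qquad \sum_{(i,j)\in I_h^2}\beta_{ij}\int_\Omega|v_i^\varepsilon|^p|v_j^\varepsilon|^p = Nl_h + O(\varepsilon^N);
\]
hence, as in the derivation of \eqref{2.15}, the ratio of these two quantities is strictly less than $1$ for $\varepsilon$ small, and
\[
\max_{t_h>0} g^\varepsilon_h(t_h) \,\le\, \frac{1}{N}\|\mathbf{v}_h^\varepsilon\|_h^2 \,\le\, l_h - \frac{1}{N}C^h\varepsilon^2 + O(\varepsilon^{N-2}).
\]
Third, for the cross-term $H_\varepsilon$ I would use \eqref{3-p-16} (so $u_i^p\le \Theta^p$ on the support of $v_j^\varepsilon$), \eqref{Coupled term-1} (so $\int_\Omega|v_j^\varepsilon|^p \le C_2\,\varepsilon^{N/2}(\log(1/\varepsilon)+1) = o(\varepsilon^2)$ since $N\ge 5$), together with the quantitative bound on $\Theta^p\max_{\mathcal{K}_2}|\beta_{ij}|$ coming from \eqref{3-p-23} and $t_k^p t_h^p \le \alpha_1^{2p}$ on $K$, to conclude that $\sup_K H_\varepsilon = o(\varepsilon^2)$.

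Putting the three bounds together yields
\[
\sup_K J \,\le\, c_{1,\ldots,q} + \sum_{h=q+1}^m l_h - \frac{\varepsilon^2}{N}\sum_{h=q+1}^m C^h + o(\varepsilon^2),
\]
and since $q<m$ one has $\sum_{h=q+1}^m C^h \ge \min_{1\le h\le m}C^h > \tfrac14 \min_{1\le h\le m}C^h$, so by taking $\varepsilon$ sufficiently small the $o(\varepsilon^2)$ error is absorbed and the desired strict inequality
\[
\sup_K J \,<\, c_{1,\ldots,q} + \sum_{h=q+1}^m l_h - \delta
\]
follows with $\delta=\tfrac{1}{4N}\min_h C^h\,\varepsilon^2$ exactly the constant from Theorem \ref{Energy Estimates}. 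The main obstacle, as already anticipated in Remark \ref{Energy estimate-6}, is precisely controlling the positive cross-coupling $H_\varepsilon$ at order $o(\varepsilon^2)$: this is where the $C^2(\overline{\Omega})$-regularity of the frozen minimizer $\mathbf{u}^q$ (which yields a uniform pointwise bound $u_i^p\le \Theta^p$ on $B_{2\rho}(x_0)$) and the assumption $N\ge 5$ (giving $\varepsilon^{N/2}=o(\varepsilon^2)$) both enter in a crucial way, together with the competitive assumption $\beta_{ij}\le 0$ on $\mathcal{K}_2$ that underlies the choice of $\Theta$ in \eqref{3-p-23}.
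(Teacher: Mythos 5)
Your proposal is correct and follows essentially the same route as the paper: restrict to the compact set given by Lemma \ref{Con-4-2}, estimate the three pieces $f$, $\sum g_h^\varepsilon$, $H_\varepsilon$ via Lemma \ref{Maximum-2}, the bubble asymptotics \eqref{2.12-1,2-m}--\eqref{2.13-1,2-m} (giving $\max g_h^\varepsilon < l_h - \tfrac{1}{N}C^h\varepsilon^2 + O(\varepsilon^{N-2})$), and the cross-term bound from \eqref{3-p-23}, \eqref{3-p-16}, \eqref{Coupled term-1} (giving $\sup_K H_\varepsilon = o(\varepsilon^2)$, the paper's \eqref{1,2-11-m}); the paper instead establishes \eqref{1,2-11-m} and defers the final combination to \cite[Lemma 3.10]{TS 2019}, which is exactly what you carry out. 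The only substantive point worth flagging is the consistency of the parameter $\varepsilon$ with the one fixed in Theorem \ref{Energy Estimates} to define $\delta$: since $\delta = \tfrac{1}{4N}\min_h C^h\,\varepsilon^2$ is pinned to that earlier $\varepsilon$, one must observe (as is implicit here, since all error constants $C_2$ and $O(\varepsilon^{N-2})$ depend only on $\lambda_i$, $\beta_{ii}$ and $\beta_{ij}$ for $(i,j)\in\mathcal{K}_1$, uniformly in $\Gamma$ and in the $\mathcal{K}_2$-couplings) that a single $\varepsilon$ can be chosen small enough for Theorem \ref{Energy Estimates} and for all instances of this lemma simultaneously; your argument implicitly uses this and it is precisely why the factor $\tfrac14$ in the definition of $\delta$, rather than $\tfrac12$, provides the needed slack.
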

\begin{proof}[\bf{Proof}]
Note that $1<p<2$ and $2p=2^*$.
By \eqref{3-p-23}, \eqref{3-p-16} and \eqref{Coupled term-1},  for $k=1,\ldots, q$, $h=q+1,\ldots,m$ and $\alpha_3<t_{h}<\alpha_1$, $0<t_{k}<\alpha_1$, we have
\begin{align}\label{1,2-11-m}
\sum_{k=1}^{q}t_{k}^pt_{h}^p\left|\sum_{(i,j)\in I_{k}\times I_{h}}\beta_{ij}
\int_{\Omega}|u_{i}|^{p}|v^{\varepsilon}_{j}|^{p}\right| &\leq d\max_{(i,j)\in \mathcal{K}_{2}}|\beta_{ij}|\alpha_1^{p}\Theta^pt_{h}^{p-2}t_{h}^2\sum_{i\in I_{h}}\int_{B_{2\rho}(x_{0})}|v^{\varepsilon}_{i}|^{p}\nonumber\\
& \leq d\max_{(i,j)\in \mathcal{K}_{2}}|\beta_{ij}|\alpha_1^{p}\alpha_3^{p-2}\Theta^pt_{h}^2\sum_{i\in I_{h}}\int_{B_{2\rho}(x_{0})}|v^{\varepsilon}_{i}|^{p}\nonumber\\
& \leq C_2o(\varepsilon^2)t_{h}^2,
\end{align}
where $C_2$ is dependent only on $\beta_{ii}>0$, $\beta_{ij}\geq 0$ for $(i,j)\in \mathcal{K}_{1}$. Based upon \eqref{1,2-11-m} and Lemma \ref{Maximum-2}, the rest of proof is exactly as \cite[Lemma 3.10]{TS 2019}, so we omit it.
\end{proof}

\begin{proof}[\bf Proof of Theorem \ref{Energy Estimation-1,2,m-1}.]
Without loss of generality, we prove that $c<c_{1,\ldots,p}+\sum_{h=p+1}^{m}l_{h}-\delta$.
By Lemma \ref{Con-4-2}, we know that $\Phi$ has a global maximum $\mathbf{t}^{\varepsilon}$ in $\overline{\mathbb{R}^{m}_{+}}$, and
\begin{equation*}\label{t-positive}
t^{\varepsilon}_{k}\geq \alpha_2>0 ~~\forall k=1,\ldots,q, ~\text{ and }~
t^{\varepsilon}_{h}\geq \alpha_3>0 ~~\forall h=q+1,\ldots,m.
\end{equation*}
Thus, $\partial_{k}\Phi(\mathbf{t}^{\varepsilon})= 0$ for every $k=1,\ldots,m$, and so
\begin{equation}\label{Manifold-8}
\left(t_{1}^{\varepsilon}\mathbf{u}_{1},\cdots,t_{q}^{\varepsilon}\mathbf{u}_{q}, t_{q+1}^{\varepsilon}\mathbf{v}_{q+1}^{\varepsilon},\cdots,t_{m}^{\varepsilon}\mathbf{v}_{m}^{\varepsilon}\right)\in \mathcal{N}.
\end{equation}
We deduce from \eqref{Manifold-8} and Lemma \ref{Con-4-3} that
\begin{align*}
c &\leq J\left(t_{1}^{\varepsilon}\mathbf{u}_{1},\cdots,t_{q}^{\varepsilon}\mathbf{u}_{q},t_{q+1}^{\varepsilon}\mathbf{v}_{q+1}^{\varepsilon},\cdots,t_{m}^{\varepsilon}\mathbf{v}_{m}^{\varepsilon}\right)\\
&\leq
\max_{t_{1},\ldots,t_{m}>0}J(t_{1}\mathbf{u}_{1},\cdots,t_{q}\mathbf{u}_{q},t_{q+1}\mathbf{v}_{q+1}^{\varepsilon},\cdots,t_{m}\mathbf{v}_{m}^{\varepsilon}) <c_{1,\ldots,q}+\sum_{h=q+1}^{m}l_{h}-\delta. \qedhere
\end{align*}
\end{proof}

Inspired by \cite{TS 2019}, we prove Theorem \ref{Theorem-1} by induction in the number of sub-groups. We start in the next subsection by proving the first induction step.
\begin{proof}[\bf Conclusion of the proof  of Theorem \ref{Theorem-1}]
Based on Proposition \ref{PS Sequence}, Theorem \ref{Energy Estimates} and Theorem \ref{Energy Estimation-1,2,m-1}, the proof follows exactly as the general case of \cite[Theorem 1.1]{TS 2019}, so we omit it.
\end{proof}

\section{\bf Asymptotic behavior of nonnegative solutions and Proof of Theorem \ref{Sign-changing solutions}}\label{sec5}

In this section, we study the asymptotic behaviors of nonnegative solutions when $\beta_{ij}\rightarrow 0^-$ or $\beta_{ij}\rightarrow -\infty$ for any $(i,j)\in \mathcal{K}_2$. This is a cornerstone in the proofs of  existence of least energy positive solutions in Theorem \ref{thm:leps}-(3),(4), which we show in the next section. After obtaining the precise asymptotic behaviors of nonnegative solutions as $\beta_{ij}\rightarrow -\infty$ for any $(i,j)\in \mathcal{K}_2$, we present the proof of Theorem \ref{Sign-changing solutions}. Recall the definitions of $J_\Gamma, \mathcal{N}_\Gamma, c_\Gamma$, all depending also on $\beta_{ij}$.
\smallbreak

\noindent \textbf{Notation.} In this section we will take sequences $\beta_{ij}^n$ for $(i,j)\in \mathcal{K}_2$ and we use the notation $J_\Gamma^n, \mathcal{N}_\Gamma^n, c_\Gamma^n$ for the corresponding functional, Nehari manifold and energy level, respectively. As usual, we also set $J_\Gamma^n=J^n, \mathcal{N}_\Gamma^n=\mathcal{N}^n$, $c_\Gamma^n=c^n$ whenever $\Gamma=\{1,\ldots,m\}$.

\subsection{Asymptotic behavior of nonnegative solutions when $\beta_{ij}\rightarrow 0^-$, $(i,j)\in \mathcal{K}_2$}
In this subsection, we investigate the asymptotic behavior of nonnegative solutions when $\beta_{ij}\rightarrow 0^-$ for any $(i,j)\in \mathcal{K}_2$.
Given $h=1,\ldots, m$, consider the following system
\begin{equation}\label{System-h-1}
\begin{cases}
-\Delta u_{i}+\lambda_{i}u_{i}=\sum_{j \in I_{h}}\beta_{ij}|u_{j}|^{p}|u_{i}|^{p-2}u_{i}  \text{ in } \Omega,\\
u_{i}\in H^{1}_{0}(\Omega)  \quad \forall i\in I_{h}.
\end{cases}
\end{equation}
We recall that, with $\Gamma=\{h\}$ (and dropping the parenthesis in the notations from now on), the ground-state level is
\begin{equation*}\label{Energy-h}
c_{h}:= \inf_{\mathbf{u}\in \mathcal{N}_{h}}J_{h}(\mathbf{u}), \quad \text{ where } \quad J_{h}(\mathbf{u}):=\int_{\Omega}\frac{1}{2}\sum_{i\in I_{h}}(|\nabla u_{i}|^{2}+\lambda_{i}u_{i}^{2})-\frac{1}{2p}\sum_{(i,j)\in I_{h}^{2}}
\beta_{ij}|u_{j}|^{p}|u_{i}|^{p},
\end{equation*}
where
\begin{equation*}
\mathcal{N}_{h}:=\Big\{\mathbf{u}\in (H^{1}_{0}(\Omega))^{a_{h}-a_{h-1}}:\mathbf{u}\neq \mathbf{0} \text{ and }
\langle \nabla J_{h}(\mathbf{u}),\mathbf{u}\rangle=0 \Big\}.
\end{equation*}
It is easy to see that $c_{h}>0$, and
\begin{equation*}
c_{h}= \inf_{\mathbf{u}\neq \mathbf{0}}\max_{t>0}J_{h}(t\mathbf{u})=\inf_{\mathbf{u}\neq \mathbf{0}}\frac{1}{N}
\left(\frac{\sum_{i\in I_h}\int_\Omega |\nabla v_i|^2+\lambda_i|v_i|^2}{\left(\sum_{(i,j)\in I_{h}^{2}}\int_{\Omega}\beta_{ij}|v_{j}|^{p}|v_{i}|^{p}\right)^{\frac{2}{2^*}}}\right)^{\frac{N}{2}}.
\end{equation*}
and we get the following
\begin{equation}\label{Vector Sobolev Inequality-2-4}
\sum_{(i,j)\in I_{h}^{2}}\int_{\Omega}\beta_{ij}|v_{j}|^{p}|v_{i}|^{p}\leq \left(Nc_h\right)^{\frac{2}{2-N}} \left(\sum_{i\in I_h}\int_\Omega |\nabla v_i|^2+\lambda_i|v_i|^2\right)^{\frac{2^*}{2}} ~\forall v_i\in H^1_0(\Omega), i\in I_h.
\end{equation}
(note the paralelism with $l_h$, related to the same subsystem in the whole space) from Theorem \ref{Theorem-1} we know that a nonnegative ground-state exists under \eqref{Conditions for the same group}, while Theorem \ref{Energy Estimates} implies that  $0<c_h<l_h$.
Then we have the following theorem
\begin{thm}\label{asymtotic behavior-1}
Let $\mathbf{a}$ be an m-decomposition of $d$ for some $1\leq m\leq d$ and assume conditions \eqref{eq:coefficients}--\eqref{eq:generalcondition_beta} hold true.  Let $\{\beta_{ij}^n\}_{(i,j)\in \mathcal{K}_2}$ be a sequence with $\beta_{ij}^n\rightarrow 0^-$ as $n\rightarrow \infty$, and $(\mathbf{u}_1^n, \ldots, \mathbf{u}_m^n)$ is a nonnegative solution  of \eqref{S-system}  with $\beta_{ij}=\beta_{ij}^n, (i,j)\in \mathcal{K}_2$ (obtained by Theorem \ref{Theorem-1}). Then, passing to a subsequence, for every $h=1,\ldots,m$ we have
\begin{equation*}
u_i^n \rightarrow \overline{u}_i \text{ strongly in } H_0^1(\Omega) \text{ as } n\rightarrow \infty, i\in I_h,
\end{equation*}
where $\overline{\mathbf{u}}_h\neq \mathbf{0}$ is a ground state of \eqref{System-h-1}. Moreover,

\begin{equation}\label{limit energy-3}
\lim_{n\rightarrow\infty}\int_{\Omega}\beta_{ij}^n|u_i^n|^p |u_j^n|^p= 0 \text{ for every } (i,j)\in\mathcal{K}_2, \quad \text{ and } \quad \overline{c}:=\lim_{n\rightarrow\infty}c^n=\sum_{h=1}^m c_h.
\end{equation}
\end{thm}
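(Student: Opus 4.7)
The plan is to combine four ingredients: (a) the energy bound $c^n<\sum_h l_h-\delta$ from Theorem \ref{Energy Estimates} to get boundedness; (b) the lower-bound estimate $c_h<l_h$ (same theorem applied to $\Gamma=\{h\}$) which encodes the ``compactness threshold'' in the critical regime; (c) Brezis--Lieb type splitting to rule out concentration; (d) a matching two-sided energy estimate $\lim c^n=\sum c_h$ obtained separately from above and below.

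First I would observe that because $\mathbf{u}^n\in \mathcal{N}^n$, we have $c^n=J^n(\mathbf{u}^n)=\frac{1}{N}\sum_{h=1}^m\|\mathbf{u}_h^n\|_h^2$. Combined with $c^n<\sum_h l_h-\delta$ (Theorem \ref{Energy Estimates}), this yields uniform boundedness in $\mathbf{H}$. Up to a subsequence, $u_i^n\rightharpoonup \bar u_i$ in $H^1_0(\Omega)$, strongly in $L^2(\Omega)$, and a.e., with $\bar u_i\geq 0$. Since $\beta_{ij}^n\to 0$ for $(i,j)\in\mathcal{K}_2$ and $|u_i^n|^p|u_j^n|^{p-1}$ is bounded in $L^{2^*/(2^*-1)}(\Omega)$, passing to the limit in the equation for $u_i^n$ (with $i\in I_h$) gives that $\bar{\mathbf{u}}_h$ is a nonnegative solution of the subsystem \eqref{System-h-1}, and also $\int_\Omega \beta_{ij}^n|u_i^n|^p|u_j^n|^p\to 0$ for $(i,j)\in\mathcal{K}_2$, the first part of \eqref{limit energy-3}.

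Next I would prove the two-sided bound $\lim_n c^n=\sum_h c_h$. For the upper bound, pick any ground state $\overline{\mathbf{w}}_h\in\mathcal{N}_h$ of \eqref{System-h-1} (which exists by Theorem \ref{Theorem-1} thanks to \eqref{Conditions for the same group}) and scale: since $\beta_{ij}^n\to 0$ in $\mathcal{K}_2$, there exist $t_h^n\to 1$ with $(t_1^n\overline{\mathbf{w}}_1,\ldots,t_m^n\overline{\mathbf{w}}_m)\in\mathcal{N}^n$, whence $c^n\leq J^n(t_1^n\overline{\mathbf{w}}_1,\ldots,t_m^n\overline{\mathbf{w}}_m)\to \sum_h J_h(\overline{\mathbf{w}}_h)=\sum_h c_h$. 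For the lower bound, the Nehari identity for $\mathbf{u}^n$ gives
\begin{equation*}
\|\mathbf{u}_h^n\|_h^2=\sum_{(i,j)\in I_h^2}\int_\Omega\beta_{ij}|u_i^n|^p|u_j^n|^p + o(1),
\end{equation*}
together with Lemma \ref{Bounded}, $\sum_{i\in I_h}|u_i^n|_{2p}^2\geq C_1$, guarantees that one can find $s_h^n\to 1$ with $s_h^n\mathbf{u}_h^n\in\mathcal{N}_h$; then $J_h(s_h^n\mathbf{u}_h^n)\geq c_h$ and $J^n(\mathbf{u}^n)=\sum_h J_h(\mathbf{u}_h^n)+o(1)=\sum_h J_h(s_h^n\mathbf{u}_h^n)+o(1)\geq \sum_h c_h+o(1)$, concluding the identity $\lim c^n=\sum c_h=\bar c$.

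The main obstacle, and the delicate point specific to the critical setting, is ruling out concentration; here I would exploit that $c_h<l_h$. Set $w_i^n:=u_i^n-\bar u_i\rightharpoonup 0$. The Brezis--Lieb lemma (applied to $|\cdot|^{2p}$ and to the mixed products $|u_i|^p|u_j|^p$, $(i,j)\in I_h^2$) together with $u_i^n\to \bar u_i$ strongly in $L^2$ yields
\begin{equation*}
\|\mathbf{u}_h^n\|_h^2=\|\bar{\mathbf{u}}_h\|_h^2+\int_\Omega|\nabla\mathbf{w}_h^n|^2+o(1),\quad \sum_{(i,j)\in I_h^2}\!\int_\Omega\beta_{ij}|u_i^n|^p|u_j^n|^p=\sum_{(i,j)\in I_h^2}\!\int_\Omega\beta_{ij}(|\bar u_i|^p|\bar u_j|^p+|w_i^n|^p|w_j^n|^p)+o(1).
\end{equation*}
Since $\bar{\mathbf{u}}_h$ solves \eqref{System-h-1}, subtracting gives $\int|\nabla\mathbf{w}_h^n|^2=\sum_{(i,j)\in I_h^2}\int\beta_{ij}|w_i^n|^p|w_j^n|^p+o(1)$. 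Plugging this into the vector Sobolev inequality on $\R^N$ (the level $l_h$), one has the alternative: either $\mathbf{w}_h^n\to 0$ strongly, or $\liminf \int|\nabla\mathbf{w}_h^n|^2\geq N l_h$. Letting $A:=\{h:\mathbf{w}_h^n\not\to 0\}$, we obtain
\begin{equation*}
\bar c=\lim c^n=\frac{1}{N}\sum_h\|\bar{\mathbf{u}}_h\|_h^2+\frac{1}{N}\sum_{h\in A}\lim\|\mathbf{w}_h^n\|_h^2\geq \sum_{h\notin A}c_h+\sum_{h\in A}l_h+\sum_{h\in A}\frac{1}{N}\|\bar{\mathbf{u}}_h\|_h^2,
\end{equation*}
where for $h\notin A$ strong convergence and Lemma \ref{Bounded} give $\bar{\mathbf{u}}_h\neq 0$, hence $\bar{\mathbf{u}}_h\in\mathcal{N}_h$ and $\|\bar{\mathbf{u}}_h\|_h^2\geq Nc_h$. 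Comparing with $\bar c=\sum c_h$ and using $l_h>c_h$ (Theorem \ref{Energy Estimates}) forces $A=\emptyset$, so $u_i^n\to \bar u_i$ strongly in $H^1_0(\Omega)$ for every $i$. Finally, from $\sum c_h=\sum \frac{1}{N}\|\bar{\mathbf{u}}_h\|_h^2=\sum J_h(\bar{\mathbf{u}}_h)\geq \sum c_h$, each $\bar{\mathbf{u}}_h$ attains $c_h$, i.e., is a ground state of \eqref{System-h-1}, completing the proof.
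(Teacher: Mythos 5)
Your proof is correct, and while it rests on the same pillars as the paper's argument (the upper bound $\limsup_n c^n\le \sum_h c_h$ via projecting a product of subsystem ground states onto $\mathcal N^n$, the vector Sobolev constant $l_h$, the strict inequality $c_h<l_h$ from Theorem \ref{Energy Estimates}, and Lemma \ref{Bounded} to keep each group from vanishing), the organization is genuinely different. The paper rules out degenerate limits by enumerating cases (all $\overline{\mathbf u}_h$ zero; exactly one nonzero; exactly $q$ nonzero) and deriving a contradiction from $D_h\ge Nl_h$ in each zero group, then proves strong convergence afterwards; the lower bound $\liminf_n c^n\ge \sum_h c_h$ is left implicit. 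You instead (i) establish the two-sided identity $\lim_n c^n=\sum_h c_h$ up front, with the lower bound coming from projecting $\mathbf u_h^n$ onto $\mathcal N_h$ (using the Nehari identity and the fact that the cross terms vanish as $\beta_{ij}^n\to 0^-$), and (ii) replace the case analysis by a single Br\'ezis--Lieb decomposition plus the concentration set $A$, from which the dichotomy $\lim\int|\nabla \mathbf w_h^n|^2\ge Nl_h$ and the comparison $c_h<l_h$ immediately force $A=\emptyset$. The net effect is a shorter, unified argument that handles all degeneracy patterns at once and simultaneously yields nonvanishing and strong convergence; the paper's version is more pedestrian but avoids invoking the mixed-product Br\'ezis--Lieb lemma explicitly. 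Two small points worth stating when you write this up: the denominator $\sum_{(i,j)\in I_h^2}\int_\Omega\beta_{ij}|u_i^n|^p|u_j^n|^p$ in the definition of $s_h^n$ is bounded away from zero because $\mathbf u^n\in\mathcal N^n$, $\beta_{ij}^n\le 0$ on $\mathcal K_2$, and Lemma \ref{Bounded}, so the projection $s_h^n$ is well-defined and $\to 1$; and the mixed Br\'ezis--Lieb identity for $\int\beta_{ij}|u_i^n|^p|u_j^n|^p$ should be cited or justified (e.g., via a.e. convergence and $L^{2^*}$-boundedness) since it is not the scalar lemma.
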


Before we prove the above theorem, we present an accurate upper bound estimate for the limit of $c^n$.
\begin{lemma}\label{Energy-3}
Suppose that the conditions in Theorem \ref{asymtotic behavior-1} are satisfied. Then
\begin{equation*}
\limsup_{n\rightarrow\infty}c^n\leq \sum_{h=1}^m c_h.
\end{equation*}
\end{lemma}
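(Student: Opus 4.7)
The plan is to construct an explicit competitor in $\mathcal{N}^n$ built from ground states of the decoupled subsystems, and then show that its energy approaches $\sum_h c_h$ as $n\to\infty$. For each $h=1,\ldots,m$, applying Theorem \ref{Theorem-1} to the single-group subsystem \eqref{System-h-1} (which is legitimate because hypothesis \eqref{Conditions for the same group} is assumed on each $I_h$) yields a nonnegative ground state $\overline{\mathbf{u}}_h$ with $J_h(\overline{\mathbf{u}}_h)=c_h$ and $\overline{\mathbf{u}}_h\in\mathcal{N}_h$. Assemble $\overline{\mathbf{u}}=(\overline{\mathbf{u}}_1,\ldots,\overline{\mathbf{u}}_m)$; this fails to lie in $\mathcal{N}^n$ in general because the full Nehari conditions for group $h$ mix in cross-integrals $\int\beta_{ij}^n|\overline{u}_i|^p|\overline{u}_j|^p$ with $(i,j)\in\mathcal{K}_2$. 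I will look for scaling parameters $\mathbf{t}^n=(t_1^n,\ldots,t_m^n)$, close to $(1,\ldots,1)$, such that $(t_1^n\overline{\mathbf{u}}_1,\ldots,t_m^n\overline{\mathbf{u}}_m)\in\mathcal{N}^n$.

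Define, for $\mathbf{t}=(t_1,\ldots,t_m)$ in a neighborhood of $(1,\ldots,1)$ and writing $\beta_{ij}^{(n)}=\beta_{ij}$ if $(i,j)\in\mathcal{K}_1$ or $i=j$ and $\beta_{ij}^{(n)}=\beta_{ij}^n$ otherwise,
\[
F^n_h(\mathbf{t}):=t_h^{2}\|\overline{\mathbf{u}}_h\|_h^2-\sum_{k=1}^{m}t_h^{p}t_k^{p}\sum_{(i,j)\in I_h\times I_k}\beta_{ij}^{(n)}\int_\Omega|\overline{u}_i|^p|\overline{u}_j|^p,
\]
so that the Nehari constraint for group $h$ reads $F^n_h(\mathbf{t})=0$. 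The key observation is that $F^0$ (the formal limit as $\beta_{ij}^n\to 0$) decouples: using $\overline{\mathbf{u}}_h\in\mathcal{N}_h$, one has $F^0_h(\mathbf{t})=\|\overline{\mathbf{u}}_h\|_h^{2}(t_h^{2}-t_h^{2p})$, whose only positive root is $t_h=1$. The Jacobian $\partial F^0/\partial\mathbf{t}$ at $\mathbf{t}=(1,\ldots,1)$ is diagonal with entries $(2-2p)\|\overline{\mathbf{u}}_h\|_h^2$, which are nonzero precisely because $1<p<2$. Since $F^n\to F^0$ in $C^1_{\mathrm{loc}}$ as $\beta_{ij}^n\to 0^-$, the Implicit Function Theorem produces, for all sufficiently large $n$, a unique solution $\mathbf{t}^n$ near $(1,\ldots,1)$ of $F^n(\mathbf{t}^n)=0$, with $\mathbf{t}^n\to(1,\ldots,1)$.

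By construction $(t_1^n\overline{\mathbf{u}}_1,\ldots,t_m^n\overline{\mathbf{u}}_m)\in\mathcal{N}^n$, so $c^n\leq J^n(t_1^n\overline{\mathbf{u}}_1,\ldots,t_m^n\overline{\mathbf{u}}_m)$. Passing to the limit using $t_h^n\to 1$, $\beta_{ij}^n\to 0^-$ for $(i,j)\in\mathcal{K}_2$, and continuity of the integrals yields
\[
\limsup_{n\to\infty}c^n\leq\sum_{h=1}^{m}\left[\tfrac{1}{2}\|\overline{\mathbf{u}}_h\|_h^{2}-\tfrac{1}{2p}\sum_{(i,j)\in I_h^2}\int_\Omega\beta_{ij}|\overline{u}_i|^p|\overline{u}_j|^p\right]=\sum_{h=1}^{m}J_h(\overline{\mathbf{u}}_h)=\sum_{h=1}^{m}c_h.
\]
The main (and essentially only) obstacle is the construction of the projection $\mathbf{t}^n$ onto $\mathcal{N}^n$: it is there that $1<p<2$ enters crucially, ensuring that the diagonal of the Jacobian of $F^0$ at $(1,\ldots,1)$ is nonzero and the IFT can be applied; once this step is in place, the energy computation and the subsequent strong convergences (established in the proof of Theorem \ref{asymtotic behavior-1}) proceed routinely.
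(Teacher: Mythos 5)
Your proof is correct and follows essentially the same strategy as the paper's: take ground states $\mathbf{w}_h$ (your $\overline{\mathbf{u}}_h$) of the $m$ decoupled subsystems, and use the Implicit Function Theorem to rescale them into $\mathcal{N}^n$, with the decoupled Jacobian at $\mathbf{t}=(1,\ldots,1)$ being diagonal with nonzero entries $\pm(2p-2)\|\mathbf{w}_h\|_h^2$. One small quibble: you attribute the invertibility of the Jacobian to $1<p<2$, but the only thing needed at this step is $p\neq 1$ (which always holds since $p=N/(N-2)>1$); the upper bound $p<2$ plays no role here, and is instead used elsewhere in the paper (e.g.\ in the proofs of Theorem \ref{thm:leps}). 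Apart from that wording, the argument matches the paper's.
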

\begin{proof}[\bf{Proof}]
For each $h=1,\ldots, m$, let $\mathbf{w}_h$ be a ground state solution of \eqref{System-h-1}. We claim that, if $n$ is sufficiently large, then
\begin{equation}\label{solutions to algebric}
\text{there exist } t_1^n,\cdots, t_m^n>0 \text{ such that } \left(t_1^n\mathbf{w}_1,\cdots, t_m^n\mathbf{w}_m\right)\in \mathcal{N}^n.
\end{equation}
This is equivalent to
\begin{equation}\label{algebric system}
(t_h^n)^2\|\mathbf{w}_h\|_h^2=(t_h^n)^{2p}\sum_{(i,j)\in I_h^2}\int_\Omega\beta_{ij}|w_i|^p|w_j|^p+\sum_{k\neq h}(t_h^n)^p(t_k^n)^p\sum_{(i,j)\in I_h\times I_k}\int_\Omega\beta_{ij}^n|w_i|^p|w_j|^p,
\end{equation}
where $h=1,\ldots,m$.

For every $h=1,\ldots,m$, consider
\begin{equation*}
\widetilde{f}_h(t_1,\cdots,t_m,\mathbf{s}):=t_h^{2p}\sum_{(i,j)\in I_h^2}\int_\Omega\beta_{ij}|w_i|^p|\omega_j|^p+\sum_{k\neq h}t_h^pt_k^p\sum_{(i,j)\in I_h\times I_k}\int_\Omega s_{ij} |w_i|^p|w_j|^p-t_h^2\|\mathbf{w}_h\|_h^2,
\end{equation*}
where $t_1,\ldots, t_m>0$, $\mathbf{s}=(s_{ij})_{(i,j)\in \mathcal{K}_2}\in\mathbb{R}^{|\mathcal{K}_2|}$. Obviously, for every $h=1,\ldots,m$, $\widetilde{f}_h(1,\cdots,1,\mathbf{0})=0$. Note that for every $h=1,\ldots,m$
\begin{equation*}
\frac{\partial \widetilde{f}_h}{\partial t_h}(1,\cdots,1,\mathbf{0})=(2p-2)\|\mathbf{w}_h\|_h^2, \quad
\frac{\partial \widetilde{f}_h}{\partial t_k}(1,\cdots,1,\mathbf{0})=0, ~h\neq k.
\end{equation*}
Define the matrix
\begin{equation*}
G:= \left(\frac{\partial \widetilde{f}_h}{\partial t_k}(1,\cdots,1,\mathbf{0})\right)_{1\leq h,k\leq m}
\end{equation*}
then $det(G)>0$. Therefore, by the implicit function theorem, there exist continuous function $t_h(\mathbf{s}),h=1,\ldots,m$ and $\delta_3>0$ small such that for every $h=1,\ldots,m$, $\widetilde{f}_h(t_1(\mathbf{s}),\cdots,t_m(\mathbf{s}),\mathbf{s})=0$ for all $s_{ij}\in (-\delta_3,\delta_3)$. Moreover, $t_h(\mathbf{0})=1, h=1,\ldots, m$. Then by continuity there exists $\delta_4<\delta_3$ such that for every $h=1,\ldots,m$ we have
\begin{equation}\label{t-Bounded}
0<t_h(\mathbf{s})\leq 2 ~~\text{ for all } s_{ij}\in [-\delta_4,\delta_4].
\end{equation}
Since $\beta_{ij}^n\rightarrow 0$ as $n\rightarrow\infty$, then we see that $\beta_{ij}^n\in [-\delta_4,\delta_4]$ for any $(i,j)\in \mathcal{K}_2$ when $n$ is sufficiently large. Hence, \eqref{algebric system} has a solution $t_1^n,\cdots, t_m^n>0$ when $n$ is sufficiently large, where $t_h^n:= t_h(\mathbf{s}^n)$, with $s_{ij}^n:=\beta_{ij}^n$ for $(i,j)\in \mathcal{K}_2$.
Therefore, \eqref{solutions to algebric} is true. It follows from \eqref{t-Bounded} that $t_1^n,\ldots,t_m^n$ are uniformly bounded, and so by \eqref{algebric system} we see that
\begin{equation*}\label{solutions to algebric-2}
\lim_{n\rightarrow \infty}|t_h^n-1|=0, ~h=1,\ldots,m.
\end{equation*}
Combining this with \eqref{solutions to algebric} we have
\begin{equation*}
\limsup_{n\rightarrow \infty} c^n\leq \limsup_{n\rightarrow \infty}J^n\left(t_1^n\mathbf{w}_1,\cdots, t_m^n\mathbf{w}_m\right)= \sum_{h=1}^m\|\mathbf{w}_h\|_h^2=\sum_{h=1}^m c_h. \qedhere
\end{equation*}
\end{proof}
\begin{proof}[\bf Proof of Theorem \ref{asymtotic behavior-1}]
Let $\mathbf{u}_n=(\mathbf{u}_1^n,\ldots, \mathbf{u}_m^n)$ be a nonnegative solution with $\beta_{ij}=\beta_{ij}^n$ for every $(i,j)\in\mathcal{K}_2$, and $\beta_{ij}^n\rightarrow 0^-$ as $n\rightarrow \infty$. By Lemma \ref{Energy-3} we see that $\{u_i^n\}$ is uniformly bounded in $H^1_0(\Omega)$. Passing to subsequence, for $h=1,\ldots,m$ we may assume that
\begin{equation}\label{S-C-1-4}
u_{i}^{n}\to \overline{u}_i \quad \text{ weakly in } H^1_0(\Omega), \text{ strongly in } L^{2}(\Omega) \text{ and a.e. in }\Omega,   ~i\in I_{h}.
\end{equation}
Hence, $\overline{u}_i\geq 0$. Next, arguing by contradiction, we show that $\overline{\mathbf{u}}_h\neq \mathbf{0}$ for every $h=1,\ldots,m$.

{\bf Case 1}. All limiting components are zero: $\overline{\mathbf{u}}=(\overline{\mathbf{u}}_{1},\cdots, \overline{\mathbf{u}}_{h},\cdots, \overline{\mathbf{u}}_{m})\equiv \mathbf{0}$.

Denote
\begin{equation}\label{Limit-2-1}
{D}_h:=\lim_{n\rightarrow \infty}\sum_{i\in I_{h}}\int_{\Omega}|\nabla u_{i}^n|^{2}.
\end{equation}
We have ${D}_h>0$ for every $h=1,\ldots,m$, otherwise
\begin{equation*}
u_{i}^{n}\rightarrow 0 \text{ strongly in } ~H^1_0(\Omega), ~i\in I_{h},\quad \text{ hence} \quad \lim_{n\rightarrow \infty}\sum_{i\in I_h}|u_i^n|^2_{2p}=0,
\end{equation*}
which contradicts Lemma \ref{Bounded}.
Next, since $\mathbf{u}^n\in\mathcal{N}^n$ and $\beta_{ij}^n\leq 0$ for every $(i,j)\in\mathcal{K}_2$, we see that
\begin{align}
\sum_{i\in I_h}\int_\Omega|\nabla u_i^n|^2+\lambda_i|u_i^n|^2 &= \sum_{(i,j)\in I_h^2}\int_\Omega\beta_{ij}|u_{i}^n|^p|u_{j}^n|^p+\sum_{k\neq h}\sum_{(i,j)\in I_h\times I_k}\int_\Omega\beta_{ij}^n|u_{i}^n|^p|u_{j}^n|^p\\
& \leq \sum_{(i,j)\in I_h^2}\int_\Omega\beta_{ij}|u_{i}^n|^p|u_{j}^n|^p
\leq \left(Nl_h\right)^{\frac{2}{2-N}} \left(\sum_{i\in I_{h}}\int_{\Omega}|\nabla u_{i}^n|^{2}\right)^{\frac{2^*}{2}}.\label{Limit-1-1}
\end{align}
Note that $\int_\Omega|u_i^n|^2\rightarrow 0, i\in I_{h}$. Combining this with \eqref{Limit-2-1}-\eqref{Limit-1-1} we have
\begin{equation*}\label{Limit-3}
{D}_h\geq Nl_h.
\end{equation*}
It follows from Theorem \ref{Energy Estimates} and Lemma \ref{Energy-3} that
\begin{equation*}\label{Limit-4}
\sum_{h=1}^m l_h>\sum_{h=1}^m c_h\geq \lim_{n\rightarrow \infty}c^{n}=\frac{1}{N}\lim_{n\rightarrow \infty}\sum_{h=1}^m \|\mathbf{u}^n_h\|_h^2=\frac{1}{N}\sum_{h=1}^m {D}_h\geq \sum_{h=1}^m l_h,
\end{equation*}
which is a contradiction. Hence, Case 1 is impossible.

{\bf Case 2}. Only one component of $\overline{\mathbf{u}}$ is not zero.

Without loss of generality, we may assume that $\overline{\mathbf{u}}_{1}\neq \mathbf{0}, \overline{\mathbf{u}}_{2}=\cdots=\overline{\mathbf{u}}_{h}=\cdots= \overline{\mathbf{u}}_{m}=\mathbf{0}$. Similarly to Case 1, we know that
\begin{equation}\label{Limit-5-4}
{D}_h\geq Nl_h\quad \text{ for every } h=2,\ldots,m.
\end{equation}
Note that $\beta_{ij}^n\leq 0$ for every $(i,j)\in\mathcal{K}_2$. Testing the equation for $u_i^n$ in \eqref{S-system} by $\overline{u}_i, i\in I_1$ and integrating over $\Omega$, we see that
\begin{align}
\int_\Omega (\nabla u_i^n\nabla \overline{u}_i+\lambda_i u_i^n \overline{u}_i) & =\sum_{j\in I_1}\int_\Omega\beta_{ij}|u_j^n|^p|u_i^n|^{p-2}u_i^n\overline{u}_i+\sum_{j\not\in I_1}\int_\Omega\beta_{ij}^n|u_j^n|^p|u_i^n|^{p-2}u_i^n\overline{u}_i\nonumber\\
&\leq\sum_{j\in I_1}\int_\Omega\beta_{ij}|u_j^n|^p|u_i^n|^{p-2}u_i^n\overline{u}_i. \label{Limit-6-4}
\end{align}
By \eqref{S-C-1-4} we see that $|u_j^n|^p|u_i^n|^{p-2}u_i^n\rightharpoonup |\overline{u}_j|^p|\overline{u}_i|^{p-2}\overline{u}_i $ weakly in  $L^{\frac{2^*}{2^*-1}}(\Omega)$, therefore
\begin{equation*}\label{Limit-7-4}
\int_\Omega\beta_{ij}|u_j^n|^p|u_i^n|^{p-2}u_i^n\overline{u}_i\rightarrow \int_\Omega\beta_{ij}|\overline{u}_j|^p|\overline{u}_i|^{p}, ~\text{ as } n\rightarrow \infty.
\end{equation*}
So, passing to the limit in \eqref{Limit-6-4}, we find
\begin{equation}\label{Limit-10}
\int_\Omega (|\nabla \overline{u}_i|^2+\lambda_i|\overline{u}_i|^2)\leq \sum_{j\in I_1} \int_\Omega\beta_{ij}|\overline{u}_j|^p|\overline{u}_i|^{p} \qquad \forall i\in I_1,
\end{equation}
which, when combined with \eqref{Vector Sobolev Inequality-2-4}, yields
\begin{equation}\label{Limit-11-4}
Nc_1\leq \sum_{i\in I_1}\int_\Omega (|\nabla \overline{u}_i|^2+\lambda_i|\overline{u}_i|^2)\leq \sum_{(i,j)\in I_1^2} \int_\Omega\beta_{ij}|\overline{u}_j|^p|\overline{u}_i|^{p}.
\end{equation}
By \eqref{Limit-5-4}, Theorem \ref{Energy Estimates}  and Lemma \ref{Energy-3} we have
\begin{align*}
c_1+\sum_{h=2}^m l_h >\sum_{h=1}^m c_h &\geq \lim_{n\rightarrow \infty}c^n=\frac{1}{N}\lim_{n\rightarrow \infty}\|\mathbf{u}_h^n\|_h^2   \\
   &= \frac{1}{N}\sum_{i\in I_1}\int_\Omega (|\nabla \overline{u}_i|^2+\lambda_i|\overline{u}_i|^2)+\frac{1}{N}\sum_{h=2}^m {D}_h+\frac{1}{N}\lim_{n\rightarrow \infty}\sum_{i\in I_1}\int_\Omega |\nabla (u_i^n-\overline{u}_i)|^2  \geq c_1+\sum_{h=2}^m l_h,
\end{align*}
a contradiction. Thus, Case 2 is not true.

{\bf Case 3}. Only $q$ components of $\overline{\mathbf{u}}$ are not zero, for some $q\in \{2,3,\ldots, m-1\}$.

Without loss of generality, we assume that $\overline{\mathbf{u}}_{1}\neq \mathbf{0}, \overline{\mathbf{u}}_{2}\neq \mathbf{0}, \cdots, \overline{\mathbf{u}}_{q}\neq \mathbf{0}, \overline{\mathbf{u}}_{q+1}=\cdots= \mathbf{u}_{h}=\cdots= \overline{\mathbf{u}}_{m}=\mathbf{0}$. Similarly to Case 1, we know that
\begin{equation}\label{Limit-5-2-4}
{D}_h\geq Nl_h \quad \text{ for every } h=q+1,\ldots,m.
\end{equation}
Similarly to \eqref{Limit-11-4}, for $h=1,\ldots,q$ we get that
\begin{equation}\label{Limit-13-4}
\sum_{i\in I_h}\int_\Omega (|\nabla \overline{u}_i|^2+\lambda_i|\overline{u}_i|^2)\geq Nc_h.
\end{equation}
We deduce from \eqref{Limit-5-2-4}, \eqref{Limit-13-4}, Theorem \ref{Energy Estimates}  and Lemma \ref{Energy-3} that
\begin{align*}
\sum_{h=1}^q c_h+\sum_{h=q+1}^m l_h>\sum_{h=1}^m c_h &\geq \lim_{n\rightarrow \infty}c^n =\frac{1}{N}\lim_{n\rightarrow \infty}\|\mathbf{u}_h^n\|_h^2\\
  & \geq \frac{1}{N}\sum_{h=1}^q\|\overline{\mathbf{u}}_h\|_h^2+\frac{1}{N}\sum_{h=q+1}^m {D}_h
   \geq \sum_{h=1}^q c_h+\sum_{h=q+1}^m l_h,
\end{align*}
a contradiction. Therefore, Case 3 is impossible.

Since Case 1, Case 2 and Case 3 are impossible, then we get that $\overline{\mathbf{u}}_{h}\neq \mathbf{0}$ for any $h=1,2,\ldots, m$. From the previous arguments, we also know that
\begin{equation*}\label{Limit-15-4}
\|\overline{\mathbf{u}}_h\|_h^2=\sum_{i\in I_h}\int_\Omega |\nabla \overline{u}_i|^2+\lambda_i|\overline{u}_i|^2\geq Nc_h\quad \text{ for every } h=1,\ldots,m.
\end{equation*}
Combining this with Lemma \ref{Energy-3} we have
\begin{align*}
\sum_{h=1}^m c_h &\geq \lim_{n\rightarrow \infty}c^n=\frac{1}{N}\lim_{n\rightarrow \infty}\|\mathbf{u}_h^n\|_h^2   \\
   &= \frac{1}{N}\sum_{h=1}^m\sum_{i\in I_h}\int_\Omega |\nabla \overline{u}_i|^2+\lambda_i|\overline{u}_i|^2+\lim_{n\rightarrow \infty}\frac{1}{N}\sum_{h=1}^m\sum_{i\in I_h}\int_\Omega |\nabla (u_i^n-\overline{u}_i)|^2\geq \sum_{h=1}^m c_h,\\
\end{align*}
which yields that
\begin{equation*}
\lim_{n\rightarrow \infty}\sum_{i\in I_h}\int_\Omega |\nabla (u_i^n-\overline{u}_i)|^2=0,\quad
\|\overline{\mathbf{u}}_h\|_h^2=Nc_h, ~h=1,\ldots,m.
\end{equation*}
Hence $u_i^n \rightarrow \overline{u}_i$  strongly in  $H_0^1(\Omega)$ as  $n\rightarrow \infty$, $i\in I_h$, $h=1,\ldots,m$, $\overline{\mathbf{u}}_h\neq \mathbf{0}$ is a ground state of \eqref{System-h-1}, and \eqref{limit energy-3} holds true.
\end{proof}

\subsection{Asymptotic behavior of nonnegative solutions when $\beta_{ij}\rightarrow -\infty$, $(i,j)\in \mathcal{K}_2$: phase separation}

In this subsection, we study the asymptotic behavior of nonnegative solutions when $\beta_{ij}\rightarrow -\infty$ for any $(i,j)\in \mathcal{K}_2$, proving Theorem \ref{Phase Separation} and also Corollary \ref{Phase Separation2}. To start with, we need an estimate in the spirit of Lemma \ref{Energy-3}. However, unlike the case $\beta_{ij}\to 0^-$, here we need to estimate also the energy level sets of subsystems. Having that in mind define, for $\Gamma\subseteq \{1,\ldots, m\}$, let
\begin{equation*}\label{limiting files minimizing}
c^{\infty}_\Gamma:=\inf_{\mathcal{N}_\Gamma^\infty}J_\Gamma^{\infty},\quad \text{ with }\quad J_\Gamma^{\infty}(\mathbf{u}):=\frac{1}{N}\sum_{h\in\Gamma}\|\mathbf{u}_h\|_h^2,
\end{equation*}
and
\begin{equation*}
 \mathcal{N}_\Gamma^{\infty}:=\left\{\mathbf{u}=\{\mathbf{u}_h\}_{h\in \Gamma}: \ 0<\|\mathbf{u}_h\|_h^2\leq \sum_{(i,j)\in I_h^2}\int_{\Omega}\beta_{ij}|u_{i}|^p|u_{j}|^p\ \forall  h\in \Gamma \text{ and }
u_{i}u_{j}\equiv 0\ \forall (i,j)\in \mathcal{K}_2 \right\}.
\end{equation*}
Set $c_\Gamma^n=c^n ,c_\Gamma^{\infty}=c^{\infty}, J_\Gamma^{\infty}=J^{\infty}$ when $\Gamma=\{1,\ldots,m\}$. It is easy to see that $c_\Gamma^{\infty}$ is well defined due to the fact that the set $\mathcal{N}_\Gamma^{\infty}$ is non empty.
\begin{lemma}\label{relationship}
Assume the conditions in Theorem \ref{Phase Separation} hold. Then we get that
\begin{equation*}
\limsup_{n\rightarrow \infty}c_\Gamma^{n}\leq c_\Gamma^{\infty}.
\end{equation*}
\end{lemma}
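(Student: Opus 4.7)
The plan is to construct, for every fixed competitor $\mathbf{u}=(\mathbf{u}_h)_{h\in\Gamma}\in\mathcal{N}_\Gamma^\infty$, a rescaling that lies on $\mathcal{N}_\Gamma^n$ and whose energy is controlled by $J_\Gamma^\infty(\mathbf{u})$ uniformly in $n$. The key structural observation is that the segregation condition $u_iu_j\equiv 0$ for every $(i,j)\in\mathcal{K}_2$ kills all competitive cross terms in the functional, so the dependence on the large parameters $\beta_{ij}^n$ disappears from the computation.

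Fix $\mathbf{u}=(\mathbf{u}_h)_{h\in \Gamma}\in \mathcal{N}_\Gamma^\infty$ (the set is nonempty, as already observed in the excerpt). For each $h\in\Gamma$ set
\[
t_h:=\left(\frac{\|\mathbf{u}_h\|_h^2}{\displaystyle\sum_{(i,j)\in I_h^2}\int_{\Omega}\beta_{ij}|u_i|^p|u_j|^p}\right)^{\frac{1}{2p-2}}\in (0,1],
\]
which is well defined and at most $1$ because $\mathbf{u}\in\mathcal{N}_\Gamma^\infty$. I claim that $(t_h\mathbf{u}_h)_{h\in\Gamma}\in\mathcal{N}_\Gamma^n$ for every $n$. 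Indeed, for $(i,j)\in I_h\times I_k$ with $h\neq k$, both in $\Gamma$, the term $\int_{\Omega}\beta_{ij}^n|u_i|^p|u_j|^p$ vanishes since $u_iu_j\equiv 0$, so the Nehari identity for group $h$ reduces to
\[
t_h^2\|\mathbf{u}_h\|_h^2 \;=\; t_h^{2p}\sum_{(i,j)\in I_h^2}\int_{\Omega}\beta_{ij}|u_i|^p|u_j|^p,
\]
which is precisely how $t_h$ was chosen. This identity also does not involve $n$.

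Using the Nehari identity to simplify $J_\Gamma^n$ in the usual way gives
\[
J_\Gamma^n(t_h\mathbf{u}_h)_{h\in\Gamma}
=\left(\frac12-\frac{1}{2p}\right)\sum_{h\in\Gamma}t_h^2\|\mathbf{u}_h\|_h^2
=\frac{1}{N}\sum_{h\in\Gamma}t_h^2\|\mathbf{u}_h\|_h^2
\leq \frac{1}{N}\sum_{h\in\Gamma}\|\mathbf{u}_h\|_h^2
=J_\Gamma^\infty(\mathbf{u}),
\]
where I used $t_h\in(0,1]$. Hence $c_\Gamma^n\leq J_\Gamma^\infty(\mathbf{u})$ for every $n$. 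Taking the infimum over $\mathbf{u}\in\mathcal{N}_\Gamma^\infty$ yields $c_\Gamma^n\leq c_\Gamma^\infty$ for every $n$, and in particular $\limsup_{n\to\infty} c_\Gamma^n\leq c_\Gamma^\infty$, as desired.

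There is no genuine obstacle here: the whole point is that segregated test vectors bypass the competitive interactions, so the argument is much shorter than the one for Lemma \ref{Energy-3} (where one had to invoke the implicit function theorem because the cross terms were $O(|\beta_{ij}^n|)$ rather than zero). The only minor thing to verify is that $t_h\in(0,1]$, which is built into the definition of $\mathcal{N}_\Gamma^\infty$.
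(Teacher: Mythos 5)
Your proof is correct and essentially identical to the paper's: both rescale a segregated test function by the same $t_h\in(0,1]$, observe that $u_iu_j\equiv 0$ removes all dependence on $\beta_{ij}^n$ from the Nehari constraint, and conclude $c_\Gamma^n\le J_\Gamma^\infty(\mathbf{u})$ for every $n$.
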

\begin{proof}[\bf{Proof}]
Given $\mathbf{u}\in \mathcal{N}_\Gamma^{\infty}$ such that $\int_{\Omega}|u_{i}|^p|u_{j}|^p=0$ for every $i\in I_h, ~j\in I_k, h\neq k \in \Gamma$, then we have $(t_1\mathbf{u}_1, \cdots,t_h\mathbf{u}_h, \cdots, t_m\mathbf{u}_m)\in\mathcal{N}_\Gamma^{n}$, where
\begin{equation*}\label{t-def}
t_h^{2p-2}= \frac{\|\mathbf{u}_h\|_h^2}{\sum_{(i,j)\in I_h^2}\int_{\Omega}\beta_{ij}|u_{i}|^p|u_{j}|^p}\leq 1.
\end{equation*}
Hence,
\begin{equation*}
c_\Gamma^{n}\leq \frac{1}{N}\sum_{h\in\Gamma}t_h^2\|\mathbf{u}_h\|_h^2\leq \frac{1}{N}\sum_{h\in\Gamma}\|\mathbf{u}_h\|_h^2=J_\Gamma^{\infty}(\mathbf{u}),
\end{equation*}
which yields that $\limsup_{n\rightarrow \infty}c_\Gamma^{n}\leq J_\Gamma^{\infty}(\mathbf{u})$ and thus $\limsup_{n\rightarrow \infty}c_\Gamma^{n}\leq c_\Gamma^{\infty}$.
\end{proof}
\begin{proof}[\bf Proof of Theorem \ref{Phase Separation}]
Let $\mathbf{u}_n=(\mathbf{u}_1^n,\ldots, \mathbf{u}_m^n)$ be a nonnegative solution associated with $c^n$, with $\beta_{ij}=\beta_{ij}^n$ for every $(i,j)\in\mathcal{K}_2$, and $\beta_{ij}^n\rightarrow -\infty$ as $n\rightarrow \infty$ (for $N\geq 5$, such a solution exists by Theorem \ref{Theorem-1}, while for $N=4$ it exists by \cite[Theorem 1.1]{TS 2019} - taking also into account Remark \ref{rem:differencesN}). By Lemma \ref{relationship} we see that $\{u_i^n\}$ is uniformly bounded in $H^1_0(\Omega)$. Passing to subsequence, for $h=1,\ldots,m$ we may assume that
\begin{equation*}
u_{i}^{n}\to \overline{u}_i \quad \text{ weakly in } H^1_0(\Omega), \text{ strongly in } L^{2}(\Omega) \text{ and a.e. in }\Omega,   ~i\in I_{h}.
\end{equation*}
We now follow closely the proof of Theorem \ref{asymtotic behavior-1}, showing by a contradiction argument that $\overline{\mathbf{u}}_h\neq \mathbf{0}$ for every $h=1,\ldots,m$. In fact, we see that Cases 1 and 2 therein (i.e, when all limiting components except possibly one vanish)  lead to a contradiction in the same way. So we focus on the case:
\begin{equation}\label{quote}
\text{Only $q$ components of $\mathbf{u}^\infty$ are not zero, for some $q\in \{2,3,\ldots, m-1\}$.}
\end{equation}
Without loss of generality, we may assume that $\mathbf{u}_{1}^\infty\neq \mathbf{0}, \mathbf{u}^\infty_{2}\neq \mathbf{0}, \cdots, \mathbf{u}_{q}^\infty\neq \mathbf{0}, \mathbf{u}_{q+1}^\infty=\cdots= \mathbf{u}_{m}^\infty=\mathbf{0}$. Similarly to Case 1 in Theorem \ref{asymtotic behavior-1}, we know that
\begin{equation}\label{Limit-5-2}
D_h:=\lim_{n\rightarrow \infty}\sum_{i\in I_{h}}\int_{\Omega}|\nabla u_{i}^n|^{2}\geq Nl_h,~h=q+1,\ldots,m.
\end{equation}
Notice that
\begin{equation*}
\sum_{(i,j)\in I_h^2} \int_\Omega\beta_{ij}|u_j^n|^p|u_i^n|^{p}+\sum_{k\neq h}\sum_{(i,j)\in I_h\times I_k} \int_\Omega\beta_{ij}^n|u_j^n|^p|u_i^n|^{p}=\|\mathbf{u}_h^n\|_h^2>0,
\end{equation*}
which implies that, for any $(i,j)\in \mathcal{K}_2$,
\begin{equation*}
|\beta_{ij}^n|\int_\Omega|u_j^n|^p|u_i^n|^{p}\leq\sum_{(i,j)\in I_h\times I_k} |\beta_{ij}^n|\int_\Omega|u_j^n|^p|u_i^n|^{p}\leq\sum_{(i,j)\in I_h^2} \int_\Omega\beta_{ij}|u_j^n|^p|u_i^n|^{p}\leq C.
\end{equation*}
Thus $u_j^\infty\cdot u_i^\infty\equiv 0$ for every $(i,j)\in \mathcal{K}_2$, since $\beta_{ij}\to -\infty$.

Similarly to \eqref{Limit-10}, for $h=1,\ldots,q$ we get that
\begin{equation*}\label{Limit-13}
\sum_{i\in I_h}\int_\Omega |\nabla u_i^\infty|^2+\lambda_i|u_i^\infty|^2\leq \sum_{(i,j)\in I_h^2} \int_\Omega\beta_{ij}|u_j^\infty|^p|u_i^\infty|^{p}.
\end{equation*}
Hence, $(\mathbf{u}_{1}^\infty,\cdots,\mathbf{u}_{q}^\infty)\in \mathcal{N}_\Gamma^{\infty}$ for $\Gamma=\{1,\ldots,q\}$. Then
\begin{equation}\label{Limit-14}
J^\infty_{1,\ldots,q}(\mathbf{u}_{1}^\infty,\cdots,\mathbf{u}_{q}^\infty)=\frac{1}{N}\sum_{h=1}^q\|\mathbf{u}^\infty_h\|_h^2\geq c^\infty_{1,\ldots,q}.
\end{equation}
We deduce from \eqref{Limit-5-2}, \eqref{Limit-14}, Lemma \ref{relationship} and Theorem \ref{Energy Estimation-1,2,m-1} that
\begin{align*}
c^\infty_{1,\ldots,q}+ \sum_{h=q+1}^m l_h -\delta &\geq \lim_{n\rightarrow \infty}c^n_{1,\ldots,q}+ \sum_{h=q+1}^m l_h -\delta\geq \lim_{n\rightarrow \infty}c^n =\frac{1}{N}\lim_{n\rightarrow \infty}\|\mathbf{u}_h^n\|_h^2    \\
   &\geq \frac{1}{N}\sum_{h=1}^q\|\mathbf{u}^\infty_h\|_h^2+\frac{1}{N}\sum_{h=q+1}^m D_h\geq c^\infty_{1,\ldots,q}+\sum_{h=q+1}^m l_h,
\end{align*}
a contradiction. Therefore also \eqref{quote} is impossible and we get that $\mathbf{u}^\infty_{h}\neq \mathbf{0}$ for any $h=1,2,\ldots, m$.

\smallbreak

From the previous arguments we know that $\mathbf{u}^\infty \in \mathcal{N}_\Gamma^{\infty}$. Reasoning as in the last part of the proof of Theorem \ref{asymtotic behavior-1}, it is now straightforward to prove  that $u_i^n\rightarrow u_i^\infty$ strongly in $H^1_0(\Omega)$, $i\in I_h, h=1\ldots,m$, that $\mathbf{u}^\infty$ achieves $c^\infty$ and
 \begin{equation}\label{limit energy}
\lim_{n\rightarrow\infty}c^n=c^\infty=J(\mathbf{u}^\infty),\quad  \lim_{n\rightarrow\infty}\int_{\Omega}\beta_{ij}^n|u_i^n|^p |u_j^n|^p= 0 \text{ for every } (i,j)\in\mathcal{K}_2.
\end{equation}
Finally, following the proof of \cite[Lemma 6.1]{Zou 2012}, we can get that $\{u_i^n\}$ is uniformly bounded in $L^\infty(\Omega)$. Then by \cite[Theorem 1.5]{STTZ 2016} we know that $u_i^\infty$ is Lipschitz continuous in $\Omega$, $i\in I_h, h=1\ldots,m$, that the convergence is in H\"older spaces and that  \eqref{limit equation-4} is true, and $\overline{\Omega}=\bigcup_{h=1}^m\overline{\left\{|\mathbf{u}_h^\infty| >0\right\}}$.
\end{proof}

\begin{proof}[\bf Proof of Corollary \ref{Phase Separation2}.]
Based on Theorem \ref{Phase Separation}, taking $m=d$, $\mathbf{a}=(1,\ldots, m)$, we see that $u_i^\infty\in C^{0,1}(\overline{\Omega})$, $i=1,\ldots,d$ and $\overline{\Omega}=\bigcup_{i=1}^d\overline{\{u_i^\infty>0\}}$. Observe that for every $i=1,\ldots,d$, the set $\{u_i^\infty>0\}$ is open, and $\{u_i^\infty>0\}\cap\{u_j^\infty>0\}=\varnothing$ when $i\neq j$. For every $i=1,\ldots,d$, let $v_i$ be a least energy positive solution of
\begin{equation*}
-\Delta u +\lambda_i u=\beta_{ii}u^{2^*-1}, \quad u\in H_0^1(\{u_i^\infty>0\}).
\end{equation*}
Thus $J(0,\cdots,v_i,\cdots,0)\leq J(0,\cdots,u_i^\infty,\cdots,0)$. Note that $\{v_i>0\}\cap\{v_j>0\}=\varnothing$ when $i\neq j$, and $(v_1,\cdots,v_d)\in\mathcal{N}^n$.
Then, we deduce that
\begin{align*}
 \lim_{n\rightarrow\infty}c^n  &\leq J(v_1,\cdots,v_i,\cdots,v_d) =\sum_{i=1}^d J(0,\cdots,v_i,\cdots,0) \leq \sum_{i=1}^d J(0,\cdots,u_i^\infty,\cdots,0)= \lim_{n\rightarrow\infty}c^n.
\end{align*}
It follows that
$J(0,\cdots,v_i,\cdots,0)= J(0,\cdots,u_i^\infty,\cdots,0)$, and so $u_i^\infty$ is a least energy positive solution of
\begin{equation*}
-\Delta u +\lambda_i u=\beta_{ii}u^{2^*-1}, \quad u\in H_0^1(\{u_i^\infty>0\}).
\end{equation*}

It remains to show that for every $i=1,\ldots,d$, the set $\{u_i^\infty>0\}$ is connected. By contradiction, without loss of generality we assume that $\{u_1^\infty>0\}$ has at least two connected components $\widetilde{\Omega}_{1}$ and $\widetilde{\Omega}_{2}$. Denote $\widetilde{u}_{i}:= \chi_{\widetilde{\Omega}_{i}}u_1^\infty$, $i=1,2$, where $\chi_{\widetilde{\Omega}_{i}}(x)$ is the indicator function of the set $\widetilde{\Omega}_{i}$.
Obviously, $\widetilde{u}_{i}\in H_0^1(\{u_1^\infty>0\})$ and $\widetilde{u}_{i} \not\equiv 0$. Note that $J'(u_1^\infty,0,\cdots,0)(\widetilde{u}_{1},0,\cdots,0)=0$, $i=1,2$, then we get that
\begin{equation*}
\int_{\{u_1^\infty>0\}}|\nabla \widetilde{u}_{i}|^2+\lambda_1|\widetilde{u}_{i}|^2=\int_{\{u_1^\infty>0\}}\beta_{11}|\widetilde{u}_{i}|^{2^*},
\end{equation*}
which yields that $J(\widetilde{u}_{i},0,\cdots,0)\geq J(u_1^\infty,0,\cdots,0)$. Therefore we get a contradiction from the fact that $J(u_1^\infty,0,\cdots,0)\geq J(\widetilde{u}_{1},0,\cdots,0)+J(\widetilde{u}_{2},0,\cdots,0)\geq 2J(u_1^\infty,0,\cdots,0)$.
\end{proof}

\subsection{Proof of Theorem \ref{Sign-changing solutions}}
In this subsection, we prove Theorem \ref{Sign-changing solutions} by using Theorem \ref{Phase Separation}.
\begin{proof}[\bf Proof of Theorem \ref{Sign-changing solutions}.]
Set $m=d=2$, $\lambda_1=\lambda_2=\lambda$ and $\beta_{11}=\beta_{22}=\mu$. It is well known that the solutions of \eqref{B-N problem} are critical points of the following functional $I: H_0^1(\Omega)\rightarrow \mathbb{R}$
\begin{equation*}
I(u)=\frac{1}{2}\int_\Omega|\nabla u|^2+\lambda |u|^2-\frac{1}{2^*}\int_\Omega\mu|u|^{2^*}.
\end{equation*}
Define
\begin{equation*}
\mathcal{N}_1:=\left\{u\neq 0: \int_\Omega|\nabla u|^2+\lambda |u|^2=\int_\Omega\mu|u|^{2^*}\right\},\quad
\mathcal{M}:=\left\{ u\in H_0^1(\Omega): u^{\pm} \neq 0, u^+ \in \mathcal{N}_1 \text{ and } u^- \in \mathcal{N}_1 \right\},
\end{equation*}
where $u^+:=\max\{u,0\}$ and $u^-:=\max\{-u,0\}$. Obviously, any sign-changing solutions of \eqref{B-N problem} belong to $\mathcal{M}$.
Consider
\begin{equation*}
a:= \inf_{u\in \mathcal{M}}I(u).
\end{equation*}
We claim that $a=I(u_1^\infty-u_2^\infty)$. In fact, we deduce from the proof of Theorem \ref{Phase Separation} that $I(u_1^\infty-u_2^\infty)=J(u_1^\infty, u_2^\infty)$ and $u_1^\infty-u_2^\infty\in \mathcal{M}$. Therefore,
\begin{equation}\label{limit-energy-4-1}
 I(u_1^\infty-u_2^\infty)\geq a.
\end{equation}
On the other hand, for any $u\in \mathcal{M}$, we see that $(u^+,u^-)\in \mathcal{N}^n$. Thus, $c^n\leq J(u^+,u^-)$. Combining these with \eqref{limit energy} we get that
\begin{equation*}\label{limit-energy-4}
I(u_1^\infty-u_2^\infty)=J(u_1^\infty, u_2^\infty)=\lim_{n\rightarrow\infty}c^n\leq J(u^+,u^-)=I(u),
\end{equation*}
which implies that
\begin{equation}\label{limit-energy-4-2}
I(u_1^\infty-u_2^\infty)\leq a.
\end{equation}
Thus, by \eqref{limit-energy-4-1} and \eqref{limit-energy-4-2} we have $a=I(u_1^\infty-u_2^\infty)$ and $u_1^\infty- u_2^\infty\in \mathcal{M}$. By a standard argument (see for instance \cite[Lemma 5.2]{Zou 2015}), we get that $I'(u_1^\infty-u_2^\infty)=0$. Therefore, $u_1^\infty-u_2^\infty$ is a least energy sign-changing solution to \eqref{B-N problem}, and has two nodal domains.
\end{proof}

\section{Existence of least energy positive solutions: proof of Theorem \ref{thm:leps}}\label{sec6}

\begin{proof}[\bf Proof of Theorem \ref{thm:leps}-(1)]
It follows from Theorem \ref{Theorem-1} that $\widetilde{c}$ is achieved by a ground-state solution $\mathbf{u}\in \widetilde{\mathcal{N}}$, where $\widetilde{c}$ and $\widetilde{\mathcal{N}}$ are defined in \eqref{ground state}. For $d=2$, the proof of Theorem \ref{thm:leps}-(1)  follows by \cite[Theorem 1.3]{Zou 2015}, or by reasoning as in  \cite[page 385]{TO 2016}. In the general case of $d>2$ equations, the result can be shown following word by word the reasoning of the proof of \cite[Theorem 1.2]{TO 2016} (see page 386 therein), arguing by induction in the number of equations: assuming that all $c_\Gamma$ for every $\emptyset \neq \Gamma \subsetneq\{1,\ldots, d\} $ are associated with a fully-nontrivial solution, and that  $c=c_{\{1,\ldots, n\}}$ is associated with a ground-state of type $(u_1,\ldots, u_{d-1},0)$ with $u_i\neq 0$, then one finds $w\in H^1_0(\Omega)$, $\theta>0$ and $t>0$ such that $(tu_1,\ldots, tu_{d-1},t\theta w)\in \mathcal{N}$ and $J(tu_1,\ldots, tu_{d-1},t\theta w)<J(u_1,\ldots, u_d)$, a contradiction. We stress that it is of key importance in these arguments the fact that $1<p<2$.
\end{proof}

\begin{proof}[\bf Proof of Theorem \ref{thm:leps}-(2)] This is a direct consequence of Theorem \ref{Theorem-1} for the case of an $m=d$ decomposition $\mathbf{a}=(1,\ldots, d)$.
\end{proof}

\begin{proof}[\bf Proof of Theorem \ref{thm:leps}-(3)]
\smallbreak

\emph{Step1.} Assume by contradiction that the statement doesn't hold. Then there exist sequences $\beta^n_{ij}\rightarrow 0^-$  for every $(i,j)\in \mathcal{K}_2$ as $n\rightarrow \infty$  such that  $\mathbf{u}_n=(\mathbf{u}_1^n,\ldots, \mathbf{u}_m^n)$ has a zero component. Here $\mathbf{u}_n$ denotes the solution of \eqref{S-system} with level $c^n=J^n(\mathbf{u}^n)$, provided by Theorem \ref{Theorem-1}.  We also denote by $\mathcal{N}^n$ the associated Nehari-type set.

Without loss of generality and passing if necessary to a subsequence, we can assume that $|I_m|>1$ and that there exists $m_{\overline{i}}\in I_m$  such that $u^n_{m_{\overline i}}\equiv 0$ for every $n$. By Lemma \ref{Bounded} we see (passing if necessary again to a subsequence) that there exists $m_{\widehat{i}}\in I_m$, such that
\begin{equation}\label{Positive-3-4-1}
|u^n_{m_{\widehat{i}}}|^2_{2p}\geq C_1/ d  ~\text{ for every } n.
\end{equation}
To fix ideas we suppose that $m_{\widehat{i}}<m_{\overline{i}}$. Hence, we assume that $\mathbf{u}_m^n=(u^n_{a_{m-1}+1},\cdots,u^n_{m_{\widehat{i}}},\cdots,u^n_{m_{\overline{i}}},\cdots,u^n_{a_m})$, where $u^n_{m_{\overline{i}}}\equiv 0$. Note that $u_i^n$ are uniformly bounded in $H^1_0(\Omega)$.
Moreover, we deduce from Theorem \ref{asymtotic behavior-1} that
\begin{equation}\label{Positive-3-4}
u_i^n\rightarrow \overline{u}_i \text{ strongly in } H^1_0(\Omega) \text{ as } n\rightarrow \infty\ \forall i,\qquad \lim_{n\rightarrow\infty}\int_{\Omega}\beta_{ij}^n|u_i^n|^p |u_j^n|^p= 0 \text{ for every } (i,j)\in\mathcal{K}_2.
\end{equation}
The plan now is to replace the zero component in $\mathbf{u}_m^n$ by a multiple of $u_{m_{\widehat{i}}}$, showing that this decreases the energy, leading to a contradiction. Having this in mind, let
\begin{equation*}\label{De-u-4}
\widehat{\mathbf{u}}_m^n(s):=(u^n_{a_{m-1}+1},\cdots,u^n_{m_{\widehat{i}}},\cdots,|s|^{\frac{1}{p}-1}su^n_{m_{\widehat{i}}},\cdots,u^n_{a_m})
~~\forall s\in \R.
\end{equation*}
We wish to show that, for $s>0$, there exists $t_1,\ldots, t_m>0$ such that $(t_1^n\mathbf{u}_1,\ldots,t_{m-1}^n \mathbf{u}_{m-1}, t_{m}^n \widehat{\mathbf{u}}_m^n)\in\mathcal{N}^n$, which is equivalent to
\begin{align*}
f_h^n(t_1,\cdots,t_m,s) &:= A_{hh}^n t_h^{2p}+\sum_{k\neq h}^{m}A_{hk}^nt_k^pt_h^p+A_{hm_{\overline{i}}}^nt_m^pt_h^ps-\|\mathbf{u}^n_h\|_h^2t_h^2=0,\ \text{ for every $h=1,\ldots,m-1$,}\\
f_m^n(t_1,\cdots,t_m,s) &:=\left[A^n_{mm}+2sA^n_{mm_{\overline{i}}}+s^2\int_{\Omega}\beta_{m_{\overline{i}}m_{\overline{i}}}\left|u^n_{m_{\widehat{i}}}\right|^{2p}
\right]t_m^{2p}+\sum_{k=1}^{m-1}A_{km}^nt_k^pt_m^p\nonumber\\
& +\sum_{k=1}^{m-1}A_{km_{\overline{i}}}^nt_k^pt_m^ps-\left(\|\mathbf{u}^n_m\|^2_{m}
+|s|^{\frac{2}{p}-1}s\|u^n_{m_{\widehat{i}}}\|^2_{m_{\overline{i}}}\right)t_m^2=0,
\end{align*}
where
\begin{equation*}
A_{hk}^n:=\sum_{(i,j)\in I_h\times I_k}\int_{\Omega}\beta_{ij}^n|u_i^n|^p |u_j^n|^p,\qquad A_h^n:=\sum_{i\in I_h}\int_{\Omega}\beta_{im_{\overline{i}}}^n|u_i^n|^p |u^n_{m_{\widehat{i}}}|^p.
\end{equation*}
For future use, we point out that, due to the assumption $\beta_{ij}^n\to 0$ for every $(i,j)\in \mathcal{K}_2$ and by the uniform $H^1_0$--bounds of $\mathbf{u}^n$, we have
\begin{equation}\label{limit-1-4}
\lim_{n\rightarrow \infty}A^n_{hk}=0, \ \forall h\neq k,\quad \lim_{n\rightarrow \infty}A^n_{hm_{\overline{i}}}=0 \ \forall h=1,\ldots, m-1.
\end{equation}

\smallbreak

\emph{Step 2. }
Obviously, $f_h^n\in C^1(\mathbb{R}^{m+1}, \mathbb{R})$ for every $h=1,\ldots,m$. Set $\mathbf{a}=(1,\cdots,1,0)$. Note that $\mathbf{u}^n\in \mathcal{N}^n$, then $f_h^n(\mathbf{a})=0$ for every $h=1,\ldots,m$, that is, $\|\mathbf{u}^n_h\|_h^2=A_{hh}^n+\sum_{k\neq h}^{m}A_{hk}^n$.  Define
\begin{equation*}\label{Positive-8-1-4}
B_1:= \left(\frac{\partial f_h^n}{\partial t_k}(\mathbf{a})\right)_{1\leq h,k\leq m}.
\end{equation*}
Next, we show that $det(B_1)\geq C$, where $C$ is independent on $n$. By a direct computation, for $h,k=1,\ldots,m-1$ we have
\begin{equation}\label{Positive-6-4}
\frac{\partial f_h^n}{\partial t_h}(\mathbf{a})=2pA_{hh}^n+p\sum_{k\neq h}^{m}A_{hk}^n-2\|\mathbf{u}^n_h\|_h^2,\quad \frac{\partial f_h^n}{\partial t_k}(\mathbf{a})=pA_{hk}^n, ~k\neq h, \quad  \frac{\partial f_h^n}{\partial s}(\mathbf{a})=A_{hm_{\overline{i}}}^n.
\end{equation}
Similarly we see that
\begin{equation}\label{Positive-8-4}
\frac{\partial f_m^n}{\partial t_m}(\mathbf{a})=2pA^n_{mm}+p\sum_{k=1}^{m-1}A_{km}^n-2\|\mathbf{u}^n_m\|_m^2,\quad
\frac{\partial f_m^n}{\partial t_k}(\mathbf{a})=pA_{km}^n ,~~k\neq m,\quad \frac{\partial f_m^n}{\partial s}(\mathbf{a})=2A^n_{mm_{\overline{i}}}+\sum_{k=1}^{m-1}A_{km_{\overline{i}}}^n.
\end{equation}
We see from \eqref{Positive-6-4}, \eqref{Conditions for the same group}, $\beta^n_{ij}\leq0 ~(i,j)\in \mathcal{K}_2$, $\mathbf{u}^n\in \mathcal{N}^n$ that
\begin{align*}
\frac{\partial f_h^n}{\partial t_h}(\mathbf{a})-\sum_{k\neq h}^{m}\left|\frac{\partial f_h^n}{\partial t_k}(\mathbf{a})\right|&=2pA_{hh}^n+2p\sum_{k\neq h}^{m}A_{hk}^n-2\|\mathbf{u}^n_h\|_h^2. \\
		&\geq (2p-2)\|\mathbf{u}^n_h\|_h^2   \geq 2(p-1)S\sum_{i\in I_h}|u_i^n|^2_{2p}\geq 2(p-1)SC_1  ~~h=1,\ldots,m,
\end{align*}
where $C_1$ is defined in Lemma \ref{Bounded} independent from $n$. Therefore, $B_1$ is strictly diagonally dominant.
Moreover, by the Gershgorin circle theorem we have $det(B_1)\geq C$,

\smallbreak

\emph{Step 3. }The implicit function theorem implies the existence of functions $t_h^n(s)$ of class $C^1$ in some interval $(-\sigma_n, \sigma_n)$ for $\sigma_n>0$, satisfying $t_h^n(0)=1$  and
\begin{equation}\label{Positive-13-4}
f_h^n(t_1^n(s),\cdots,t_h^n(s),\cdots,t_m^n(s),s)\equiv 0, ~s\in [0, \sigma_n), \text{ for every } h=1,\ldots,m,
\end{equation}
that is, $(t_1^n(s)\mathbf{u}_1^n,\cdots,t_{m-1}^n(s)\mathbf{u}_{m-1}^n,t_m^n(s)\widehat{\mathbf{u}}_m^n(s))\in \mathcal{N}^n$. Therefore,
\begin{equation}\label{Positive-13-4}
\sum_{k=1}^m\frac{\partial f_h^n}{\partial t_k}(\mathbf{a})\frac{\partial t_k^n}{\partial s}(0)+\frac{\partial f_h^n}{\partial s}(\mathbf{a})=0  ~\text{ for every } h=1,\ldots,m,
\end{equation}
where (by Cramer's rule)
\begin{equation}\label{Positive-14-4}
\frac{\partial t_k^n}{\partial s}(0):= \lim_{s\rightarrow 0^+}\frac{t_k^n(s)-1}{s}=\frac{det(B_1^k)}{det(B_1)}  ~\text{ for every } k=1,\ldots,m,
\end{equation}
where
 \begin{equation*}\label{Positive-15-4}
B_1^k:=
\left(
\begin{matrix}
 \frac{\partial f_1^n}{\partial t_1}(\mathbf{a})&\cdots& \frac{\partial f_1^n}{\partial t_{k-1}}(\mathbf{a})&\frac{\partial f_1^n}{\partial s}(\mathbf{a})&\frac{\partial f_1^n}{\partial t_{k+1}}(\mathbf{a})&\cdots&\frac{\partial f_1^n}{\partial t_m}(\mathbf{a})\\
 \frac{\partial f_2^n}{\partial t_1}(\mathbf{a})&\cdots& \frac{\partial f_2^n}{\partial t_{k-1}}(\mathbf{a})&\frac{\partial f_2^n}{\partial s}(\mathbf{a})&\frac{\partial f_2^n}{\partial t_{k+1}}(\mathbf{a})&\cdots&\frac{\partial f_2^n}{\partial t_m}(\mathbf{a})\\
\cdots&\cdots&\cdots&\cdots&\cdots&\cdots&\cdots\\
\frac{\partial f_m^n}{\partial t_1}(\mathbf{a})&\cdots& \frac{\partial f_m^n}{\partial t_{k-1}}(\mathbf{a})&\frac{\partial f_m^n}{\partial s}(\mathbf{a})&\frac{\partial f_m^n}{\partial t_{k+1}}(\mathbf{a})&\cdots&\frac{\partial f_m^n}{\partial t_m}(\mathbf{a})\\
\end{matrix}
\right).
\end{equation*}
Observe that $\{u_i^n\}_{i\in I_h, h=1,\ldots,m}$ are uniformly bounded in $H_0^1(\Omega)$ (independently on $\beta_{ij}$ for $(i,j)\in \mathcal{K}_2$). Moreover, again from \eqref{Conditions for the same group} and $\beta^n_{ij}\leq0 ~(i,j)\in \mathcal{K}_2$ and the fact that $\mathbf{u}^n\in \mathcal{N}^n$,
\begin{equation*}\label{Positive-16-4}
A_{hh}^n,\ \sum_{k\neq h}^{m}|A_{hk}^n| \leq\|\mathbf{u}^n_h\|_h^2+A_{hh}^n\leq C,
\end{equation*}
where $C>0$ doesn't depend on $n$.
 Combining this with \eqref{Positive-6-4} and \eqref{Positive-8-4}, we get that
\begin{equation}\label{Positive-17-4}
\left|\frac{\partial f_h^n}{\partial t_k}(\mathbf{a})\right|,\ \left|\frac{\partial f_h^n}{\partial s}(\mathbf{a})\right| \leq C, \text{ for every } h,k=1,\ldots,m.
\end{equation}
It follows from  \eqref{Positive-14-4}, \eqref{Positive-17-4} the fact that $\det(B_1)\geq C$ that
\begin{equation*}\label{Positive-19-4}
\left|\frac{\partial t_k^n}{\partial s}(0)\right|\leq C, \text{ for every } k=1,\ldots,m,
\end{equation*}
where $C>0$ doesn't depend on $n$.

\smallbreak

\emph{Step 4.} Therefore, we deduce from \eqref{Positive-3-4}, \eqref{limit-1-4}, \eqref{Positive-6-4}, \eqref{Positive-13-4}, that
\begin{equation*}\label{Positive-20-4-4}
\left(2p\sum_{(i,j)\in I_h^2}\int_\Omega\beta_{ij}|\overline{u}_i|^p|\overline{u}_j|^p-2\|\overline{\mathbf{u}}_h\|_h^2\right)\lim_{n\rightarrow \infty}\frac{\partial t_h^n}{\partial s}(0)=0, ~h=1,\ldots, m-1.
\end{equation*}
On the other hand, by Theorem \ref{asymtotic behavior-1} we see that
\begin{equation*}\label{Limit-Positive}
2p\sum_{(i,j)\in I_h^2}\int_\Omega\beta_{ij}|\overline{u}_i|^p|\overline{u}_j|^p-2\|\overline{\mathbf{u}}_h\|_h^2=
(2p-2)\|\overline{\mathbf{u}}_h\|_h^2>0 ~\text{ for every } h=1,\ldots,m.
\end{equation*}
thus
\begin{equation}\label{Positive-20-4}
\lim_{n\rightarrow \infty}\frac{\partial t_h^n}{\partial s}(0)=0, \text{ for every } h=1,\ldots,m-1.
\end{equation}
Similarly to \eqref{Positive-20-4}, we have
\begin{equation}\label{Positive-20-2-4-1}
\lim_{n\rightarrow \infty}\frac{\partial t_m^n}{\partial s}(0)=-\frac{2\overline{A}_{m}}{2p\overline{A}_{mm}-2\|\overline{\mathbf{u}}_m\|_{m}^2}
=-\frac{2\overline{A}_{m}}{(2p-2)\|\overline{\mathbf{u}}_m\|_{m}^2},
\end{equation}
where
\begin{equation*}
\overline{A}_{mm}:= \sum_{(i,j)\in I_m^2}\int_{\Omega}\beta_{ij}|\overline{u}_i|^p |\overline{u}_j|^p, \quad \overline{A}_{m}:= \sum_{i\in I_m}\int_{\Omega}\beta_{im_{\overline{i}}}|\overline{u}_i|^p |\overline{u}_{m_{\widehat{i}}}|^p>0,
\end{equation*}
where the positivity of $\overline{A}_{m}$ is a consequence of \eqref{Positive-3-4-1} . We deduce from \eqref{Positive-20-2-4-1} that
\begin{equation}\label{Positive-20-2-4}
\lim_{n\rightarrow \infty}\frac{\partial t_m^n}{\partial s}(0)=:-L<0.
\end{equation}
Note that $t_{h}^n(s)=1+\frac{\partial t_h^n}{\partial s}(0)s+o_n(s)$ where, for each $n$ fixed, $o_n(s)/s\to 0$ as $s\to 0$. Then
\begin{equation*}\label{Positive-21-4}
\left|t_{h}^n(s)\right|^2=1+2\frac{\partial t_h^n}{\partial s}(0)s+o_n(s^2).
\end{equation*}
Combining this with the fact that $(t_1^n(s)\mathbf{u}_1^n,\cdots,t_{m-1}^n(s)\mathbf{u}_{m-1}^n,t_m^n(s)\widehat{\mathbf{u}}_m^n(s))\in \mathcal{N}^n$, we see that
\begin{align*}
c^n &\leq \frac{1}{N}\sum_{h=1}^{m-1}|t_{h}^n(s)|^2\|\mathbf{u}^n_h\|_h^2 + \frac{1}{N}|t_{m}^n(s)|^2\|\widehat{\mathbf{u}}_m^n\|_m^2\\
& = \frac{1}{N}\sum_{h=1}^{m-1}\left(1+2\frac{\partial t_h^n}{\partial s}(0)s+o_n(s^2)\right)\|\mathbf{u}^n_h\|_h^2+\frac{1}{N}\left(1+2\frac{\partial t_m^n}{\partial s}(0)s+o_n(s^2)\right)\left(\|\mathbf{u}^n_m\|^2_{m}+|s|^{\frac{2}{p}}\|u^n_{m_{\widehat{i}}}\|^2_{m_{\overline{i}}}\right)\\
& = \frac{1}{N}\sum_{h=1}^{m}\|\mathbf{u}^n_h\|_h^2+\frac{2}{N}\sum_{h=1}^{m}\frac{\partial t_h^n}{\partial s}(0)\|\mathbf{u}^n_h\|_h^2s+\frac{1}{N}\|u^n_{m_{\widehat{i}}}\|^2_{m_{\overline{i}}}|s|^{\frac{2}{p}}+o_n(s^2)\\
& = c^n+s \left(-\frac{2L}{N}\|\mathbf{u}^n_m\|_m^2+\frac{2}{N}\left(\frac{\partial t_m^n}{\partial s}(0)+L\right) \|\mathbf{u}^n_m\|_m^2+\frac{2}{N}\sum_{h=1}^{m-1}\frac{\partial t_h^n}{\partial s}(0)\|\mathbf{u}^n_h\|_h^2+\frac{1}{N}\|u^n_{m_{\widehat{i}}}\|^2_{m_{\overline{i}}}|s|^{\frac{2}{p}-2}s+o_n(s)\right).
\end{align*}
Therefore, by fixing first  an $n$ sufficiently large so that
\[
-\frac{2L}{N}\|\mathbf{u}^n_m\|_m^2+\frac{2}{N}\left(\frac{\partial t_m^n}{\partial s}(0)+L\right) \|\mathbf{u}^n_m\|_m^2+\frac{2}{N}\sum_{h=1}^{m-1}\frac{\partial t_h^n}{\partial s}(0)\|\mathbf{u}^n_h\|_h^2<0;
\]
since $1<p<2$, we can then take $s\sim 0$ in such a way that $c^n<c^n$, a contradiction. This finishes the proof.
\end{proof}

\begin{proof}[\bf Conclusion of the proof of Theorem \ref{thm:leps}-(4)]
Assume, in view of a contradiction, the existence of $\beta^n_{ij}\rightarrow -\infty$  for every $(i,j)\in \mathcal{K}_2$ as $n\rightarrow \infty$  such that  $\mathbf{u}_n=(\mathbf{u}_1^n,\ldots, \mathbf{u}_m^n)$ has a zero component.  Exactly as in the proof of (3), we assume that $|I_m|>1$ and that there exists $m_{\overline{i}}\in I_m$  such that $u^n_{m_{\overline i}}\equiv 0$ for every $n$, and $m_{\widehat{i}}\in I_m$ smaller than $m_{\overline i}$, such that $|u^n_{m_{\widehat{i}}}|^2_{2p}\geq C_1/ d$.
We deduce from Theorem \ref{Phase Separation} that
\begin{equation*}\label{Positive-3}
u_i^n\rightarrow u_i^\infty \text{ strongly in } H^1_0(\Omega) \text{ as } n\rightarrow \infty, ~~i\in I_h, h=1,\ldots,m,
\end{equation*}
and
\begin{equation*}\label{Positive-4}
\lim_{n\rightarrow\infty}\int_{\Omega}\beta_{ij}^n|u_i^n|^p |u_j^n|^p= 0 \text{ for every } (i,j)\in\mathcal{K}_2.
\end{equation*}
Define
\begin{equation*}
\widehat{\mathbf{u}}_m^n(s):=(u^n_{a_{m-1}+1},\cdots,u^n_{m_{\widehat{i}}},\cdots,|s|^{\frac{1}{p}-1}su^n_{m_{\widehat{i}}},\cdots,u^n_{m_j})
~~\forall s\in \R.
\end{equation*}

Following the similar arguments in the proof of (3), we see that there exist $C^1$ functions $t_1^n(s),\ldots,t_m^n(s)$ such that
$(t_1^n(s)\mathbf{u}_1^n,\cdots,t_{m-1}^n(s)\mathbf{u}_{m-1}^n,t_m^n(s)\widehat{\mathbf{u}}_m^n)\in \mathcal{N}^n$ for $s\in [0, \delta_n)$, where $\delta_n>0$.
For $h=1,\ldots, m-1$ by the assumption in Theorem \ref{thm:leps}-(4) we have,
\begin{equation*}
\left|A^n_{hm_2}\right|\leq \left|\sum_{i\in I_h}\int_{\Omega}\beta_{im_{\overline{i}}}^n|u_i^n|^p |u^n_{m_{\widehat{i}}}|^p\right|\leq M\sum_{i\in I_h}\left|\int_{\Omega}\beta_{im_{\widehat{i}}}^n|u_i^n|^p |u^n_{m_{\widehat{i}}}|^p\right|.
\end{equation*}
Hence, similarly to \eqref{Positive-20-4} and \eqref{Positive-20-2-4} we see that
\begin{equation*}\label{Positive-20}
\lim_{n\rightarrow \infty}\frac{\partial t_h^n}{\partial s}(0)=0, \text{ for every } h=1,\ldots,m-1
~\text{ and }
\lim_{n\rightarrow \infty}\frac{\partial t_m^n}{\partial s}(0)<0,
\end{equation*}
and we reach a contradiction just like in the previous proof.
\end{proof}

\section{The case of two groups}\label{sec8}
This section is devoted to the proof of Theorem \ref{existence-positive-two groups2}.
Now, we study the case of $d=3$ equations
\begin{equation*}\label{two groups}
-\Delta u_{i}+\lambda_{i}u_{i}=u_{i}^{p-1}\sum_{j = 1}^{3}\beta_{ij}  u^p_{j}   ~\text{ in } \Omega,\ i=1,2,3,
\end{equation*}
and $m=2$ groups. To fix ideas, we consider the partition $I_1=\{1,2\}$, $I_2=\{3\}$.  Associated with this partition, we set $c$, $J$, $\mathcal{N}$ as defined in the introduction, see \eqref{Functional-1}, \eqref{Manifold-1}, \eqref{Minimizer-1}. Based on Theorem \ref{Theorem-1}, we know that $c$ is achieved by a nonnegative solution $(\widehat{u}_1,\widehat{u}_2,\widehat{u}_3) \in \mathcal{N}_{12,3}$, and $(\widehat{u}_1,\widehat{u}_2)\neq \mathbf{0}, \widehat{u}_3\neq 0$. In order to prove that $(\widehat{u}_1,\widehat{u}_2,\widehat{u}_3)$ is a least energy positive solution, our strategy is to show that
\begin{equation}\label{energy inequality-4-1}
c< \min\{d_{13}, d_{23}\},
\end{equation}
where $d_{i3}$ ($i=1,2$) is defined as the least energy positive level of
\begin{equation}\label{two equations}
-\Delta u +\lambda_i u= \beta_{ii} u^{2p-1}+\beta_{i3} v^pu^{p-1},\quad  -\Delta v +\lambda_3 v= \beta_{33} v^{2p-1}+\beta_{i3} u^pv^{p-1} ~\text{ in } \Omega.
\end{equation}
Observe that if \eqref{energy inequality-4-1} is true, then we can get that both $\widehat{u}_1\neq 0$ and $\widehat{u}_2\neq 0$, and so $(\widehat{u}_1,\widehat{u}_2,\widehat{u}_3)$ is a least energy positive solution.

By Theorem \ref{thm:leps}-(2) we see that $d_{i3}$ is attained for $i=1,2$. Let us denote by $(u_1,u_3)$ a solution associated with $d_{13}$, and by $(\tilde u_2,\tilde u_3)$ a critical point of $d_{23}$. By elliptic regularity theory, we see that all these functions are of class $C^2$ in $\overline \Omega$. Set $M_i, \widetilde M_i$ and $m_i,\widetilde m_i$, depending only on $\lambda_i,\beta_{ii}$, such that
\begin{equation}\label{Maximum-15}
m_i \leq  \int_\Omega |u_i|^{2p} \leq M_i,\qquad \widetilde m_i\leq \int_\Omega |\tilde u_i|^{2p}\leq \widetilde M_i
\end{equation}
(this is a consequence of the uniform bounds in Lemma \ref{Bounded} and Theorem \ref{Energy Estimates} - which are independent from $\beta_{13},\beta_{23}<0$, together with the Sobolev embedding $H^1_0\hookrightarrow L^{2p}(\Omega)$.)

By Theorem \ref{Energy Estimation-1,2,m-1} we have
\begin{equation}\label{Energy estimate-4}
d_{13}<\min\left\{d_1+ l_3-\delta, ~d_3+l_1-\delta \right\},\quad d_{23}<\min\left\{d_2+ l_3-\delta, ~d_3+l_2-\delta \right\},
\end{equation}
where $\delta$ is only dependent on $\lambda_i,\beta_{ii}$, $l_i$ is defined in \eqref{eq:Ground State-2} with $|I_h|=1$, and $d_i$ is the ground state level of
\begin{equation*}
-\Delta u +\lambda_i u= \beta_{ii} u^{2p-1}, ~u\in H^1_0(\Omega), ~i=1,2,3.
\end{equation*}

\begin{lemma}\label{uniform bounded-2}
Assume that $(u_1,u_3)$ is a least energy positive solution of \eqref{two equations} with $i=1$, and $(\tilde u_2,\tilde u_3)$ is a least energy positive solution of \eqref{two equations} with $i=2$. Then there exist $C_{3},C_4, C_5, C_6>0$ such that for any $\beta_{13},\beta_{23}\in [-\sigma_1, -\sigma_0]$, we have
\begin{equation*}
\int_{\Omega}|\beta_{13}|u_1^pu_3^p\geq C_3, \quad
~\frac{\int_{\Omega}|\beta_{13}|u_1^pu_3^p}{\int_{\Omega}\beta_{11}u_1^{2p}}\geq C_4\quad \text{ and } \quad \int_{\Omega}|\beta_{23}|u_2^pu_3^p\geq C_5,\quad \frac{\int_{\Omega}|\beta_{23}|u_2^pu_3^p}{\int_{\Omega}\beta_{22}u_2^{2p}}\geq C_6.
\end{equation*}
where $C_3,C_4,C_5,C_6$ are only dependent on $\lambda_i,\beta_{ii},\sigma_0,\sigma_1$.
\end{lemma}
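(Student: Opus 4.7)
The plan is to prove $\int_\Omega |\beta_{13}| u_1^p u_3^p \geq C_3$ by contradiction; the remaining three bounds will then follow routinely. Suppose there is a sequence $\beta_{13}^n \in [-\sigma_1, -\sigma_0]$ with associated least energy positive solutions $(u_1^n, u_3^n)$ of \eqref{two equations} for $i=1$, such that $|\beta_{13}^n| \int_\Omega (u_1^n)^p (u_3^n)^p \to 0$; since $|\beta_{13}^n| \geq \sigma_0$, this is equivalent to $\int_\Omega (u_1^n)^p (u_3^n)^p \to 0$. Theorem \ref{Energy Estimates} applied to the $m=2$ decomposition $I_1=\{1\}, I_2=\{3\}$ yields a uniform upper bound on $d_{13}^n$ depending only on $\lambda_i,\beta_{ii}$; hence $(u_1^n, u_3^n)$ is bounded in $H_0^1(\Omega)^2$, and passing to a subsequence $u_i^n \rightharpoonup \bar u_i$ weakly in $H_0^1(\Omega)$ and a.e., with $\bar u_1 \bar u_3 \equiv 0$ by Fatou.

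To identify $\bar u_1$ on $\{\bar u_1 > 0\}$, I test the equation for $u_1^n$ with $\varphi \in C_c^\infty(\{\bar u_1 > 0\})$. Young's inequality yields
\begin{equation*}
\left|\beta_{13}^n \int (u_3^n)^p (u_1^n)^{p-1} \varphi\right| \leq \frac{p-1}{p} |\beta_{13}^n| \int (u_1^n)^p (u_3^n)^p + \frac{|\varphi|_\infty^p}{p}\, \sigma_1 \int_{\operatorname{supp}\varphi} (u_3^n)^p,
\end{equation*}
and both terms vanish: the first by assumption, the second by the compact embedding $H_0^1 \hookrightarrow L^q$ for $q<2p$ together with $\bar u_3 \equiv 0$ on $\operatorname{supp}\varphi$. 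Hence $\bar u_1$ weakly solves the single Br\'ezis--Nirenberg equation $-\Delta u + \lambda_1 u = \beta_{11} u^{2p-1}$ on $\{\bar u_1 > 0\}$. Extending $\bar u_1$ by zero and using domain monotonicity of the single BN ground-state level, if $\bar u_1 \not\equiv 0$ then $\|\bar u_1\|_1^2 \geq N d_1$; the analogous statement holds for $\bar u_3$ with $d_3$.

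Three cases must be distinguished. \textbf{Case A} (both $\bar u_i \equiv 0$): Lemma \ref{Bounded} gives $|u_i^n|_{2p}^2 \geq C_1$, so neither component converges to zero strongly. Inserting the vanishing coupling into the Nehari identities and using $\lambda_i \int (u_i^n)^2 \to 0$ produces $\int |\nabla u_i^n|^2 = \beta_{ii} \int (u_i^n)^{2^*} + o(1)$; combining with the critical Sobolev inequality $\int |\nabla u|^2 \geq S(\int |u|^{2^*})^{2/2^*}$ and the identity $N l_i = S^{N/2} \beta_{ii}^{-(N-2)/2}$ gives $\|u_i^n\|_i^2 \geq N l_i + o(1)$, hence $d_{13}^n \geq l_1 + l_3 + o(1)$. \textbf{Cases B, C} (exactly one of $\bar u_i$ nonzero; by symmetry take $\bar u_1 \neq 0$): then $\liminf \|u_1^n\|_1^2 \geq \|\bar u_1\|_1^2 \geq N d_1$ by weak lower semicontinuity and the previous paragraph, while the Case A concentration argument applied to $u_3^n$ yields $\|u_3^n\|_3^2 \geq N l_3 + o(1)$, so $d_{13}^n \geq d_1 + l_3 + o(1)$. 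In every case $d_{13}^n \geq \min\{d_1+l_3,\, d_3+l_1\} + o(1)$, contradicting \eqref{Energy estimate-4} since the $\delta$ there depends only on $\lambda_i, \beta_{ii}$.

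Once the first bound is established, the ratio bound $\int |\beta_{13}| u_1^p u_3^p / (\beta_{11} \int u_1^{2p}) \geq C_4$ is immediate: from the Nehari identity $\beta_{11} \int u_1^{2p} = \|u_1\|_1^2 + |\beta_{13}| \int u_1^p u_3^p$, the $H_0^1$ bound on $u_1$, and \eqref{Maximum-15}, we get a uniform upper bound on $\beta_{11} \int u_1^{2p}$, so the positive lower bound $C_3$ on the numerator yields a positive lower bound on the quotient. The estimates $C_5, C_6$ for $(\tilde u_2, \tilde u_3)$ follow by applying the identical argument to the $i=2$ subsystem. The main technical hurdle is the passage-to-limit for the cross term in the equation, which crucially exploits the $L^1$-vanishing of the coupling density together with the localization on $\{\bar u_1 > 0\}$, where $\bar u_3$ is identically zero.
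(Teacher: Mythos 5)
Your overall strategy (contradiction, uniform energy bounds from Theorem \ref{Energy Estimates}, Fatou to get $\bar u_1\bar u_3\equiv 0$, and case analysis according to which limit components vanish) is the same as the paper's, and Cases A, B, C are handled correctly. However, you are missing the fourth case: \emph{both} $\bar u_1\not\equiv 0$ and $\bar u_3\not\equiv 0$. In that case your argument only yields $d_{13}^n\geq d_1+d_3+o(1)$, which does not contradict \eqref{Energy estimate-4} because $d_1<l_1$ and $d_3<l_3$ strictly (this is Br\'ezis--Nirenberg), so $d_1+d_3<\min\{d_1+l_3,d_3+l_1\}-\delta$ is perfectly possible if $\delta$ is small. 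The paper closes this gap differently: it passes to the limit in the full coupled system (noting $\beta_{13}^n\to\beta_0\in[-\sigma_1,-\sigma_0]$) and shows $(\bar u_1,\bar u_3)$ solves \eqref{two equations} with $\beta_{13}=\beta_0$; then the strong maximum principle gives the dichotomy ``$\bar u_i\equiv 0$ or $\bar u_i>0$ in $\Omega$'', so if both are nontrivial they are both strictly positive, contradicting $\bar u_1\bar u_3\equiv 0$ directly. Your localized identification of the limiting equation is compatible with segregated nontrivial limits, so it cannot produce this contradiction.

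A secondary issue: the step ``Extending $\bar u_1$ by zero and using domain monotonicity \dots $\|\bar u_1\|_1^2\geq Nd_1$'' implicitly requires $\bar u_1\in H^1_0(\{\bar u_1>0\})$, which is not automatic for a nonnegative function in $H^1_0(\Omega)$ vanishing a.e.\ outside an open set (the free boundary may be irregular). The paper's route for this energy bound (as in the proof of Theorem \ref{asymtotic behavior-1}) avoids the issue entirely: test the equation for $u_1^n$ against $\bar u_1$ \emph{over all of $\Omega$}, use $\beta_{13}^n<0$ together with $u_1^n,u_3^n,\bar u_1\geq 0$ to drop the cross term, and pass to the limit to obtain $\|\bar u_1\|_1^2\leq\beta_{11}\int_\Omega\bar u_1^{2p}$; the Nehari argument then gives $\|\bar u_1\|_1^2\geq Nd_1$ without any regularity assumption on $\{\bar u_1>0\}$. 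You should replace your domain-monotonicity shortcut with this testing argument; it is also what makes the paper's treatment of Cases B and C rigorous.
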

\begin{proof}[\bf Proof]
We prove the first two inequalities (which are related with system \eqref{two equations}  with $i=1$: the other two are completely analogous). By contradiction, take a sequence $\beta^n_{13} \in [-\sigma_1, -\sigma_0]$, $\beta^n_{13}\rightarrow \beta_0$ as $n\rightarrow \infty$, and let $(u_n,v_n)$ be a least energy positive solution when $\beta_{13}=\beta^n_{13}$, such that
\begin{equation}\label{Energy estimate-6-2}
\int_{\Omega}|\beta^n_{13}|u_n^pv_n^p\rightarrow 0  \text{ as } n\rightarrow \infty.
\end{equation}
Note that $d^n_{13}= \frac{1}{N}(\|u_n\|^2_1+\|v_n\|^2_3)$ and $l_i=\frac{1}{N}\beta_{ii}^{-\frac{N-2}{2}}\mathcal{S}^{\frac{N}{2}},i=1,3$, where $\mathcal{S}$ is the Sobolev best constant, thus by \eqref{Energy estimate-4} we have
\begin{align}\label{Energy estimate-5}
\lim_{n\rightarrow \infty}d^n_{13} & = \lim_{n\rightarrow \infty}\frac{1}{N}(\|u_n\|^2_1+\|v_n\|^2_3) \leq \min\left\{d_1+ \frac{1}{N}\beta_{33}^{-\frac{N-2}{2}}\mathcal{S}^{\frac{N}{2}}-\delta, ~d_3+\frac{1}{N}\beta_{11}^{-\frac{N-2}{2}}\mathcal{S}^{\frac{N}{2}}-\delta\right\}.
\end{align}
Therefore, $\{(u_n,v_n)\}$ is uniformly bounded in $H^1_0(\Omega)\times H^1_0(\Omega)$. Taking a subsequence, we have
\begin{equation*}
u_n\to u, \ v_n\to v ~\text{weakly in } H^1_0(\Omega), \text{ strongly in }  L^2(\Omega) \text{ and a.e. in }\Omega.
\end{equation*}
Since $0<\sigma_0<|\beta^n_{13}|<\sigma_1$, by \eqref{Energy estimate-6-2} and Fatou's Lemma we know that
\begin{equation}\label{Energy estimate-6-4}
\int_{\Omega}u^pv^p=0.
\end{equation}

Note that $u\geq 0, v\geq 0$ in $\Omega$, and $(u,v)$ is a solution of \eqref{two equations} with $\beta_{13}=\beta_0$. By the maximum principle we know that $u\equiv 0$ or $u>0$ in $\Omega$, and $v\equiv 0$ or $v>0$ in $\Omega$. We claim that $u>0$, $v>0$ in $\Omega$. By contradiction, without loss of generality we assume that $u\equiv 0$.

If both $u\equiv 0$ and $v\equiv 0$, then we see from $\int_\Omega v_n^2\rightarrow 0$ and $\beta^n_{13}<0$ that
\begin{equation*}
\int_\Omega |\nabla v_n|^2+\lambda_3v_n^2\leq \int_\Omega \beta_{33}|v_n|^{2p}\leq\beta_{33} \mathcal{S}^{-p}\left(\int_\Omega|\nabla v_n|^2\right)^p.
\end{equation*}
Similarly to the proof of Theorem \ref{asymtotic behavior-1} we see that
\begin{equation}\label{Energy estimate-6}
\lim_{n\rightarrow\infty}\int_\Omega |\nabla v_n|^2\geq \beta_{33}^{-\frac{N-2}{2}}\mathcal{S}^{\frac{N}{2}},
\end{equation}
and
\begin{equation}\label{Energy estimate-7}
\lim_{n\rightarrow\infty}\int_\Omega |\nabla u_n|^2\geq \beta_{11}^{-\frac{N-2}{2}}\mathcal{S}^{\frac{N}{2}}.
\end{equation}
It follows from \eqref{Energy estimate-6} and \eqref{Energy estimate-7} that
\begin{equation*}
\lim_{n\rightarrow\infty}d^n_{13}\geq \frac{1}{N}\left(\beta_{33}^{-\frac{N-2}{2}}\mathcal{S}^{\frac{N}{2}}+\beta_{11}^{-\frac{N-2}{2}}\mathcal{S}^{\frac{N}{2}}\right),
\end{equation*}
it is a contradiction with \eqref{Energy estimate-5} due to $d_i\leq \frac{1}{N}\beta_{ii}^{-\frac{N-2}{2}}\mathcal{S}^{\frac{N}{2}},i=1,3.$

If $u\equiv 0$, $v>0$ in $\Omega$, then we know that \eqref{Energy estimate-7} holds. Similarly to the proof of Theorem \ref{asymtotic behavior-1} we know that
\begin{equation}\label{E-e-8}
\int_\Omega (|\nabla v|^2+\lambda_3v^2)\geq Nd_3.
\end{equation}
Combining this with \eqref{Energy estimate-5} and \eqref{Energy estimate-7} we have
\begin{align*}
d_3+\frac{1}{N}\beta_{11}^{-\frac{N-2}{2}}\mathcal{S}^{\frac{N}{2}}-\delta & \geq \lim_{n\rightarrow\infty}d^n_{13}=\lim_{n\rightarrow\infty}\frac{1}{N}(\|u_n\|^2_1+\|v_n\|^2_3) \\
   & =\frac{1}{N}\int_\Omega (|\nabla v|^2+\lambda_3v^2)+\lim_{n\rightarrow\infty}\frac{1}{N}\int_\Omega |\nabla u_n|^2+\lim_{n\rightarrow\infty}\frac{1}{N}\int_\Omega |\nabla (v_n-v)|^2\\
   &\geq d_3+\frac{1}{N}\beta_{11}^{-\frac{N-2}{2}}\mathcal{S}^{\frac{N}{2}},
\end{align*}
which is a contradiction. It follows that $u>0$, $v>0$ in $\Omega$, and so
\begin{equation*}
\int_\Omega u^pv^p>0,
\end{equation*}
which contradicts with \eqref{Energy estimate-6-4}. Therefore, there exists a constant $C_3$ such that
\begin{equation*}
\int_{\Omega}|\beta_{13}|u_1^pu_3^p\geq C_3.
\end{equation*}
On the other hand, $\int_{\Omega}\beta_{11}u_1^{2p}\leq C$. Hence,
\begin{equation*}
\frac{\int_{\Omega}|\beta_{13}|u_1^pu_3^p}{\int_{\Omega}\beta_{11}u_1^{2p}}\geq C_4. \qedhere
\end{equation*}
\end{proof}

Set
\begin{equation*}\label{Maxi-5}
\widetilde{\beta}_1:=\sigma_1 \frac{p-1+C_4}{pC_4}\left(\frac{M_3}{m_1}\right)^{\frac{1}{2}},\quad
\widetilde{\beta}_2:=\sigma_1\frac{p-1+C_6}{pC_6}\left(\frac{\widetilde{M}_3}{\widetilde{m}_2}\right)^{\frac{1}{2}},
\end{equation*}
where $C_4$ and $C_6$ are defined in Lemma \ref{uniform bounded-2}, $M_i,\widetilde M_i$ are defined in \eqref{Maximum-15},
Set
\begin{equation}\label{Constant9}
\widehat{\beta}:=\max\{\widetilde{\beta}_1, \widetilde{\beta}_2\}.
\end{equation}

\begin{lemma}\label{Energy estimate9}
If $\beta_{12}>\widehat{\beta}$ and $\beta_{13},\beta_{23}\in [-\sigma_1,-\sigma_0]$, then
\begin{equation*}\label{energy inequality-4-2}
c< \min\{d_{13}, d_{23}\}.
\end{equation*}
\end{lemma}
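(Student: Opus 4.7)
The plan is to prove $c<\min\{d_{13},d_{23}\}$ by exhibiting, for each inequality, a competitor in $\mathcal{N}$ of strictly lower energy. By the symmetry of the argument, I focus on $c<d_{13}$; the bound $c<d_{23}$ is obtained analogously, interchanging $(u_1,u_3)$ with $(\tilde u_2,\tilde u_3)$ and $\widetilde\beta_1$ with $\widetilde\beta_2$.

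Fix the minimizer $(u_1,u_3)$ of $d_{13}$ and consider, for $\epsilon\geq 0$ small, the one-parameter family
$$\mathbf{v}_\epsilon := \bigl(t_1(\epsilon)\,u_1,\ t_1(\epsilon)\epsilon\,u_1,\ t_2(\epsilon)\,u_3\bigr),$$
that is, I insert a scaled copy of $u_1$ as the second component of group $I_1=\{1,2\}$, exploiting the large cooperation coefficient $\beta_{12}$. The two scalar Nehari identities defining $\mathcal{N}$ reduce, at $\epsilon=0$, to the Nehari conditions of the $(1,3)$-subsystem, which are satisfied by $(u_1,u_3)$ with $t_1=t_2=1$. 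Exactly as in the proof of Lemma \ref{Lagrange}, the Jacobian of this system in $(t_1,t_2)$ at the base point is strictly diagonally dominant (thanks to $p>1$), so the implicit function theorem produces $C^1$ functions $t_i(\epsilon)$ with $t_i(0)=1$ and $\mathbf{v}_\epsilon\in\mathcal{N}$ for $\epsilon$ small.

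The key use of $1<p<2$ is that the $\epsilon^{p}$ cross-terms in the Nehari equations (arising from $2\beta_{12}\epsilon^{p}\int u_1^{2p}$ and $\beta_{23}\epsilon^{p}\int u_1^{p}u_3^{p}$) dominate the $\epsilon^{2}$ contribution from $\|\epsilon u_1\|_2^2$. Writing $t_i(\epsilon)=1+a_i\epsilon^{p}+o(\epsilon^{p})$ and matching the $\epsilon^{p}$ coefficients yields an invertible $2\times 2$ linear system for $(a_1,a_2)$. Combining $J(\mathbf{v}_\epsilon)=\frac{1}{N}\|\mathbf{v}_\epsilon\|^2$ (valid on $\mathcal{N}$) with $d_{13}=\frac{1}{N}(\|u_1\|_1^2+\|u_3\|_3^2)$ gives
$$J(\mathbf{v}_\epsilon)-d_{13}=\frac{2\epsilon^{p}}{N}\bigl(a_1\|u_1\|_1^2+a_2\|u_3\|_3^2\bigr)+o(\epsilon^{p}).$$

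The main obstacle is to verify that $a_1\|u_1\|_1^2+a_2\|u_3\|_3^2<0$ precisely when $\beta_{12}>\widetilde\beta_1$. Solving the system for $(a_1,a_2)$ by Cramer's rule and expressing the answer through the quantities $A:=\int\beta_{11}u_1^{2p}$, $B:=\int\beta_{13}u_1^{p}u_3^{p}$, $C:=\int\beta_{12}u_1^{2p}$, $D:=\int\beta_{23}u_1^{p}u_3^{p}$ and $E:=\int\beta_{33}u_3^{2p}$, the negativity condition reduces to a linear inequality in $C$, hence in $\beta_{12}$. Estimating each ingredient by (i) H\"older $\int u_1^{p}u_3^{p}\leq(\int u_1^{2p})^{1/2}(\int u_3^{2p})^{1/2}$, (ii) the uniform bounds \eqref{Maximum-15} giving $m_1\leq\int u_1^{2p}$ and $\int u_3^{2p}\leq M_3$, and (iii) the ratio bound $|B|/A\geq C_4$ from Lemma \ref{uniform bounded-2}, together with $|D|\leq\sigma_1\int u_1^{p}u_3^{p}$, the threshold collapses to $\beta_{12}>\sigma_1\frac{p-1+C_4}{pC_4}(M_3/m_1)^{1/2}=\widetilde\beta_1$. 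The delicate step is the algebraic simplification of the Cramer expression into this explicit closed form, where the characteristic factor $(p-1+C_4)/(pC_4)$ emerges from balancing the $(2-2p)A$ and $(2-p)B$ diagonal contributions against the off-diagonal $pB$ couplings; the combinatorics is sensitive but purely elementary. Once the sign is settled, $\mathbf{v}_\epsilon\in\mathcal{N}$ and $J(\mathbf{v}_\epsilon)<d_{13}$ for small $\epsilon>0$, so $c\leq J(\mathbf{v}_\epsilon)<d_{13}$, and the analogous construction with $(\tilde u_2,\tilde u_3)$ in place of $(u_1,u_3)$ and $\widetilde\beta_2$ in place of $\widetilde\beta_1$ yields $c<d_{23}$, completing the proof.
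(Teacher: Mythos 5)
Your construction is the same as the paper's: perturb the semitrivial critical point $(u_1,0,u_3)$ to a genuine element of $\mathcal{N}$ and show the energy strictly decreases. The main difference lies in how one picks the test function for the second component. The paper's Step 3 is a contradiction argument: if \emph{no} nonnegative $\omega$ makes $m'_\omega(0)<0$, then the variational inequality holds for all such $\omega$ and upgrades to a pointwise bound $u_3\geq C u_1$ in $\Omega$, which after integration contradicts $\beta_{12}>\widetilde\beta_1$. You instead fix $\omega=u_1$ outright and verify the sign condition by Cauchy--Schwarz (your inequality $\int u_1^{p}u_3^{p}\leq(\int u_1^{2p})^{1/2}(\int u_3^{2p})^{1/2}$) together with the bounds of Lemma \ref{uniform bounded-2} and \eqref{Maximum-15}. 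That is a legitimate and arguably cleaner route: one can check that with this choice, $m'(0)<0$ is implied precisely by the threshold $\widetilde\beta_1$ as defined in the paper, so nothing is lost and there is no need for a test-function sweep.

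There is, however, a gap in the way you conclude. You assert that the negativity of the combined quantity $a_1\|u_1\|_1^2+a_2\|u_3\|_3^2$ ``reduces to a linear inequality in $\beta_{12}$'' whose ``threshold collapses to $\widetilde\beta_1$'', leaving the algebra unchecked. The clean path is the paper's Step 2: one shows that $m'(0)\leq 0$ already forces $t'(0)<0$, which is an unconditional structural estimate using only $D_{11},D_{33}>|D_{13}|$; then $\beta_{12}>\widetilde\beta_1$ yields $m'(0)<0$ via your H\"older computation, hence \emph{both} derivatives are negative and the energy strictly decreases. If you want to verify the sign of the sum directly (as you propose), you must prove an elementary inequality asserting that the sum's threshold lies below the $m'(0)$-threshold; this is exactly the content disguised in the paper's Step 2, and it is not merely ``sensitive but purely elementary'' bookkeeping — it is the part where $1<p<2$ and the relative sizes of $D_{11},D_{33},|D_{13}|$ enter. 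You also invoke the implicit function theorem on functions of $\epsilon$ whose nonlinear terms behave like $\epsilon^p$, which are only $C^1$ after the substitution $s=\epsilon^p$; this is implicit in your ansatz $t_i(\epsilon)=1+a_i\epsilon^p+o(\epsilon^p)$, but should be said explicitly, as the paper does via its $s^{1/p}$ parametrization.
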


\begin{proof}[\bf Proof]
We just prove that $c<d_{13}$ (which just uses the fact that $\beta_{12}>\widetilde{\beta}_1$), and the other inequality is analogous. Note that $d_{13}$ is attained by $(u_1,u_3)$, and that $(u_1,0,u_3)\in \mathcal{N}$.

{\bf Step 1 }
Given $\omega\in H^1_0(\Omega)$ with $\omega\geq 0$, we claim that there exist $C^1$ functions $t(s)=t_\omega(s),m(s)=m_\omega(s)$ and $\eta_1>0$ such that $(t(s)u_1, t(s)s^{\frac{1}{p}}\omega, m(s)u_3)\in \mathcal{N}$ for $s\in [0, \eta_1)$, and $t(0)=m(0)=1$.
Consider the $C^{1}(\mathbb{R}^3;\mathbb{R})$--functions
\begin{align*}
f(t,m,s):=&t^2\|u_1\|_1^2+t^2|s|^{\frac{2}{p}-1}s\|\omega\|_2^2 - t^{2p}\int_{\Omega}\beta_{11}u_1^{2p}- t^{2p}s^{2}\int_{\Omega}\beta_{22}\omega^{2p}-t^{2p}s\int_{\Omega}2\beta_{12}u_1^{p}\omega^{p}\\
 & -t^pm^p\int_{\Omega} \beta_{13}u_1^{p}u_3^{p}-t^pm^ps\int_{\Omega} \beta_{23}\omega^{p}u_3^{p},\\
g(t,m,s):=&m^2\|u_3\|_3^2-m^{2p}\int_{\Omega}\beta_{33}u_3^{2p}-t^pm^p\int_{\Omega} \beta_{13}u_1^{p}u_3^{p}-t^pm^ps\int_{\Omega} \beta_{23}\omega^{p}u_3^{p}.
\end{align*}
Set $\mathbf{a}_1=(1,1,0)$ and $f(\mathbf{a}_1)=g(\mathbf{a}_1)=0$. Throughout this proof, to simplify notation, we define:
\begin{equation*}
D_{ij}:=\int_{\Omega} \beta_{ij}u_i^{p}u_j^{p}, ~i,j=1,3, \quad D_{12}:=\int_{\Omega} \beta_{12}u_1^{p}\omega^{p}>0,
\quad D_{23}:=\int_{\Omega} \beta_{23}\omega^{p}u_3^{p}<0.
\end{equation*}
 Note that $\|u_1\|_1^2=D_{11}+D_{13}$ and  $\|u_3\|_3^2=D_{33}+D_{13}$. Thus, a direct computation yields that
\begin{align*}
&\frac{\partial f}{\partial t}(\mathbf{a}_1)=(2-2p)D_{11}+(2-p)D_{13}, \qquad\frac{\partial f}{\partial m}(\mathbf{a}_1)= -pD_{13},\\
&\frac{\partial g}{\partial t}(\mathbf{a}_1)=-pD_{13},  \qquad \frac{\partial g}{\partial m}(\mathbf{a}_1)= (2-2p)D_{33}+(2-p)D_{13}.
\end{align*}
Combining those with $D_{11}, D_{33}>|D_{13}|$, we obtain
\begin{equation*}
\mathcal{D}:=\frac{\partial f}{\partial t}(\mathbf{a}_1)\frac{\partial g}{\partial m}(\mathbf{a}_1)-\frac{\partial f}{\partial m}(\mathbf{a}_1)\frac{\partial g}{\partial t}(\mathbf{a}_1)>0.
\end{equation*}
Therefore, by the Implicit Function Theorem, there exist $t(s), m(s)$ of class $C^1$ in $(-\eta_1, \eta_1)$ such that $t(0)=m(0)=1$ and $f(t(s),g(s),s)=g(t(s),m(s),s)=0$. If we restrict to $s\in [0, \eta_1)$, then $(t(s)u_1, t(s)s^{\frac{1}{p}}\omega, m(s)u_3)\in \mathcal{N}$. Next, we compute $t'(0)$ and $m'(0)$: they solve the linear system
\begin{equation*}
 -\frac{\partial f}{\partial t}(\mathbf{a}_1) t'(0)-\frac{\partial f}{\partial m}(\mathbf{a}_1)m'(0)=-2D_{12}-D_{23},\quad -\frac{\partial g}{\partial t}(\mathbf{a}_1) t'(0)-\frac{\partial g}{\partial m}(\mathbf{a}_1)m'(0)=-D_{23},
\end{equation*}
which has a unique solution given by:
\begin{equation*}
t'(0)=\frac{2A}{\mathcal{D}}, \quad m'(0)=\frac{2B}{\mathcal{D}},
\end{equation*}
where
\begin{align}\label{A-define}
 A  &=  |D_{23}| \left( (p-1) D_{33} + |D_{13}|\right)- D_{12} \left( 2(p-1) D_{33}+(2-p)|D_{13}| \right), \\
B&= |D_{23}| \left((p-1) D_{11} + |D_{13}|\right)-pD_{12} |D_{13}|.
\end{align}

\noindent {\bf Step 2} We claim that  $m'(0)\leq 0\implies t'(0)< 0$. Indeed, if $m'(0)\leq 0$ then
\begin{equation}\label{Derivation-2}
|D_{23}|\leq \frac{pD_{12}|D_{13}|}{(p-1)D_{11}+|D_{13}|}.
\end{equation}
Note that $|D_{13}|<D_{11}, D_{33}$.
Combining this with \eqref{A-define} and \eqref{Derivation-2}, we obtain
\begin{align*}
  A& \leq \frac{pD_{12}|D_{13}|}{(p-1)D_{11}+|D_{13}|} \left((p-1)D_{33}+|D_{13}|\right)-D_{12} \left(2(p-1)D_{33}+(2-p)|D_{13}|\right)\\
	&=D_{12}\left((p-1)D_{33}+|D_{13}|\right)\left(\frac{p|D_{13}|}{(p-1)D_{11}+|D_{13}|}-\frac{2(p-1)D_{33}+(2-p)|D_{13}|}{(p-1)D_{33}+|D_{13}|}\right)\\
	&<D_{12}\left( (p-1)D_{33}+|D_{13}|\right) \left(1-\frac{2(p-1)D_{33}+(2-p)|D_{13}|}{(p-1)D_{33}+|D_{13}|}\right)<0
\end{align*}
that is, $t'(0)< 0$.\medbreak

\noindent {\bf Step 3} We claim that there exists $\omega\in H^1_0(\Omega)$ nonnegative such that $m'(0)=m_\omega'(0)< 0$. By contradiction, we assume that, for every $\omega\in H_0^1(\Omega)\setminus\{0\}$ nonnegative,
\begin{equation*}
\int_{\Omega} |\beta_{23}|\omega^{p}u_3^{p}\geq  \frac{p\int_{\Omega} \beta_{12}u_1^{p}\omega^{p}|D_{13}|}{(p-1)D_{11}+|D_{13}|}.
\end{equation*}
Hence,
\begin{equation*}
|\beta_{23}|u_3^{p}\geq \beta_{12}\frac{p|D_{13}|}{(p-1)D_{11}+|D_{13}|}u_1^{p},
\end{equation*}
that is
\begin{equation*}\label{negative-4-2}
u_3> \left(\frac{\beta_{12}}{|\beta_{23}|}\right)^{\frac{1}{p}}\overline{C}u_1 ~\text{ in } \Omega,  ~\text{ where } \overline{C}^p:= \frac{p|D_{13}|}{(p-1)D_{11}+|D_{13}|}=f\left(\frac{|D_{13}|}{D_{11}}\right),
\end{equation*}
for the function $f(x):=\frac{px}{p-1+x}$, which is increasing on $[0, +\infty)$. Then,
by Lemma \ref{uniform bounded-2}, we have
\begin{equation*}
\overline{C}^p\geq \frac{pC_4}{p-1+C_4}, \text{ which yields that }
u_3> \left(\frac{\beta_{12}}{|\beta_{23}|}\frac{pC_4}{p-1+C_4}\right)^{\frac{1}{p}}u_1.
\end{equation*}
Therefore, raising both sides to the power $2p$ and integrating:
\begin{equation*}
m_1\leq \int u_1^{2p}\leq\left(\frac{|\beta_{23}|}{\beta_{12}}\frac{p-1+C_4}{pC_4}\right)^2 \int_\Omega u_3^{2p}  \leq \left(\frac{|\beta_{23}|}{\beta_{12}}\frac{p-1+C_4}{pC_4}\right)^2M_3 \implies \beta_{12}\leq\sigma_1 \frac{p-1+C_4}{pC_4}\left(\frac{M_3}{m_1}\right)^{\frac{1}{2}},
\end{equation*}
which is a contradiction.\medbreak

\noindent {\bf Step 4} We complete the proof. Take $\omega$ as in Step 3. Then by Step 1 we have $t(s)=1 + t'(0)s +o(s^2)$, $m(s)=1 + m'(0)s +o(s^2)$, so
\begin{equation*}\label{energy-4}
t^2(s)=1 + 2t'(0)s +o(s^2), \quad m^2(s)=1 + 2m'(0)s +o(s^2),
\end{equation*}
and $(t(s)u_1, t(s)s^{\frac{1}{p}}\omega, m(s)u_3)\in \mathcal{N}$. Under the assumptions, we deduce from Step 2 and Step 3 that
\begin{equation*}
m'(0)< 0 ~\text{ and } ~t'(0)<0.
\end{equation*}
Combining these with $1<p<2$, we see that
\begin{align*}
c\leq & J_{12,3}(t(s)u_1, t(s)s^{\frac{1}{p}}\omega, m(s)u_3)  = \left(\frac{1}{2}-\frac{1}{2p}\right)\left(t^2(s)\|u_1\|_1^2+t^2s^{\frac{2}{p}}\|\omega\|_2^2+ m^2(s)\|u_3\|_3^2\right) \\
=& \frac{1}{N}\Big[\left(1 + 2t'(0)s +o(s^2)\right)\|u_1\|_1^2 +\left(1 + 2m'(0)s +o(s^2)\right)\|u_3\|_3^2 + \left(1 + 2t'(0)s +o(s^2)\right)s^{\frac{2}{p}}\|\omega\|^2 \Big]\\
=& \frac{1}{N}\left(\|u_1\|_1^2+\|u_3\|_3^2+2t'(0)s\|u_1\|_1^2+2m'(0)s\|u_3\|_3^2+s^{\frac{2}{p}}\|\omega\|^2 +o(s^2)\right)\\
=& d_{13} +\frac{2}{N}t'(0)\|u_1\|_1^2s +\frac{2}{N}m'(0)\|u_3\|_3^2s+\frac{1}{2N}\|\omega\|_2^2s^{\frac{2}{p}} +o(s^2)\\
<& d_{13} \quad \text{ for } s>0 \text{ small enough. } \qedhere
\end{align*}
\end{proof}
\begin{proof}[\bf Proof of Theorem \ref{existence-positive-two groups2}]
By Theorem \ref{Theorem-1}, we know that $c$ is achieved by a nonnegative solution $(\widehat{u}_1,\widehat{u}_2,\widehat{u}_3)$. Moreover, it follows from Lemma \ref{Energy estimate9} that $(\widehat{u}_1,\widehat{u}_2,\widehat{u}_3)$ is a least energy positive solution.
\end{proof}

\bibliographystyle{plain}

\begin{thebibliography}{1}



\bibitem{Ambrosetti 2007}A. Ambrosetti, E. Colorado, \emph{Standing waves of some coupled nonlinear Schr\"{o}dinger equations}. J. Lond. Math. Soc. (2) 75(1) (2007) 67--82.

\bibitem{Atkinson-Brezis-Peletier} F.V. Atkinson, H. Brezis, L.A. Peletier, \emph{ Nodal solutions of elliptic equations with critical Sobolev exponents}. J. Differ. Equ. 85(1), 151--C170 (1990)

\bibitem{Bartsch-Dancer-Wang 2010} T. Bartsch, N. Dancer, Z. Q. Wang, \emph{ A Liouville theorem, a priori bounds, and bifurcating branches of positive solutions for a nonlinear elliptic system.} Calc. Var. Partial Differential Equations. 37 (2010), 345-361.


\bibitem{Brezis 1983}H. Br\'ezis, L. Nirenberg, \emph{Positive solutions of nonlinear elliptic equations involving critical Sobolev exponents}. Comm. Pure Appl. Math. 36 (4) (1983), 437--477.


\bibitem{BSWang 2016} J. Byeon, Y. Sato, Z. Q. Wang, \emph{ Pattern formation via mixed attractive and repulsive interactions for nonlinear Schr\"{o}dinger systems}. J. Math. Pures Appl. 106 (2016), 477--511.

\bibitem{BLWang 2019} J. Byeon, Y. Lee, Z. Q. Wang, \emph{ Formation of radial patterns via mixed attractive and repulsive interactions for Schr\"{o}dinger systems}. SIAM J. Math. Anal. 51 (2019), 1514--1542.

\bibitem{Castro-Clapp 2003} A. Castro and M. Clapp, \emph{The effect of the domain topology on the number of minimal
nodal solutions of an elliptic equation at critical growth in a symmetric domain}. Nonlinearity 16 (2003) 579--590.

\bibitem{Cerami-Solimini-Struwe 1986} G. Cerami, S. Solimini and M. Struwe, \emph{Some existence results for superlinear elliptic boundary value problems involving critical exponents}. J. Funct. Anal. 69 (1986) 289--306.


\bibitem{ChenLin} Z. J. Chen, Z., C. S. Lin, \emph{ Asymptotic behavior of least energy solutions for a critical elliptic system.} Int. Math. Res. Not. 21, 11045--11082 (2015)

\bibitem{Zou 2012}Z. J. Chen, W. M. Zou, \emph{Positive least energy solutions and phase separation for coupled Schr\"{o}dinger equations with critical exponent}. Arch. Ration. Mech. Anal. 205 (2) (2012), 515--551.



\bibitem{Zou 2015} Z. J. Chen, W. M. Zou, \emph{Positive least energy solutions and phase separation for coupled Schr\"{o}dinger equations with critical exponent: higher dimensional case}. Calc. Var. Partial Differential Equations. 52 (1) (2015), 423--467.

\bibitem{Chen-Zou 2014} Z. J. Chen, C. S. Lin,  W. M. Zou, \emph{ Sign-changing solutions and phase separation for an elliptic system with critical exponent}. Comm. Partial Differential Equations. 39 (2014), 1827--1859.

\bibitem{Pistoia 2018-2} M. Clapp, A. Pistoia, \emph{Existence and phase separation of entire solutions to a pure critical competitive elliptic system}. Calc. Var. Partial Differential Equations. 57 (1)  (2018).

\bibitem{ClappPistoia} M. Clapp, A. Pistoia, \emph{Fully nontrivial solutions to elliptic systems with mixed couplings}.  arXiv:2106.01637.

\bibitem{ClappSzulkin} M. Clapp, A. Szulkin, \emph{A simple variational approach to weakly coupled competitive elliptic systems.}  NoDEA 26 (2019), no. 4, Paper No. 26, 21 pp.

\bibitem{Correia} S. Correia, \emph{Characterization of ground-states for a system of $M$ coupled semilinear Schr\"odinger equations and applications.} J. Differential Equations. 260, no. 4, 3302--3326 (2016).

\bibitem{Clapp-Weth 2005} M. Clapp and T. Weth, \emph{ Multiple solutions for the Brezis-Nirenberg problem}. Adv.
Differential Equations. 10 (2005) 463--480.


\bibitem{Correia2} S. Correia, \emph{Ground-states for systems of $M$ coupled semilinear Schr\"odinger equations with attraction-repulsion effects: characterization and perturbation results.} Nonlinear Anal. 140, 112--129, (2016).

\bibitem{Tavares 2016}S. Correia, F. Oliveira, H. Tavares, \emph{Semitrivial vs. fully nontrivial ground states in cooperative cubic Schr\"{o}dinger systems with $d\geq 3$ equations}. J. Funct. Anal. 271 (2016) 2247--2273.

\bibitem{DovettaPistoia} S. Dovetta, A. Pistoia, \emph{Solutions to a cubic Schr\"odinger system with mixed attractive and repulsive forces in a critical regime}. arXiv:2104.14916.


\bibitem{Zou 2018} Y.-X. Guo, S. P. Luo, W. M. Zou, \emph{The existence, uniqueness and nonexistence of the ground state to the N-coupled Schr\"{o}dinger systems in $\mathbb{R}^{n} (N\leq 4)$}. Nonlinearity. 31 (2018), 314--339.


\bibitem{Yang 2018} Q.-H. He, J. Yang, \emph{ Quantitative properties of ground-states to an M-coupled system with critical exponent in $\mathbb{R}^{N}$}. Sci. China Math. 61 (2018), 709--726.

\bibitem{He-He-Zhang 2020} T. S. He, L. He, M. Zhang, \emph{ The Br\'ezis-Nirenberg type problem for the p-Laplacian: infinitely many sign-changing solutions}. Calc. Var. Partial Differential Equations 59 (2020).

\bibitem{He-Zhang-Wu-Liang 2018} T. S. He, C. L. Zhang, D. Q. Wu, K. H. Liang, \emph{ An existence result for sign-changing solutions of the Br\'ezis-Nirenberg problem}. Appl. Math. Lett. 84 (2018), 90--95.


\bibitem{Wei 2005} T.-C. Lin, J. Wei, \emph{ Ground state of $N$ coupled nonlinear Schr\"{o}dinger equations in $\mathbb{R}^{n}$, $n\leq 3$}. Comm. Math. Phys. 255 (3) (2005), 629--653.

\bibitem{MaiaMontefuscoPellacci} L. Maia, E Montefusco, B. Pellacci, \emph{Positive solutions for a weakly coupled nonlinear Schr\"odinger system.} J. Differential Equations. 229 , no. 2, 743--767. (2006)

\bibitem{Mandel 2015} R. Mandel, \emph{Minimal energy solutions for cooperative nonlinear Schr\"{o}dinger systems.} NoDEA Nonlinear Differential Equations Appl. 22 (2) (2015), 239--262



\bibitem{NTTG 2010} B. Noris, H. Tavares, S. Terracini, G. Verzini,  \emph{ Uniform H\"{o}lder bounds for nonlinear Schr\"{o}dinger systems with strong competition}. Commun. Pure Appl. Math. 63, (2010), 267--302.

\bibitem{TO 2016} F. Oliveira, H. Tavares, \emph{ Ground states for a nonlinear Schr\"{o}dinger system with sublinear coupling terms}. Adv. Nonlinear Stud. 16(2) (2016) 381--387.

 \bibitem{Peng-Wang 2016} S. J. Peng, Y. F. Peng, Z. Q. Wang, \emph{ On elliptic systems with Sobolev critical growth}. Calc. Var. Partial Differential Equations. 55 (2016).

 \bibitem{Peng-Wang 2019} S. J. Peng, Q. F. Wang, Z. Q. Wang, \emph{On coupled nonlinear Schr\"odinger systems with mixed couplings.} Trans. Amer. Math. Soc. 371 , no. 11, 7559--7583. (2019)

\bibitem{Pistoia 2018-1} A. Pistoia, N. Soave, \emph{On Coron's problem for weakly coupled elliptic systems}. Proc. Lond. Math. Soc. 116 (1) (2018), 33--67.

\bibitem{Tavares 2017}A. Pistoia, H. Tavares, \emph{Spiked solutions for Schr\"{o}dinger systems with Sobolev critical exponent: the cases of competitive and weakly cooperative interactions}. J. Fixed Point Theory Appl. 19 (1) (2017), 407--446.

\bibitem{Tavares 2019} A. Pistoia, N. Soave, H. Tavares, \emph{A fountain of positive Bubbles on a Coron$'$s Problem for a Competitive Weakly Coupled Gradient System}.  J. Math. Pures Appl. 135 (9) (2020), 159--198.

\bibitem{Roselli-Willem 2009} P. Roselli and M. Willem, \emph{ Least energy nodal solutions of the Brezis-Nirenberg problem in dimension $N = 5$}. Comm. Contemp. Math. 11 (2009), 59--69.

\bibitem{Sato-Wang 2015} Y. Sato, Z.-Q. Wang, \emph{Least energy solutions for nonlinear Schr$\ddot{o}$dinger systems with mixed attractive and repulsive couplings}. Adv. Nonlinear Stud. 15 (1) (2015), 1--22.

\bibitem{Schechter-Zou} M.Schechter, W.M. Zou, \emph{ On the Brezis-Nirenberg problem}. Arch. Ration. Mech. Anal. 197 (2010), 337--356.

\bibitem{Sirakov 2007} B. Sirakov, \emph{Least energy solitary waves for a system of nonlinear Schr\"{o}dinger equations in $\mathbb{R}^{n}$}. Comm. Math. Phys. 271 (1) (2007), 199--221.

\bibitem{Soave 2015} N. Soave, \emph{On existence and phase separation of solitary waves for nonlinear Schr\"{o}dinger systems modelling simultaneous cooperation and competition}. Calc. Var. Partial Differential Equations. 53 (3) (2015), 689--718.

\bibitem{Soave-Tavares 2016} N. Soave, H. Tavares, \emph{New existence and symmetry results for least energy positive solutions of Schr\"{o}dinger systems with mixed competition and cooperation terms}. J. Differential. Equations. 261 (1) (2016), 505--537.

\bibitem{STTZ 2016} N. Soave, H. Tavares, S. Terracini, A. Zilio, \emph{H\"{o}lder bounds and regularity of emerging free boundaries for strongly competing Schr\"{o}dinger equations with nontrivial grouping}. Nonlinear Anal. 138 (2016), 388--427.

\bibitem{TS 2019} H. Tavares, S. You, \emph{Existence of least energy positive solutions to Schr\"{o}dinger systems with mixed competition and cooperation terms: the critical case}. Calc. Var. Partial Differential Equations. 59, (2020).

\bibitem{Terracini-Verzini 2009} S. Terracini, G. Verzini, \emph{ Multipulse phases in k-mixtures of Bose-Einstein condensates}. Arch. Ration. Mech. Anal. 194 (2009), 717--741.

\bibitem{Timmermans 1998}E. Timmermans, \emph{Phase separation of Bose-Einstein condensates}. Phys. Rev. Lett. 81(26) (1998), 5718-5721.
\bibitem{WT 2008} J. C. Wei, T. Weth, \emph{Radial solutions and phase separation in a system of two coupled Schr\"{o}dinger equations}. Arch. Ration. Mech. Anal. 190, (2008), 83--106.

\bibitem{WT 2008-1} J. C. Wei, T. Weth, \emph{Asymptotic behaviour of solutions of planar elliptic systems with strong competition}. Nonlinearity. 21, (2008), 305--317.

\bibitem{Wei-Wu 2020} J. C. Wei, Y. Z. Wu, \emph{Ground states of Nonlinear Schr\"{o}dinger System with Mixed Couplings}. J. Math. Pures Appl. 141, (2020), 50--88.

\bibitem{Willem 1996}M. Willem, \emph{Minimax theorems}, Progress in Nonlinear Differential Equations and their Applications, 24, Birkh\"auser Boston, MA, 1996.


\bibitem{Wu Y-Z 2017} Y. Z. Wu,  \emph{On a $K$-component elliptic system with the Sobolev critical exponent in high dimensions: the repulsive case}. Calc. Var. Partial Differential Equations. 56, 2017.

\bibitem{Yin-Zou} X. Yin, W.~M. Zou, \emph{ Positive least energy solutions for $k$-coupled Schr\"{o}dinger system with critical exponent: the higher dimension and cooperative case}. arXiv:2105.10630.
\end{thebibliography}
\end{document}